\DeclareMathOperator{\Cone}{Cone}
\DeclareMathOperator{\SO}{SO} \DeclareMathOperator{\Sp}{Sp}
\DeclareMathOperator{\dc}{dc} \DeclareMathOperator{\Mat}{Mat}
\DeclareMathOperator{\diag}{diag} \DeclareMathOperator{\Pe}{Pe}
\DeclareMathOperator{\conv}{conv}\DeclareMathOperator{\relint}{relint}
\DeclareMathOperator{\imm}{Im} \DeclareMathOperator{\HRe}{Re}
\DeclareMathOperator{\Dic}{Dic} \DeclareMathOperator{\coord}{coord}
\DeclareMathOperator{\Fl}{Fl} \DeclareMathOperator{\GL}{GL}
\newcommand{\jump}[1]{\ensuremath \raisebox{1pt}{$#1$}}
\newcommand{\sit}[1]{\ensuremath \raisebox{-1pt}{$#1$}}
\newcommand{\wt}[1]{\widetilde{#1}}
\newcommand{\Zo}{\mathbb{Z}}
\newcommand{\Ro}{\mathbb{R}}
\newcommand{\Co}{\mathbb{C}}
\newcommand{\Ho}{\mathbb{H}}
\newcommand{\Zt}{\Zo_2}
\newcommand{\HIm}{\overrightarrow{\imm}}
\newcommand{\htimes}{\widehat{\times}}
\newcommand{\eqd}{\stackrel{\text{\tiny def}}{=}}
\newcommand{\dd}{\partial}
\newcommand{\RP}{\mathbb{R}P}
\newcounter{stmcounter}[section]
\numberwithin{equation}{section}
\theoremstyle{plain}
\newtheorem{cor}[stmcounter]{Corollary}
\newtheorem{thm}[stmcounter]{Theorem}
\newtheorem{prop}[stmcounter]{Proposition}
\newtheorem{lem}[stmcounter]{Lemma}
\newtheorem{clai}[stmcounter]{Claim}
\theoremstyle{definition}
\newtheorem{defin}[stmcounter]{Definition}
\theoremstyle{remark}
\newtheorem{rem}[stmcounter]{Remark}
\newtheorem{con}[stmcounter]{Construction}
\begin{document}

\title{Topology of misorientation spaces}

\author{Anton Ayzenberg}
\address{Faculty of computer science, Higher School of Economics}
\email{ayzenberga@gmail.com}

\author{Dmitry Gugnin}
\address{Faculty of mechanics and mathematics, Lomonosov Moscow State University}
\email{dmitry-gugnin@yandex.ru}

\date{\today}

\subjclass[2010]{Primary 57M60, 20H15, 57R18, 57S17, 82D25; Secondary 57M12, 57S25, 57R60, 13A50, 51F15}

\keywords{misorientation space, mathematical crystallography, point crystallography group, finite group action, orbit space, low-dimensional topology, elliptic manifolds, invariant theory}

\begin{abstract}
Let $G_1$ and $G_2$ be discrete subgroups of $\SO(3)$. The double quotients of the form $X(G_1,G_2)=\sit{G_1}\backslash \jump{\SO(3)}/\sit{G_2}$ were introduced in material science under the name misorientation spaces. In this paper we review several known results that allow to study topology of misorientation spaces. Neglecting the orbifold structure, all misorientation spaces are closed orientable topological 3-manifolds with finite fundamental groups. In case when $G_1,G_2$ are crystallography groups, we compute the fundamental groups $\pi_1(X(G_1,G_2))$, and apply Thurston's elliptization conjecture to describe these spaces. Many misorientation spaces are homeomorphic to $S^3$ by Poincar\'{e} conjecture. The sphericity in these examples is related to the theorem of Mikhailova--Lange, which constitutes a certain real analogue of Chevalley--Shephard--Todd theorem. We explicitly describe topological types of several misorientations spaces avoiding the reference to Poincar\'{e} conjecture. Classification of misorientation spaces allows to introduce new $n$-valued group structures on $S^3$ and $\RP^3$. Finally, we outline the connection of the particular misorientation space $X(D_2,D_2)$ to integrable dynamical systems and toric topology.
\end{abstract}
\maketitle

\section{Introduction}\label{secIntro}
Let $G_1,G_2\subset \SO(3)$ be two finite subgroups in the group of special orthogonal transformations of $\Ro^3$. Consider the left action of the product group $G_1\times G_2$ on the manifold $\SO(3)$ given by $(g_1,g_2)A=g_1\cdot A\cdot g_2^{-1}$.

\begin{defin}\label{definMisorSpace}
The topological orbit space
\begin{equation}\label{eqMisorientSpaceDefin}
X(G_1,G_2)\eqd\SO(3)/(G_1\times G_2)=\sit{G_1}\backslash \jump{\SO(3)}/\sit{G_2},
\end{equation}
is called \emph{the misorientation space} of the pair $G_1$, $G_2$.
\end{defin}

Misorientation spaces appeared in material science in connection with the study of polycrystal materials. Assume that a sample of a mineral $1$ with the crystal lattice $N_1\subset \Ro^3$ contains included fractions of a mineral $2$ (probably the same) with the crystal lattice $N_2\subset \Ro^3$. The lattices of the fractions of mineral 2 may be displaced relative to the lattice of mineral 1. Roughly, a displacement is an orthogonal matrix (we neglect affine translations). However two displacements cannot be distinguished if they differ either by a (origin preserving) symmetry $g_1\in G_1$ of $N_1$, or by a symmetry $g_2\in G_2$ of $N_2$. Hence a displacement, also called a \emph{misorientation}, should be an element of the double quotient $X(G_1,G_2)=\sit{G_1}\backslash \jump{\SO(3)}/\sit{G_2}$, which motivates Definition \ref{definMisorSpace} above.

The topological structure of $X(G_1,G_2)$ is important in the applications. In a polycrystal material, there may be many fractions, misoriented differently, which gives rise to the distribution of misorientations. The distribution $\mu$ is a measure on $X(G_1,G_2)$, such that, for $A\subseteq X(G_1,G_2)$, its measure $\mu(A)$ equals the physical volume (or its percentage) of all fractions of the sample whose misorientations belong to $A$. The diagram of a misorientation distribution may provide an information about the physical properties of the sample. In order to visualize such diagram, its representation in $\Ro^3$ should be specified. Ideally, we want such diagram to admit a continuous embedding in $\Ro^3$. In this case there would exist exact and correct way to visualize the misorientations' distribution diagram. Unfortunately, such embedding never exists, as follows from Proposition~\ref{propMisorSpaceIsMfd} below.

Nevertheless, there are well known standard methods of representation for simple 3-dimensional spaces. For example, $S^3$ can be represented in $\Ro^3$ by means of stereographical projection, and $\SO(3)$ can be represented by a closed ball in $\Ro^3$ with opposite points of its boundary identified. Certainly, these ways of representation are not embeddings, but they give a way of understanding the structure of the space.

One way to solve the problem of complicated misorientation spaces was pursued by Patala and Schuh \cite{PatSch}, who introduced the notion of a \emph{homophase misorientation space}. If $G_1=G_2=G$, the homophase misorientation space is the additional quotient of $X(G,G)$ by the involution $[A]\leftrightarrow [A^{-1}]$, called \emph{grain exchange property}:
\[
\widetilde{X}(G)=X(G,G)/([A]\sim[A^{-1}]).
\]
Physically, this corresponds to the situation when two lattices are not distinguished: one cannot tell which lattice is the first and which is the second. Of course, in this case the two lattices should be indistinguishable, which explains the assumption $G_1=G_2$. The following statement was proved in~\cite{PatSch} by scrupulous analysis of the fundamental domains of the actions (proper point crystallography groups and their short notations are listed in Table~\ref{tableSubgroupsTable} below).

\begin{prop}[Patala and Schuh \cite{PatSch}]\label{propPatalaSchuh}
$\widetilde{X}(G)$ is homeomorphic to the 3-dimensional disc $D^3$ if $G$ belongs to the list: $\{D_2,D_4,D_6,T,O\}$. $\widetilde{X}(G)$ is homeomorphic to the cone over real projective plane $\Cone \RP^2$ if $G$ belongs to the list $\{C_2,C_3,C_4,C_6,D_3\}$.
\end{prop}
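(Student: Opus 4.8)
The plan is to reduce the computation of $\widetilde X(G)$ to a linear orbit-space problem on the quaternions and then to invoke the Mikhailova--Lange theorem. Let $\rho\colon S^3=\Sp(1)\to\SO(3)$ be the double cover and, for $G\subset\SO(3)$, let $\wh G=\rho^{-1}(G)\subset S^3$ be the associated binary group, which always contains $-1$. The two-sided action of $G\times G$ on $\SO(3)$ lifts to the action of $\wh G\times\wh G$ on $S^3$ by $(q_1,q_2)\cdot q=q_1q\bar q_2$, whose kernel is $\langle(-1,-1)\rangle$, so that $X(G,G)=S^3/(\wh G\times\wh G)$; and the grain-exchange involution $[A]\leftrightarrow[A^{-1}]$ lifts to quaternionic conjugation $q\mapsto\bar q$. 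Hence
\[
\widetilde X(G)=S^3/\Gamma_G,\qquad \Gamma_G:=\bigl\langle\ q\mapsto q_1q\bar q_2,\ \ q\mapsto\bar q\ :\ q_1,q_2\in\wh G\ \bigr\rangle\subset O(4),
\]
a finite group of order $4|G|^2$ acting linearly and orthogonally on $\Ro^4=\Ho$; as an abstract group $\Gamma_G\cong\bigl((\wh G\times\wh G)/\langle(-1,-1)\rangle\bigr)\rtimes\Zt$, the extra $\Zt$ being conjugation, which swaps the two binary factors.

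Next I would sort the elements of $\Gamma_G$ by the codimension of their fixed subspace in $\Ro^4$. Conjugation is $\diag(1,-1,-1,-1)$ (codimension $3$), and a left or right multiplication by a non-central element of $\wh G$ acts freely on $S^3$ (codimension $4$); but suitable products are genuine reflections (codimension $1$) and rotations (codimension $2$): already conjugation composed with an appropriate left-and-right multiplication is a reflection --- e.g.\ for $G=C_2$ one obtains $\diag(1,-1,1,1)$ --- so every one of the ten groups $\Gamma_G$ contains a reflection, together with various rotations. Let $\Gamma_G^0$ denote the normal subgroup of $\Gamma_G$ generated by all its reflections and rotations, i.e.\ by all elements whose fixed subspace has codimension $\le 2$; by construction $\Gamma_G^0$ is a reflection--rotation group containing a reflection. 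The key computation, which I would carry out group-by-group (or, more economically, uniformly for the cyclic groups $C_n$, the dihedral groups $D_n$ and the three polyhedral groups), is the following dichotomy: (i) for $G\in\{D_2,D_4,D_6,T,O\}$ one has $\Gamma_G=\Gamma_G^0$; whereas (ii) for $G\in\{C_2,C_3,C_4,C_6,D_3\}$ one has $[\Gamma_G:\Gamma_G^0]=2$, with $-\id\in\Gamma_G^0$ but the nontrivial coset represented by an involution $\varepsilon\notin\Gamma_G^0$.

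Granting this, case (i) is immediate: $\Gamma_G$ is then a finite reflection--rotation group containing a reflection, so by the Mikhailova--Lange theorem $S^3/\Gamma_G$ is homeomorphic to a closed ball, i.e.\ $\widetilde X(G)\cong D^3$, with no recourse to the Poincar\'{e} conjecture. In case (ii), the same theorem gives $S^3/\Gamma_G^0\cong D^3$, so $\widetilde X(G)=D^3/\langle\bar\varepsilon\rangle$ for the induced involution $\bar\varepsilon$. The delicate step is to show that $\bar\varepsilon$ is free on $\partial D^3=S^2$ and on the interior of $D^3$ has exactly one fixed point, near which it is topologically conjugate to $-\id$ on $\Ro^3$: its fixed points correspond to $\varepsilon$-invariant $\Gamma_G^0$-orbits in $S^3$, which one finds by an eigenvector computation, and the local model at the fixed point comes out as a reflection composed with a $90^\circ$ rotation, which becomes $-\id$ after dividing by the order-$2$ stabilizer of the fixed orbit. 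Since a $\Zt$-action on $D^3$ which is free on the boundary and has a single interior fixed point with local model $-\id$ has quotient $\Cone(S^2/\Zt)=\Cone\RP^2$ (excise a small invariant ball about the fixed point, whose quotient is $\Cone\RP^2$, and glue back $\RP^2\times[0,1]=(S^2\times[0,1])/\Zt$ along the collar), we conclude $\widetilde X(G)\cong\Cone\RP^2$.

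The main obstacle is the case analysis behind the dichotomy (i)/(ii): one must pin down $\Gamma_G\subset O(4)$ and its reflection--rotation subgroup for each of the ten groups, and, in case (ii), verify the local structure of the residual involution at its fixed point. This is, in effect, a more conceptual recasting of the fundamental-domain bookkeeping of Patala--Schuh \cite{PatSch}, with the explicit Rodrigues-space polytopes replaced by the orthogonal-group combinatorics of $\Gamma_G$. An alternative, closer to \cite{PatSch}, would be to observe that $A\mapsto A^{-1}$ is the central symmetry of Rodrigues--Frank space, to describe the misorientation fundamental zone of $G\times G$ together with its face identifications, and then to recognize the further $\Zt$-quotient by inversion directly.
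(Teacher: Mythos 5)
The paper does not prove this proposition at all: it is quoted from Patala--Schuh, whose argument is a direct analysis of fundamental domains, and the closest the paper comes to a proof is Remark~\ref{remInvolution}, which only checks \emph{consistency} with Proposition~\ref{propSameGroup} (it rules out $\Cone\RP^2$ as a quotient of an involution on $S^3$, and $D^3$ as a quotient of an involution on $\RP^3$, without identifying the grain-exchange involution explicitly). Your route is therefore genuinely different, and its skeleton is sound: the identification $\widetilde X(G)=S^3/\Gamma_G$ with $\Gamma_G=\langle 2G\htimes 2G,\ \kappa\rangle\subset O(4)$, $\kappa(q)=\bar q=\diag(1,-1,-1,-1)$, is correct, and the dichotomy you propose does hold. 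Case (i) is in fact cleaner than you suggest: for $G\in\{D_2,D_4,D_6,T,O\}$ the rotation subgroup of $2G\htimes 2G$ is already the whole group (this is exactly the content of Table~\ref{tableSubgroupsH}), and since each of these $G$ contains a $2$-rotation, $\diag(1,1,-1,-1)\in 2G\htimes 2G$ and hence $\kappa=\diag(1,-1,1,1)\cdot\diag(1,1,-1,-1)$ lies in the reflection--rotation subgroup; Lange's theorem for reflection--rotation groups \emph{containing a reflection} (which is in \cite{Lange16}/\cite{Lange}, though the paper only quotes the boundaryless half of it) then gives $D^3$ with no appeal to Poincar\'e. What your approach buys is independence from \cite{PatSch} and from fundamental-domain bookkeeping; what it costs is that the dichotomy and the fixed-point analysis of the residual involution must still be verified group by group, a computation comparable in volume to the one behind Table~\ref{tableSubgroupsH}.

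One detail of your case (ii) is wrong as stated and should be repaired. It is not true uniformly that $-\id\in\Gamma_G^0$ and that the nontrivial coset is represented by an involution: for $G=C_2$ one computes $\Gamma^0_{C_2}=\{\diag(\nu_1,\nu_2,\nu_3,\nu_3)\}$, so $-\id\in\Gamma^0_{C_2}$ but \emph{every} element of the nontrivial coset has order four (its $(c,d)$-block is forced to be a $90^\circ$ rotation), whereas for $G=C_3$ it is $-\id$ itself that represents the nontrivial coset and $-\id\notin\Gamma^0_{C_3}$. This does not break the argument --- the induced map on $D^3=S^3/\Gamma^0_G$ is an involution simply because $\Gamma_G/\Gamma^0_G\cong\Zt$ --- but the fixed-point search must range over all elements of the coset $\Gamma^0_G\varepsilon$, not over eigenvectors of one chosen $\varepsilon$, and the local model varies (for $C_2$ it is a rotoreflection becoming $-\id$ after dividing by an order-$2$ stabilizer; for $C_3$ the stabilizer in $\Gamma^0_G$ has order $3$ and the coset element is already $-\id$ on the tangent space). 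Once you know every fixed point is isolated, interior, and has local model $-\id$, Smith theory on the $\Zt$-acyclic space $D^3$ forces there to be exactly one, which shortens the count; freeness on $\partial D^3$ still needs a separate check.
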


In the disc case, the homophase misorientation spaces admit embeddings in $\Ro^3$, the coordinates given by these embeddings are described in~\cite{PatSch}. The homophase misorientation spaces are simpler, however their meaning in material science is less transparent. To define the distribution diagram of misorientations, one needs to fix a lattice as the main reference frame: the distributions of smaller fractions are counted relatively to this frame. So far, there seems to be no analogue of misorientation distribution diagram in the homophase setting.

As was noticed in \cite{PatSch}, the misorientation spaces $X(G_1,G_2)$ are well studied in topology if one of the acting groups, say $G_2$, is trivial. In this case, the remaining action of $G_1$ by the left multiplication on $\SO(3)$ is free, so the orbit space is a connected smooth 3-manifold, which is an elliptic 3-manifold (recall that an elliptic manifold is a quotient of $S^3\cong\Sp(1)$, the double cover of $\SO(3)$, by a free action of discrete group, see~\cite{Thurs}). Certain wonderful 3-manifolds appear in this way, including the following.
\begin{enumerate}
  \item If both groups are trivial, we have $X(1,1)=\SO(3)\cong \RP^3$;
  \item If $G_1=\Zo_m$ is generated by a an $m$-rotation\footnote{$m$-rotation is an axial rotation of order $m$ in $\SO(3)$}, then $\SO(3)/G_1$ is the lens space $L(2m;1)$.
  \item If $G_1=D_2\cong \Zt^2$ is generated by axial $2$-rotations in 3 orthogonal axes, then $\SO(3)/G_1\cong \Sp(1)/Q_8\cong F_3(\Ro)$, where $Q_8=\{\pm1,\pm i,\pm j,\pm k\}$ is the quaternion group, and $F_3(\Ro)$ is the manifold of full flags in $\Ro^3$.
  \item If $G_1$ is the symmetry group of an icosahedron, then $\SO(3)/G_1$ is the famous Poincar\'{e} sphere. Certainly, $G_1$ is not a point crystallographic group, so this example does not have much meaning in material science. However, in our definition of the misorientation space, arbitrary discrete subgroups of $\SO(3)$ are allowed. The icosahedral group is interesting due to its maximal complexity, so we include this group in our exposition.
\end{enumerate}
In the following we do not consider the cases when one of the groups $G_1$ or $G_2$ is trivial.

When the two-sided action of $G_1\times G_2$ on $\SO(3)$ is not free, the space $X(G_1,G_2)$ is an elliptic orbifold, see~\cite{BLP}. However, there seems to be no precise description of the underlying topology of these orbifolds for particular choices of the groups $G_1$ and $G_2$ in the literature. Our paper aims to fill in this gap.

The topology of misorientation spaces is studied by standard methods, reviewed below. At first, we have

\begin{prop}\label{propMisorSpaceIsMfd}
For arbitrary $G_1$ and $G_2$, the misorientation space $X(G_1,G_2)$ is a closed orientable topological $3$-manifold.
\end{prop}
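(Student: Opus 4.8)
To prove the proposition, the plan is to lift the $G_1\times G_2$--action to the $3$--sphere and apply the differentiable slice theorem, the crucial feature being that every local (slice) representation of an isotropy group lands in $\SO(3)$, so that the local models are honest $\Ro^3$'s rather than cones on arbitrary surfaces. First I would pass to the double cover $\Sp(1)\cong S^3$ of $\SO(3)$ and let $\wh G_i\subset\Sp(1)$ be the (binary polyhedral) preimage of $G_i$. Writing elements of $\Sp(1)$ as unit quaternions and of $\SO(3)$ via the standard epimorphism $\Sp(1)\to\SO(3)$, the two--sided action $(q_1,q_2)\colon x\mapsto q_1xq_2^{-1}$ of $\wh G_1\times\wh G_2$ on $\Sp(1)$ covers the action of Definition~\ref{definMisorSpace}, and since $-1\in\wh G_i$ one gets $X(G_1,G_2)=\Sp(1)/(\wh G_1\times\wh G_2)$. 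As $\Sp(1)$ is compact and the acting group is finite, $X(G_1,G_2)$ is automatically compact, Hausdorff and second countable, so only local Euclideanness and orientability remain.

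Next I would identify the isotropy data. The stabilizer of $x\in\Sp(1)$ is $H_x=\{(q_1,q_2)\in\wh G_1\times\wh G_2:q_1=xq_2x^{-1}\}$, which projects isomorphically onto the finite subgroup $\wh G_2\cap x^{-1}\wh G_1x$ of $\Sp(1)$. Trivializing $T\Sp(1)$ by left translations, i.e. writing nearby points as $x\,e^{t\xi}$ with $\xi\in\imm\Ho\cong\Ro^3$, a short computation gives $(q_1,q_2)\cdot x\,e^{t\xi}=x\,e^{t\,q_2\xi q_2^{-1}}$, so the slice representation of $H_x$ on $T_x\Sp(1)\cong\Ro^3$ is precisely the restriction of $\Sp(1)\to\SO(3)$ to $H_x$. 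Its image is therefore a finite subgroup $\Gamma\subset\SO(3)$, and its kernel (contained in $\{\pm1\}$) acts trivially on the slice, so by the differentiable slice theorem a neighbourhood of the orbit of $x$ in $X(G_1,G_2)$ is homeomorphic to $\Ro^3/\Gamma$.

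The heart of the argument is then the purely topological claim that $\Ro^3/\Gamma\cong\Ro^3$ for every finite $\Gamma\subset\SO(3)$. Such a $\Gamma$ acts on $\Ro^3$ as the cone on its linear action on $S^2$, so $\Ro^3/\Gamma\cong\Cone(S^2/\Gamma)$; the same slice argument in dimension $2$ (where isotropy groups are cyclic and $\Ro^2/\Zo_n\cong\Ro^2$) shows $S^2/\Gamma$ is a closed surface, orientable because $\Gamma\subset\SO(3)$. Every non--identity element of $\Gamma$ is a rotation and hence has fixed points on $S^2$, so Armstrong's theorem on orbit spaces yields $\pi_1(S^2/\Gamma)=1$, whence $S^2/\Gamma\cong S^2$ and $\Ro^3/\Gamma\cong\Cone(S^2)\cong\Ro^3$. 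This makes $X(G_1,G_2)$ locally Euclidean, hence a closed topological $3$--manifold. For orientability I would note that $\Sp(1)$ is connected, so every left and every right translation is isotopic to the identity and thus orientation preserving; hence $\wh G_1\times\wh G_2$ acts by orientation preserving diffeomorphisms. The non--free locus of the action, being covered by the rotation axes appearing in the slice representations, is of codimension $\ge 2$ in $X(G_1,G_2)$, so the orientation of $\Sp(1)$ descends to the open dense free part and extends across the singular set, giving $w_1(X(G_1,G_2))=0$.

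The step I expect to be the main obstacle is the determination of the local models: one must carry out the left--trivialization computation to see the slice representation lies in $\SO(3)$, and then establish $\Ro^3/\Gamma\cong\Ro^3$, which is exactly where the hypothesis $G_i\subset\SO(3)$ is genuinely used — with orientation--reversing elements allowed, a reflection would produce a codimension--one fixed locus and $\Ro^3/\Gamma$ would acquire a boundary, destroying the manifold property. Everything else (the slice theorem, compactness, the orientation argument) is routine once the local picture is pinned down.
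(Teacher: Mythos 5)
Your argument is correct, and its skeleton matches the paper's: both reduce the problem to the local statement that $\Ro^3/\Gamma\cong\Ro^3$ for a finite $\Gamma\subset\SO(3)$ arising as a slice representation (your left-trivialization computation showing the slice representation of $H_x$ factors through $\Sp(1)\to\SO(3)$ is exactly the point that makes this reduction work, and your codimension-two argument for orientability is fine). Where you diverge is in how that local statement is established. The paper simply cites the general fact as Proposition~\ref{propFiniteActionOnMfds} (from \cite{CHK}) and sketches two justifications: either invoke the Lange--Mikhailova theorem (Proposition~\ref{propMikh}), since every element of a finite subgroup of $\SO(3)$ is a rotation, or observe that every such subgroup is the orientation-preserving index-two subgroup of a reflection group and apply the fundamental-domain doubling argument of Proposition~\ref{propMikhIndex2}. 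You instead prove $\Ro^3/\Gamma\cong\Ro^3$ from scratch: $\Ro^3/\Gamma\cong\Cone(S^2/\Gamma)$, a two-dimensional slice argument shows $S^2/\Gamma$ is a closed orientable surface, Armstrong's theorem kills $\pi_1$ because every nontrivial rotation has fixed points on $S^2$, and the classification of surfaces gives $S^2/\Gamma\cong S^2$. Your route is more self-contained --- it needs neither the Lange--Mikhailova machinery nor the reflection-group fact --- at the cost of being longer; the paper's route is a citation plus a two-line sketch, and its doubling argument has the added virtue of being reused later (e.g.\ in the proof of Proposition~\ref{propCyclicCoordsDcaseEven}).
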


This follows from a more general well-known statement formulated in Proposition~\ref{propFiniteActionOnMfds} below. In Section~\ref{secMisorientationsMain}, we recall the classical Armstrong theorem, which is used to compute the fundamental groups of misorientation spaces. The fundamental groups are computed for a list of pairs $(G_1,G_2)$: this calculation is the central technical part of the paper. The groups $G_1$ and $G_2$ are taken from Table~\ref{tableSubgroupsTable}: these are the standard 10 nontrivial proper point crystallography groups and, additionally, the icosahedral group. The fundamental groups of misorientation spaces are listed in Table~\ref{tableFundGrps}, see Theorem~\ref{thmFundGrps}. We see that many misorientation spaces are simply connected, hence homeomorphic to $S^3$ according to Poincar\'{e} conjecture. It should be mentioned that elliptic orbifolds with underlying spaces homeomorphic to $S^3$ were extensively studied by Dunbar in~\cite{Dun}.

In general, the finite fundamental group of a closed 3-manifold determines its homeomorphism type uniquely,
unless the fundamental group is cyclic, in which case there may be nonhomeomorphic lens spaces. This follows from Thurston's geometrization conjecture proved by Perelman, see~\cite{Lott}. This observation allows to describe the topological types of all misorientation spaces with non-cyclic fundamental groups. Fortunately, the cyclic cases split in two categories: (1) in one category, the homeomorphism type of the corresponding lens spaces is uniquely determined by the group; (2) in the second category, the topological type can be described ``by hand''. The resulting homeomorphism types of misorientation spaces are gathered in Table~\ref{tableMisSpaces}, see Theorem~\ref{thmAllMisSpaces}. In particular, we have the following statement.

\begin{table}[h]
	\centering
\resizebox{\columnwidth}{!}{
\begin{tabular}{c||c|c|c|c|c|c|c|c|c|c|c|}
& $C_2$ & $C_3$ & $C_4$ & $C_6$ & $D_3$ & $D_2$ & $D_4$ & $D_6$ & $T$ & $O$ & $I$ \\
\hline
\hline
$C_2$ & $\RP^3$ & $L(12;5)$ & $L(4;1)$ & $L(6;1)$ & $\RP^3$ & $S^3$ & $S^3$ & $S^3$ & $L(3;1)$ & $S^3$ & $S^3$ \\
\hline
$C_3$ && $\RP^3$ & $L(24;7)$ & $L(4;1)$ & $L(4;1)$ & Free & Free & $\Fl(\Ro^3)$ & $S^3$ & $\RP^3$ & $S^3$ \\
\hline
$C_4$ &&& $\RP^3$ & $L(12;5)$ & $L(4;1)$ & $\RP^3$ & $S^3$ & $\RP^3$ & $L(6;1)$ & $S^3$ & $\RP^3$ \\
\hline
$C_6$ &&&& $\RP^3$ & $\RP^3$ & $L(3;1)$ & $L(3;1)$ & $S^3$ & $S^3$ & $S^3$ & $S^3$ \\
\hline
$D_3$ &&&&& $\RP^3$ & $S^3$ & $S^3$ & $S^3$ & $S^3$ & $S^3$ & $S^3$ \\
\hline
$D_2$ &&&&&& $S^3$ & $S^3$ & $S^3$ & $L(3;1)$ & $S^3$ & $S^3$ \\
\hline
$D_4$ &&&&&&& $S^3$ & $S^3$ & $L(3;1)$ & $S^3$ & $S^3$ \\
\hline
$D_6$ &&&&&&&& $S^3$ & $S^3$ & $S^3$ & $S^3$ \\
\hline
$T$ &&&&&&&&& $S^3$ & $S^3$ & $S^3$ \\
\hline
$O$ &&&&&&&&&& $S^3$ & $S^3$ \\
\hline
$I$ &&&&&&&&&&& $S^3$ \\
\hline
\end{tabular}
}
 \caption{Topology of misorientation spaces. The positions $X(C_3,D_2)$ and $X(C_3,D_4)$, filled with ``Free'', are explained in Remark~\ref{remFreeExplain}.}\label{tableMisSpaces}
\end{table}

\begin{prop}\label{propSameGroup}
The misorientation space $X(G,G)$ is homeomorphic to a 3-sphere, if $G$ belongs to the list: $\{D_2,D_4,D_6,T,O,I\}$. The misorientation space $X(G,G)$ is homeomorphic to $\RP^3$, if $G$ belongs to the list: $\{C_2,C_3,C_4,C_6,D_3\}$.
\end{prop}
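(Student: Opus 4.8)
The plan is to first compute the fundamental group $\pi_1(X(G,G))$ by hand, and then feed the answer into the topological inputs recalled in the introduction: if $\pi_1(X(G,G))$ is trivial then $X(G,G)\cong S^3$ by the Poincar\'e conjecture, while if $\pi_1(X(G,G))\cong\Zo_2$ then by Thurston's elliptization conjecture $X(G,G)$ is a lens space, and the only lens space with fundamental group $\Zo_2$ is $L(2;1)=\RP^3$. So the whole statement reduces to showing that $\pi_1(X(G,G))$ is trivial for $G\in\{D_2,D_4,D_6,T,O,I\}$ and is $\Zo_2$ for $G\in\{C_2,C_3,C_4,C_6,D_3\}$.

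To access $\pi_1$ I would pass to the universal cover $\Sp(1)=S^3$ of $\SO(3)$ and let $\widetilde G\subset\Sp(1)$ be the binary group, i.e. the preimage of $G$, of order $2|G|$. The $(G\times G)$-action lifts to the action of $\widetilde G\times\widetilde G$ on $S^3$ by $(g_1,g_2)\cdot q=g_1qg_2^{-1}$, whose kernel is $\{(1,1),(-1,-1)\}$, so that $X(G,G)=S^3/\Gamma$ for the finite group $\Gamma=(\widetilde G\times\widetilde G)/\langle(-1,-1)\rangle$. By Armstrong's theorem $\pi_1(X(G,G))\cong\Gamma/\Gamma_0$, where $\Gamma_0$ is the normal subgroup generated by the elements of $\Gamma$ possessing a fixed point on $S^3$; and $(g_1,g_2)$ fixes some $q$ exactly when $g_1q=qg_2$, i.e. when $g_1$ and $g_2$ are conjugate in $\Sp(1)$, i.e. when $\HRe g_1=\HRe g_2$. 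Collecting this, $\pi_1(X(G,G))\cong Q:=(\widetilde G\times\widetilde G)/N$ where $N$ is the normal closure of $S=\{(g_1,g_2):\HRe g_1=\HRe g_2\}$.

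The crucial structural remark is that $Q$ is always an elementary abelian $2$-group. For every $g\in\widetilde G$ both $(g,g)$ and $(g,g^{-1})=(g,\bar g)$ belong to $S$, since $\HRe\bar g=\HRe g$; writing $x,y\colon\widetilde G\to Q$ for the homomorphisms induced by the two coordinate factors, the relations forced by $(g,g)$ and $(g,g^{-1})$ read $y(g)=x(g)^{-1}=x(g^{-1})$, and as $x$ is a homomorphism this gives $x(g)^2=1$ for all $g$. Since $x(\widetilde G)=y(\widetilde G)$ generates $Q$ and these subgroups commute elementwise, $Q$ is abelian of exponent $2$, so $Q\cong(\Zo_2)^k$. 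But $X(G,G)$ is a closed (orientable) $3$-manifold by Proposition~\ref{propMisorSpaceIsMfd} with finite fundamental group, hence its universal cover is a closed simply connected $3$-manifold --- in particular a $\Zo_2$-homology $3$-sphere --- carrying a free action of $Q$; by Smith theory $\Zo_2\times\Zo_2$ admits no free action on a $\Zo_2$-homology sphere, so $k\le1$ and $\pi_1(X(G,G))\in\{1,\Zo_2\}$.

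Finally I would distinguish the two cases. Repeating the computation above for an arbitrary homomorphism $f\colon\widetilde G\times\widetilde G\to\Zo_2$ trivial on $S$ shows that $f=h\oplus h$ for a homomorphism $h\colon\widetilde G\to\Zo_2$ constant on the $\Sp(1)$-conjugacy classes, i.e. on the level sets of $\HRe$ restricted to $\widetilde G$; conversely every such $h$ produces an $f$ killing $S$. Thus $\pi_1(X(G,G))\cong\Zo_2$ precisely when $\widetilde G$ admits a nontrivial homomorphism to $\Zo_2$ that is constant on $\Sp(1)$-conjugacy classes, and $\pi_1(X(G,G))=1$ otherwise. For $G=C_m$ with $m\in\{2,3,4,6\}$, $\widetilde G$ is cyclic of order $2m$ and reduction modulo $2$ is such a homomorphism; for $G=D_3$ the sign character of $D_3$ lifts to such a homomorphism (the three $\pi$-rotations, all $\SO(3)$-conjugate, map to $1$, the rotations of order $3$ and the identity map to $0$); these give $\RP^3$. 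For the remaining groups one checks that no nontrivial homomorphism $\widetilde G\to\Zo_2$ is conjugacy-constant: $\widetilde T$ has abelianization $\Zo_3$ and $\widetilde I$ is perfect, so they have no nontrivial homomorphism to $\Zo_2$ at all; for $\widetilde O$ the unique one is the pullback of the sign character of $O=S_4$, which takes different values on the edge- and face-$\pi$-rotations even though these are $\SO(3)$-conjugate; and for the binary dihedral groups $\widetilde{D_2},\widetilde{D_4},\widetilde{D_6}$ the single conjugacy class $\{\HRe=0\}$ already contains elements on which any homomorphism to $\Zo_2$ is non-constant; these give $S^3$. The main obstacle I expect is exactly this last bookkeeping for the binary dihedral and octahedral groups (and being careful that the universal cover really is a homology sphere so that Smith theory applies); alternatively, the statement is simply the diagonal of Table~\ref{tableMisSpaces}, so it follows at once from Theorem~\ref{thmAllMisSpaces} together with the fundamental-group computation of Theorem~\ref{thmFundGrps}.
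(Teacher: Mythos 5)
Your proposal is correct. The foundations are the same as the paper's (Armstrong's theorem, the criterion that $g_1,g_2\in\Sp(1)$ are conjugate iff $\HRe g_1=\HRe g_2$, and then Poincar\'e/elliptization plus the classification of lens spaces to pass from $\pi_1$ to the homeomorphism type), but the group-theoretic core is organized quite differently. The paper obtains the diagonal of Table~\ref{tableMisSpaces} as a by-product of the full case-by-case computation of Theorem~\ref{thmFundGrps}, via the auxiliary subgroups $H_1,H_2$ of Construction~\ref{conH12defin} and Tables~\ref{tableSubgroupsH} and~\ref{tableAuxiliary}. You instead exploit the symmetry of the diagonal case: since both $(g,g)$ and $(g,g^{-1})$ have $\HRe$-equal coordinates, the quotient $(\widetilde G\times\widetilde G)/N$ is forced to be an elementary abelian $2$-group, Smith theory (equivalently, periodicity of the cohomology of groups acting freely on a homology $3$-sphere --- and your universal cover is indeed an integral homology sphere by Hurewicz and Poincar\'e duality) bounds its rank by one, and the dichotomy reduces to whether $\widetilde G$ carries a nontrivial character to $\Zo_2$ constant on the level sets of $\HRe$. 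This is a genuinely slicker and more uniform treatment of the diagonal: the only remaining work is the existence/nonexistence of such a character, and your verifications (mod-$2$ reduction for $2C_m$, the index-two cyclic subgroup of $\Dic_3$, perfectness of $2I$, the $\Zo_3$ abelianization of $2T$, the failure of the sign character of $O$ on the two $\SO(3)$-conjugate types of $2$-rotations, and the non-constancy on $\{\HRe=0\}$ for the binary dihedral groups) are all correct. What you give up relative to the paper is generality --- the trick with $(g,g)$ and $(g,g^{-1})$ uses $G_1=G_2$ and says nothing about the off-diagonal entries --- and you import one extra topological input (the free-action restriction on homology spheres) that the paper does not need.
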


This result is consistent with Proposition~\ref{propPatalaSchuh}. The involution $[A]\mapsto [A^{-1}]$ on $X(G,G)$ is non-free: all 2-rotations represent the fixed points stratum of dimension~2. It is natural to expect that the quotient $\widetilde{X}(G)=X(G,G)/([A]\sim[A^{-1}])$ is a disk if $X(G,G)$ is a 3-sphere, and the space $\widetilde{X}(G)$ is $\Cone \RP^2$ if $X(G,G)$ is homeomorphic to $\RP^3$. This argument is explained in detail in Remark~\ref{remInvolution}.

Proposition~\ref{propSameGroup} allows to introduce nontrivial $n$-valued group structures on the manifolds $S^3$ and $\RP^3$. We review the related constructions in Subsection~\ref{subsecNval}.

Certain particular consequences can be derived from Table~\ref{tableMisSpaces}. For example, the homeomorphism $X(C_2,I)\cong S^3$ implies that there exists an involution on the Poincar\'{e} sphere whose orbit space is homeomorphic to $S^3$. This fact is not surprising: it is known that Poincar\'{e} sphere is a branched 2-fold covering over $S^3$ ramified over a toric knot $T(3,5)$, see \cite{KirSch}.
%

The contents of Table~\ref{tableMisSpaces} rely on the elliptization conjecture. The particular cases when $X(G_1,G_2)\cong S^3$ follow from Poincar\'{e} conjecture. This looks like shooting sparrows with a cannon: the proof of Poincar\'{e} conjecture does not provide explicit spherical coordinates, and cannot be used further in the study of misorientation distribution diagrams. To find convenient coordinates (in the cases when $X(G_1,G_2)$ is homeomorphic to $S^3$ or $\RP^3$), is a separate problem that we want to address.

In Section~\ref{secCoordinatesSpaces}, we construct explicit coordinates and describe the homeomorphism type avoiding the reference to Poincar\'{e} conjecture for several misorientation spaces. Namely, we explicitly prove the homeomorphisms $X(C_n,C_n)\cong\RP^3$, $X(D_n,D_n)\cong \RP^3$ for odd $n$, and $X(D_n,D_n)\cong S^3$ for even $n$, see Propositions~\ref{propCyclicCoords}, \ref{propCyclicCoordsDcaseOdd}, and \ref{propCyclicCoordsDcaseEven}.

There is a straightforward relation of spherical misorientation spaces to invariant theory of discrete transformation groups. More precisely, the pair of groups $G_1,G_2\subset \SO(3)$ gives rise to the discrete subgroup
\begin{equation}\label{eqDiscreteSO4}
2G_1\htimes 2G_2\subset(\Sp(1)\times \Sp(1))/\{\pm1\}\cong \SO(4).
\end{equation}
of orthogonal transformations of $\Ro^4$. Here $2G_1$ and $2G_2$ are the binary extensions of $G_1$ and $G_2$ in the group $\Sp(1)\cong S^3$ of unit quaternions; $2G_1\htimes 2G_2=(2G_1\times 2G_2)/\{\pm1\}$; and $(\Sp(1)\times \Sp(1))/\{\pm1\}\cong \SO(4)$ is the standard isomorphism of Lie groups given by the two-sided multiplication action of $\Sp(1)\times\Sp(1)$ on $\Ho\cong\Ro^4$.

We show that $\pi_1(X(G_1,G_2))$ is trivial if and only if the representation of $2G_1\htimes 2G_2$ on $V\cong \Ro^4$ is generated by rotations, see Corollary~\ref{corSpherePseudoref}. A rotation is an orientation preserving orthogonal transformation which is identical on a real codimension 2 subspace. The following theorem first appeared in the work of Mikhailova~\cite{Mikh}, however its complete proof was given by Lange~\cite{Lange16} based on the complete classification of finite rotation groups obtained by Lange and Mikhailova~\cite{LangMikh}.

\begin{prop}[Lange--Mikhailova]\label{propMikh}
If $V\cong \Ro^n$, and a finite subgroup $G\subset \GL(V)$ is generated by rotations, then $V/G$ is homeomorphic to $V$.
\end{prop}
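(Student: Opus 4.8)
\emph{Strategy.} The plan is to follow the route of Mikhailova~\cite{Mikh} as completed by Lange~\cite{Lange16}: normalize the action, reduce to indecomposable rotation groups, invoke the Lange--Mikhailova classification of finite rotation groups~\cite{LangMikh}, and then verify the conclusion for each entry on that list.

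\emph{Reductions.} Averaging an inner product over $G$, we may assume $G\subset O(V)$ acts orthogonally, so that every rotation in $G$ is a genuine rotation by a nonzero angle in some $2$-plane, acting trivially on the orthogonal complement. Since the action is linear, $V/G$ is the open cone $\Cone(S^{n-1}/G)$ over the unit sphere $S^{n-1}\subset V$; as $V/G$ is always contractible, the assertion $V/G\cong V$ is equivalent to $S^{n-1}/G\cong S^{n-1}$, that is, to $V/G$ being a topological manifold without boundary. Once $S^{n-1}/G$ is known to be a closed simply connected manifold with the homology of a sphere, one concludes by the Poincar\'{e} conjecture in dimension $n-1$. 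Next, decompose $V$ into an orthogonal direct sum of $G$-indecomposable invariant subspaces. In contrast with the complex Chevalley--Shephard--Todd situation, $G$ need not split as a direct product along this decomposition — already $D_2\subset\SO(3)$ is $\Ro$-indecomposable while being reducible — so one argues summand by summand, and it suffices to prove the theorem for a single $G$-indecomposable finite rotation group at a time.

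\emph{Classification and case analysis.} Now one uses the classification of~\cite{LangMikh}. After the reductions, the $G$-indecomposable finite rotation groups form a short list: (i) geometric low-dimensional families, including cyclic rotation groups of $\Ro^2$, arbitrary finite subgroups of $\SO(3)$ acting on $\Ro^3$, and rotation subgroups $W^{+}$ of Coxeter groups $W$; (ii) the (irreducible) complex reflection groups acting on $\Ro^{2m}\cong\Co^m$; and (iii) a finite remaining list of exceptional groups occurring in a few low dimensions. For (i) the quotients are accessible: $\Ro^2/C_m\cong\Ro^2$ via $z\mapsto z^m$; $\Ro^3/G\cong\Ro^3$ because $S^2/G\cong S^2$ for every finite $G\subset\SO(3)$; and $\Ro^n/W^{+}$ is the double of the closed Weyl chamber along its walls, which one checks is again homeomorphic to $\Ro^n$. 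For (ii) one quotes Chevalley--Shephard--Todd directly: the algebra of $G$-invariant complex polynomials is free, hence $\Co^m/G$ is affine $m$-space as a variety and in particular homeomorphic to $\Ro^{2m}$. The exceptional groups in (iii) are treated individually, typically by writing down a generating set of invariants and identifying the real quotient, or by showing that the link $S^{n-1}/G$ is a closed simply connected manifold and appealing to the Poincar\'{e} conjecture — the four-dimensional instances amount to recognizing certain elliptic $3$-orbifolds with underlying space $S^3$, in the spirit of the rest of this paper.

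\emph{The main obstacle.} The crux is not any single quotient computation but the classification itself: proving that the above list is complete. This is precisely the step at which Mikhailova's original argument was incomplete and which~\cite{LangMikh} supplies. The derivation runs parallel to the Shephard--Todd classification of complex reflection groups but is harder, since rotation groups include genuinely real phenomena — the $\SO(3)$- and $\SO(4)$-subgroups, the even Coxeter groups, and the sporadic families — that admit no complex-algebraic shortcut. Once the classification is in hand, the remaining verification of $V/G\cong V$ across the families, the finitely many sporadic cases included, is routine though laborious.
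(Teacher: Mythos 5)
The paper does not prove Proposition~\ref{propMikh} at all: it is imported wholesale from Mikhailova~\cite{Mikh} and Lange~\cite{Lange16}, resting on the classification in~\cite{LangMikh}, so there is no internal argument to compare yours against. What you have written is a reading guide to those references rather than a proof: the two genuinely hard ingredients --- the completeness of the classification of finite rotation groups, and the verification of $S^{n-1}/G\cong S^{n-1}$ for every entry (in particular the sporadic four-dimensional groups, where the ring of invariants need not be free, cf.\ Subsection~\ref{subsecInvar}) --- are exactly the steps you defer to citation. You say as much yourself in your last paragraph, so as a self-contained proof the proposal is empty at its core; as a summary of the literature's strategy it is essentially accurate.

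There is also a concrete error in your reduction step. You propose to decompose $V$ into $G$-indecomposable invariant summands and ``argue summand by summand,'' offering $D_2\subset\SO(3)$ as an indecomposable example. In fact $\Ro^3$ \emph{is} decomposable as a $D_2$-representation --- it splits into the three coordinate axes --- but $D_2\cong\Zt^2$ is a proper subdirect product of the restrictions $\Zt\times\Zt\times\Zt$ to those axes. Consequently the quotient does not factor along the decomposition: the product of the summand quotients is the closed octant $(\Rg)^3$, a manifold with boundary, whereas $\Ro^3/D_2\cong\Ro^3$ (Proposition~\ref{propMikhIndex2}); the natural map $\Ro^3/D_2\to(\Rg)^3$ is a branched double cover, not a homeomorphism. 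So the claim that ``it suffices to prove the theorem for a single $G$-indecomposable finite rotation group at a time'' fails as stated. This subdirect-product phenomenon is precisely why the real classification is harder than Shephard--Todd and why~\cite{LangMikh} must classify pairs $(G,V)$ together with the way $G$ sits inside the product of its projections; any correct write-up has to either drop the splitting reduction or reformulate it in those terms.
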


The converse statement holds true if $\dim V=4$, see Proposition~\ref{propCriter1con}. 
Note that the homeomorphism $S^{n-1}/G\cong S^{n-1}$ implies the homeomorphism $\Ro^n/G\cong \Ro^n$. Hence, in the case $n=4$, Proposition~\ref{propMikh} follows from Poincar\'{e} conjecture.
The classification of all finite rotation groups obtained in~\cite{LangMikh}, allowed Lange~\cite{Lange16} to prove that $V/G$, taken with a natural triangulation, is a PL-manifold without boundary if and only if $G$ is generated by rotations, and in this case $V/G\cong V$ and $S^{n-1}/G\cong S^{n-1}$.

Unfortunately, the homeomorphism $V/G\cong V$ of Proposition~\ref{propMikh} is not always the consequence of the fact that the algebra of invariants $\Ro[V]^G$ is the polynomial algebra in $n=\dim V$ generators, see subsection~\ref{subsecInvar} (the corresponding statement holds in the complex case according to Chevalley--Shephard--Todd theorem~\cite{ShepTodd}). However, in the cases when $\Ro[V]^G$ is a polynomial algebra $\Ro[f_1,\ldots,f_n]$, the homogeneous polynomials $f_1,\ldots,f_n$ can be used to construct the spherical coordinates on $S^{n-1}/G\cong S^{n-1}$ given by the real analytic functions, see Construction~\ref{conCoordInvar}.

The particular misorientation space $X(D_2,D_2)$ appeared independently in many areas of mathematics. In Section~\ref{secD2detailed}, we review several ways to understand the topology of this space. The first proof of the homeomorphism $X(D_2,D_2)\cong S^3$ is contained implicitly in the work of van Moerbeke \cite{VanM} on the dynamical properties of the periodic Toda lattice. Another way to understand this homeomorphism is related to toric topology and symplectic geometry: its complex version: $\sit{T^3}\backslash \jump{U(3)}/\sit{T^3}\cong S^4$ was proved by Buchstaber--Terzic~\cite{BT2} and, independently, by the first author~\cite{AyzMatr}. Finally, the extensive study of $X(D_2,D_2)$ from the perspective of invariant theory was done by Mikhailova in~\cite{Mikh}. The case of $X(D_2,D_2)$ is the most difficult, and in some sense, the most essential part of the study of discrete transformations of $\Ro^4$.
%

%

\section{Misorientation spaces}\label{secMisorientationsMain}

\subsection{Finite group actions on 3-manifolds}\label{subsecGeneralActions}

In this subsection we collect several known results about non-free finite group actions on 3-manifolds in general. The following theorem is well known (see, e.g. \cite[Thm.2.5]{CHK}).

\begin{prop}\label{propFiniteActionOnMfds}
Let $M^3$ be a smooth closed orientable 3-manifold, and a finite group $G$ acts on $M$ by orientation preserving diffeomorphisms. Then the quotient space $M^3/G$ is a topological 3-manifold.
\end{prop}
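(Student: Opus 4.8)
The plan is to show that every point of $M^3/G$ has a neighborhood homeomorphic to $\Ro^3$, and the key tool is the slice theorem for compact (here, finite) group actions on smooth manifolds. Fix a point $x \in M^3$ with stabilizer $G_x \subseteq G$; since $G$ acts by orientation-preserving diffeomorphisms, after averaging a Riemannian metric we may assume $G$ acts by isometries, so the isotropy representation of $G_x$ on the tangent space $T_xM \cong \Ro^3$ is orthogonal and orientation-preserving, i.e. factors through $\SO(3)$. By the slice theorem, a $G$-invariant neighborhood of the orbit $Gx$ is equivariantly diffeomorphic to $G \times_{G_x} T_xM$, and hence a neighborhood of the image point $[x]$ in $M^3/G$ is homeomorphic to $T_xM / G_x \cong \Ro^3/G_x$. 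So everything reduces to the local claim: for any finite subgroup $H \subseteq \SO(3)$, the quotient $\Ro^3/H$ is homeomorphic to $\Ro^3$.

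To prove the local claim, I would first pass to the unit sphere: since $H$ acts linearly, $\Ro^3/H$ is the open cone on $S^2/H$, so it suffices to show $S^2/H$ is homeomorphic to $S^2$ (then the cone on it is $\Ro^3$, with the cone point corresponding to the origin). Now $H$, being a finite subgroup of $\SO(3)$, is one of the classical list: cyclic $C_n$, dihedral $D_n$, or a polyhedral group $T, O, I$. The cleanest uniform argument is that $S^2/H$ is a closed surface: away from the fixed points of nontrivial elements the action is free, giving a surface chart, and at a fixed point the stabilizer is a cyclic rotation group $C_k$ about that axis, whose quotient near the fixed point is again a disk (the quotient map $z \mapsto z^k$ in a local complex coordinate). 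Thus $S^2/H$ is a closed surface, and it is a quotient of $S^2$ hence simply connected? — that is not automatic, so instead I would compute its Euler characteristic via the Riemann–Hurwitz formula: $\chi(S^2) = |H|\cdot\chi(S^2/H) - \sum_{\text{branch pts}}(e_i - 1)$ where the branch points come in $H$-orbits, and a direct count using the orbit-counting structure of finite subgroups of $\SO(3)$ (equivalently, the well-known fact that such quotients are spheres with at most three cone points) yields $\chi(S^2/H) = 2$. Since $S^2/H$ is also orientable (the $G$-action preserves orientation, and the singular set has codimension $\geq 2$, so orientability descends), it must be $S^2$.

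The main obstacle, and the place where care is needed, is the step asserting that the quotient is genuinely a \emph{manifold} at the singular points, rather than merely an orbifold — i.e. that $D^2/C_k \cong D^2$ and, more to the point, that gluing these local pictures along the free part really produces a topological surface without pinch points. For $D^2/C_k$ this is the elementary complex-analytic fact just mentioned; the global assembly then follows because the orbifold $S^2/H$ has only cone-point singularities (no mirror boundary, since $H \subset \SO(3)$ contains no reflections), and cone points of a $2$-orbifold are topologically smooth points of the underlying surface. One should also note that the argument is genuinely $3$-dimensional: it is special that the link of each singular point is $S^2$, which fails in higher dimensions, so this proof does not generalize — but in dimension $3$ it is complete. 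Finally, orientability of $M^3/G$ follows from orientability of $M^3$ together with the fact that the singular locus has codimension $\geq 2$ (rotations in $\SO(3)$ fix at most a line), so $M^3/G$ is orientable away from a codimension-$2$ set and hence orientable; and closedness and the topological-manifold property assembled over all orbits complete the proof.
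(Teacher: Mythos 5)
Your proof is correct, and it reduces to the same local statement as the paper does --- namely that $\Ro^3/H\cong\Ro^3$ for every finite $H\subset\SO(3)$ --- but then handles that local statement by a genuinely different and more self-contained route. The paper disposes of the local claim by citing Proposition~\ref{propMikh} (every nontrivial element of $\SO(3)$ is a rotation, so $H$ is generated by rotations and Mikhailova--Lange applies), or alternatively by Proposition~\ref{propMikhIndex2}, realizing $H$ as the index-two orientation-preserving subgroup of a reflection group and doubling the fundamental domain. You instead write $\Ro^3/H$ as the open cone on $S^2/H$, check that $S^2/H$ is a closed surface (each stabilizer is a cyclic rotation group about the axis through the fixed point, and $D^2/C_k\cong D^2$ via $z\mapsto z^k$), and identify it as $S^2$ by an Euler characteristic count. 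This buys independence from the general Mikhailova theorem and from the classification of reflection groups, at the price of being strictly a dimension-three argument, as you yourself note. Two small polishing remarks: the Riemann--Hurwitz count can be made completely explicit without invoking the classification of finite subgroups of $\SO(3)$, since every nontrivial element of $H$ has exactly two fixed points on $S^2$, giving $\sum_x(|H_x|-1)=\sum_{g\neq 1}|\mathrm{Fix}(g)|=2(|H|-1)$ and hence $\chi(S^2/H)=2$ directly; alternatively, since every element of $H$ has a fixed point on $S^2$, Armstrong's theorem (Proposition~\ref{propArmstrong}, already quoted in the paper) gives $\pi_1(S^2/H)=1$, and the only simply connected closed surface is $S^2$, which bypasses the Euler characteristic computation entirely.
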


The proof also follows from the result of Mikhailova, Proposition~\ref{propMikh}. In order to show that $M^3/G$ is a manifold, one needs to prove the local statement: that is the homeomorphism $\Ro^3/G\cong \Ro^3$ for any discrete subgroup $G\subset\SO(3)$. However, every element of $\Ro^3$ is a rotation, so the homeomorphism $\Ro^3/G\cong \Ro^3$ follows from Proposition~\ref{propMikh}. Note that in dimension 3, every discrete subgroup of $\SO(3)$ is the index two subgroup of some group generated by reflections, therefore, in this case Proposition~\ref{propFiniteActionOnMfds} is the instance of the following statement.

\begin{prop}[{Mikhailova, \cite[Thm.1.1]{Mikh}}]\label{propMikhIndex2}
Let $\Gamma$ be a finite subgroup generated by reflections in the space $V\cong \Ro^n$, and let $G=\Gamma^+$ be the subgroup of $\Gamma$ generated by pairwise products of reflections of $\Gamma$ (equiv. $G$ is the subgroup of orientation preserving transformations from $\Gamma$). Then $G$ is generated by rotations, and we have the homeomorphisms $V/G\cong V$ and $S^{n-1}/G\cong S^{n-1}$.
\end{prop}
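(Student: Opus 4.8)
The statement to prove is Proposition~\ref{propMikhIndex2}: for a finite reflection group $\Gamma\subset O(V)$, $V\cong\Ro^n$, the rotation subgroup $G=\Gamma^+$ is generated by rotations (products of two reflections), and $V/G\cong V$, $S^{n-1}/G\cong S^{n-1}$. Given the results already available in the excerpt, the bulk of the work reduces to two separate tasks: an algebraic one about generators, and a topological one about the quotient.

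First I would dispense with the generation claim. Every element of $G=\Gamma^+$ is, by definition of $\Gamma$ being a reflection group, a product of an even number of reflections in $\Gamma$; grouping these reflections in consecutive pairs expresses each element of $G$ as a product of elements of the form $s_\alpha s_\beta$, each of which fixes pointwise the codimension-$2$ subspace $H_\alpha\cap H_\beta$ (when $H_\alpha\ne H_\beta$) or is the identity. Such $s_\alpha s_\beta$ is orientation preserving and acts as a planar rotation on the orthogonal $2$-plane, hence is a rotation in the sense of the paper. Thus $G$ is generated by the rotations $\{s_\alpha s_\beta : \alpha,\beta \text{ roots of }\Gamma\}$, giving the first assertion.

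With $G$ generated by rotations, the homeomorphism $V/G\cong V$ is then \emph{exactly} Proposition~\ref{propMikh} (Lange--Mikhailova) applied to $G\subset\GL(V)$, so there is nothing more to do for the $V/G\cong V$ part. For the sphere statement $S^{n-1}/G\cong S^{n-1}$, I would use the fact that $G$ acts on $V$ by orthogonal (in particular linear, degree-one-homogeneous) transformations, so the radial structure descends: $V/G$ is the open cone on $S^{n-1}/G$, with cone point the image of the origin, and the map $r\mapsto$ (dilation by $r$) identifies $V/G\setminus\{\text{cone pt}\}$ with $(S^{n-1}/G)\times\Ro_{>0}$. Since $V/G\cong\Ro^n$ by the previous step, its one-point compactification is $S^n$ and the complement of the cone point is $\Ro^n\setminus\{0\}\simeq S^{n-1}\times\Ro$; comparing, $S^{n-1}/G$ is a closed manifold that is the link of a point in a space homeomorphic to $\Ro^n$, hence homeomorphic to $S^{n-1}$. (Alternatively, and perhaps cleaner to cite, one invokes the refinement attributed to Lange in the excerpt — that $V/G\cong V$ \emph{and} $S^{n-1}/G\cong S^{n-1}$ hold simultaneously when $G$ is generated by rotations — so that this too is immediate.)

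The only genuine subtlety, and the step I would flag as the potential obstacle, is the passage from ``$V/G$ is homeomorphic to $\Ro^n$'' to ``the link of the singular point is $S^{n-1}$'': a priori a topological $n$-manifold-looking space could have a point whose link is an exotic homotopy sphere rather than $S^{n-1}$, and one must know that $V/G$ is genuinely a manifold at the image of $0$, not merely a homology manifold. This is handled by local homogeneity — near any orbit the quotient looks like $\Ro^k\times(\Ro^{n-k}/G_x)$ for the isotropy $G_x$, which is again a rotation group, so one reduces to the same statement in lower dimension and induces on $n$, the base case $n\le 3$ being classical (and covered by Proposition~\ref{propFiniteActionOnMfds} and its discussion). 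In dimension $3$ this circularity is exactly why the paper remarks that Proposition~\ref{propFiniteActionOnMfds} is an \emph{instance} of Proposition~\ref{propMikhIndex2} rather than conversely; so in writing the proof I would either restrict the inductive base to cases where the quotient is manifestly $S^{n-1}$ (e.g.\ $n\le 2$) or simply quote the PL-manifold refinement of Lange recorded above, which makes both homeomorphisms part of a single package and removes the need to reconstruct the link argument by hand.
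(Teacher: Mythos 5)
Your argument for the first assertion (grouping an even product of reflections into consecutive pairs) is exactly right and matches the paper. For the two homeomorphisms, however, you take a genuinely different --- and much heavier --- route, and your primary argument for $S^{n-1}/G\cong S^{n-1}$ does not close. Deducing the sphere statement from $V/G\cong V$ via ``the link of the cone point must be $S^{n-1}$'' fails in general: by the double suspension phenomenon the open cone on a space $X$ can be homeomorphic to $\Ro^n$ without $X$ being homeomorphic to $S^{n-1}$ (e.g.\ $X=\Sigma P$ for the Poincar\'e homology sphere $P$, $n=5$), so the subtlety you flag is a genuine obstruction, not a technicality. Your inductive repair is also delicate: at a point $x\neq 0$ whose $\Gamma$-stabilizer contains no reflection one has $G_x=\Gamma_x$, and it is not immediate that such a stabilizer is again generated by rotations, so the inductive hypothesis does not obviously apply at every stratum. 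Your fallback --- quoting Lange's theorem that $V/G\cong V$ and $S^{n-1}/G\cong S^{n-1}$ hold simultaneously for rotation groups --- is logically sound, but it reduces the proposition to a citation of the full classification machinery (and your $V/G\cong V$ step likewise just invokes Proposition~\ref{propMikh}), which is precisely what the paper is at pains to avoid for this special case.

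The paper's own proof is elementary and self-contained, which is why it calls this proposition ``easier than'' Proposition~\ref{propMikh}: the reflection group $\Gamma$ acts on $S^{n-1}$ with fundamental domain a spherical polytope $P_+$ (a closed chamber intersected with the sphere), so $S^{n-1}/\Gamma\cong P_+\cong D^{n-1}$; since $G=\Gamma^+$ has index $2$ in $\Gamma$ and the two cosets are swapped by any reflection, the quotient $S^{n-1}/G$ is the double of $P_+$, i.e.\ two copies of a polytope glued along their common boundary, which is manifestly $S^{n-1}$. The statement $V/G\cong V$ then follows by coning off. No appeal to Proposition~\ref{propMikh}, to Lange's classification, or to any manifold-recognition argument is needed; if you want to write an actual proof rather than a citation, this doubling argument is the one to record.
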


This statement is easier than Proposition~\ref{propMikh}: it can be proved as follows. The action of $\Gamma$ on $S^{n-1}$ has fundamental domain $P_+$, some spherical polytope. The quotient $S^{n-1}/G$ can be obtained by attaching two identical copies of $P_+$ along their boundary, which certainly results in a sphere $S^{n-1}$. A somewhat similar statement appeared in the work~\cite{Gug} of the second author, where he considered the finite group action on the product of spheres.

Proposition~\ref{propFiniteActionOnMfds} implies Proposition~\ref{propMisorSpaceIsMfd}, since the multiplication action on $\SO(3)$ from either left or right side preserves its orientation.

Another useful and famous result is Armstrong theorem~\cite{Arm}. We state the theorem for finite groups, which is weaker than the original formulation given by Armstrong.

\begin{prop}[{Armstrong, \cite[Thm.3]{Arm}}]\label{propArmstrong}
Let $X$ be a connected simply connected polyhedron, and suppose a finite group $G$ acts on $X$ by simplicial transformations. Let $H$ be the subgroup of $G$, generated by all $h\in G$ such that the fixed point set $X^h$ is nonempty. Then $\pi_1(X/G)$ is isomorphic to the quotient $G/H$.
\end{prop}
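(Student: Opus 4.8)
The plan is to reduce, by a covering-space argument, to the case in which $G$ is generated by elements having fixed points, and then to settle that case directly by lifting loops to the simply connected space $X$. First I would replace $X$ by its second barycentric subdivision, so that the simplicial $G$-action becomes \emph{regular}: if $g$ maps a simplex to itself, it fixes that simplex pointwise. Then $X/G$ is again a polyhedron, the orbit map $\pi\colon X\to X/G$ is simplicial, the simplices of $X$ lying over a given simplex of $X/G$ form a single $G$-orbit, and edge-paths can be lifted one edge at a time. Next, note that $H\trianglelefteq G$: if $X^h\neq\varnothing$ then $X^{ghg^{-1}}=gX^h\neq\varnothing$, and $H$ is generated by such $h$. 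Hence $G/H$ acts on $X/H$, and this action is free: if $\bar g$ fixes a point $[x]\in X/H$ then $gx=hx$ for some $h\in H$, so $h^{-1}g\in G_x\subseteq H$ (every element of an isotropy group fixes a point, hence lies in $H$), whence $g\in H$. A free action of the finite group $G/H$ on the Hausdorff space $X/H$ makes $X/H\to X/G$ a regular covering with deck group $G/H$; since $X/H$ is connected we obtain an exact sequence
\[
1\longrightarrow\pi_1(X/H)\longrightarrow\pi_1(X/G)\longrightarrow G/H\longrightarrow 1 .
\]
So it remains to prove $\pi_1(X/H)=1$, i.e. the theorem in the special case $G=H$, where $G$ is generated by elements each fixing a point.

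In that special case, write $G=\langle g_1,\dots,g_m\rangle$ with $g_j$ fixing $p_j\in X$, fix a vertex $v_0$ with image $w_0=\pi(v_0)$, and let $\gamma$ be a loop in $X/G$ at $w_0$, represented by an edge-loop. Lift $\gamma$ to an edge-path $\tilde\gamma$ in $X$ starting at $v_0$; its endpoint lies in $\pi^{-1}(w_0)=Gv_0$, so equals $gv_0$ for some $g\in G$. Write $g=g_{j_1}\cdots g_{j_r}$ as a word in the generators, pick arbitrary paths $\beta_k$ from $v_0$ to $p_{j_k}$, and form $\rho_k=\beta_k\ast\overline{g_{j_k}\beta_k}$, a path from $v_0$ to $g_{j_k}v_0$ that runs out to the fixed point $p_{j_k}$ and returns along the $g_{j_k}$-translate. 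Since $g_{j_k}$ fixes $p_{j_k}$, the two halves of $\rho_k$ have the same $\pi$-image, so $\pi\rho_k$ is null-homotopic. Now set
\[
\tau=\rho_1\ast(g_{j_1}\rho_2)\ast(g_{j_1}g_{j_2}\rho_3)\ast\cdots\ast\bigl(g_{j_1}\cdots g_{j_{r-1}}\rho_r\bigr),
\]
a path in $X$ from $v_0$ to $gv_0$. By $G$-invariance of $\pi$ we have $\pi\tau=\pi\rho_1\ast\cdots\ast\pi\rho_r$, a concatenation of null-homotopic loops, hence null-homotopic in $X/G$. Finally $\tilde\gamma$ and $\tau$ have the same endpoints and $X$ is simply connected, so they are homotopic rel endpoints in $X$; projecting by $\pi$ gives $\gamma=\pi\tilde\gamma\simeq\pi\tau\simeq\const$. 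Thus every loop in $X/G$ is null-homotopic, $\pi_1(X/G)=1$, and the theorem follows.

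The conceptual heart is the construction of $\tau$: it trades the ``defect'' $g\in H$ for a product of null-homotopic loops, and this is exactly where the simple-connectivity of $X$ enters — through the freedom to replace the lift $\tilde\gamma$ by any other path with the same endpoints. The main obstacle is therefore not deep topology but keeping the ``simplicial orbit map'' bookkeeping honest: checking that after subdivision $X/G$ is genuinely a simplicial complex and that $\pi$ has the one-orbit-over-each-simplex property so that edge-paths lift, verifying that a free finite action produces a covering with the stated deck group, and making sure the loop and path homotopies are carried out within the polyhedral category. For the concrete spaces relevant here (spheres triangulated by fundamental domains of point crystallography groups) all of these points are routine.
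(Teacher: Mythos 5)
Your proof is correct. Note that the paper offers no proof of this proposition at all --- it is quoted verbatim from Armstrong's 1965 paper --- so there is nothing in the text to compare against; your argument is essentially the classical one. The two-step structure (pass to the normal subgroup $H$ generated by the elements with fixed points, observe that $G/H$ acts freely on $X/H$ so that $X/H\to X/G$ is a regular covering with deck group $G/H$, then kill $\pi_1(X/H)$ by exhibiting, for each generator with a fixed point, the path $\rho_k=\beta_k\ast\overline{g_{j_k}\beta_k}$ whose projection is a path followed by its own reverse) is exactly how the theorem is standardly proved. All the delicate points are correctly identified and handled: normality of $H$ via conjugation-invariance of the generating set, the inclusion $G_x\subseteq H$ needed for freeness of the residual action, the fact that a free action of a finite group on a Hausdorff space yields a covering, regularity of the action after the second barycentric subdivision, and edge-path lifting along the (non-covering) orbit map via the one-orbit-over-each-simplex property. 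The only thing worth making explicit is that the special case is applied to $H$ acting on $X$, which is legitimate because every $h\in G$ with $X^h\neq\varnothing$ already lies in $H$, so $H$ is indeed generated by its own elements having fixed points.
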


\begin{cor}\label{corPi1}
Assume that a finite group $G$ acts on $S^3$ preserving the orientation. Then $\pi_1(S^3/G)\cong G/H$, where $H$ is the subgroup of $G$ generated by the union of all stabilizers $G_x$, $x\in S^3$, of the $G$-action.
\end{cor}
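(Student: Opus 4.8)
The plan is to obtain this as a direct application of Armstrong's theorem (Proposition~\ref{propArmstrong}) with $X=S^3$, once the action has been put in simplicial form and the relevant subgroup identified.

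\emph{Step 1: reduction to a simplicial action.} In every situation where the corollary is applied the action of $G$ on $S^3$ is smooth (indeed, in our applications it is the restriction of a linear action of a finite subgroup of $\SO(4)$). By a standard equivariant triangulation theorem (Illman), a smooth action of a finite group on a compact smooth manifold can be made simplicial: there is a triangulation of $S^3$ invariant under $G$ with respect to which $G$ acts by simplicial automorphisms. Passing to the second barycentric subdivision---a canonical, hence $G$-invariant, operation---we may in addition assume the action is regular, i.e. if $g\in G$ carries a simplex onto itself then it fixes that simplex pointwise; this is the form of simplicial action to which Proposition~\ref{propArmstrong} applies. (For linear actions, which is all we need here, a $G$-invariant geodesic triangulation of the round sphere can be written down by hand.) The polyhedron $S^3$ is connected and simply connected, so all hypotheses of Armstrong's theorem are met.

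\emph{Step 2: apply Armstrong and identify $H$.} Proposition~\ref{propArmstrong} then gives $\pi_1(S^3/G)\cong G/H$, where $H$ is the subgroup of $G$ generated by the set $S=\{h\in G : (S^3)^h\neq\emptyset\}$. Now $h\in S$ means precisely that $h$ fixes some point $x\in S^3$, i.e. $h\in G_x$ for that $x$; conversely every element of every stabilizer $G_x$ lies in $S$. Hence $S=\bigcup_{x\in S^3}G_x$, so $H$ coincides with the subgroup generated by the union of all point stabilizers of the $G$-action, and $\pi_1(S^3/G)\cong G/H$ as claimed.

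The only step with genuine content is Step~1, the passage to an invariant triangulation; granting that, the corollary is an immediate specialization of Armstrong's theorem, the identification of the two descriptions of $H$ being a tautology. It is worth noting that orientation-preservation plays no role in this argument---Armstrong's theorem has no such hypothesis---and is recorded in the statement only for uniformity with the neighbouring results (such as Proposition~\ref{propMisorSpaceIsMfd}), where it is genuinely used.
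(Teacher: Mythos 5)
Your proof is correct and follows the same route the paper intends: the corollary is presented there as an immediate specialization of Armstrong's theorem, with the passage to a $G$-invariant triangulation left tacit, and your Step 1 supplies the standard justification for that step. Your closing observation that orientation-preservation is not actually needed for the $\pi_1$ computation (only for the quotient to be a manifold, as in Proposition~\ref{propFiniteActionOnMfds}) is also accurate.
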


\begin{rem}\label{remNormalAutomatically}
Note that the subgroup $H$ generated by all stabilizers is normal. Indeed, if $g\in G_x$, then $hgh^{-1}\in G_{hx}$.
\end{rem}

%
%

%

\subsection{Misorientation spaces and double covers}

Recall that, for a pair of discrete subgroups $G_1,G_2\subset \SO(3)$, the misorientation space $X(G_1,G_2)=\sit{G_1}\backslash \jump{\SO(3)}/\sit{G_2}$ is defined. Everything can be lifted to the 3-sphere $S^3\cong\Sp(1)$ which forms the universal double cover over $\SO(3)$. We refer to~\cite{DuVal} for the exposition of the relation between quaternions, orthogonal transformations and finite groups. Let $2G_1$ and $2G_2$ be the preimages of the subgroups $G_1$ and $G_2$ of $\SO(3)$ respectively under the double cover $\dc\colon S^3\to\SO(3)$. We have
\[
X(G_1,G_2)=\sit{2G_1}\backslash \jump{S^3}/\sit{2G_2}=S^3/(2G_1\times 2G_2),
\]
where the left action of $2G_1\times 2G_2$ on $S^3$ is given by
\begin{equation}\label{eqTwoSided}
(g_1,g_2)x=g_1xg_2^{-1}.
\end{equation}

Conversely, given two discrete subgroups $\widetilde{G}_1,\widetilde{G}_2\subset S^3$ we can consider the two-sided quotient $\sit{\widetilde{G}_1}\backslash \jump{S^3}/\sit{\widetilde{G}_2}$. It coincides with some misorientation space if at least one of the subgroups $\widetilde{G}_1$ or $\widetilde{G}_2$ contains $-1\in S^3$.

\begin{con}\label{conSO4}
Generally, there is a representation of $\Sp(1)\times\Sp(1)$ on $\Ho\cong\Ro^4$ given by~\eqref{eqTwoSided}. The element $(-1,-1)\in \Sp(1)\times \Sp(1)$ generates the noneffective kernel of this representation. Every orthogonal transformation of $\Ro^4$ is written as~\eqref{eqTwoSided} for some $g_1,g_2\in\Sp(1)$, see e.g.~\cite[Sect.17]{DuVal}. These facts prove the well-known isomorphism of the Lie groups: $\SO(4)\cong (\Sp(1)\times\Sp(1))/\langle(-1,-1)\rangle$.

In general, if $\widetilde{G}_1,\widetilde{G}_2$ are subgroups of $\Sp(1)$, we consider the subgroup $\widetilde{G}_1\htimes \widetilde{G}_2$ of $\SO(4)$ defined by
\begin{equation}\label{eqHtimes}
\widetilde{G}_1\htimes \widetilde{G}_2\eqd (\widetilde{G}_1\times \widetilde{G}_2)/\langle(-1,-1)\rangle\subseteq (\Sp(1)\times \Sp(1))/\langle(-1,-1)\rangle\cong\SO(4).
\end{equation}
With this notation in mind, the misorientation space $X(G_1,G_2)$ coincides with the quotient space of $S^3\subset\Ro^4$ by the (possibly non-free) action of the discrete subgroup $2G_1\htimes2G_2\subset\SO(4)$.
\end{con}

\begin{rem}
Proposition~\ref{propFiniteActionOnMfds} and Corollary \ref{corPi1} imply that any misorientation space is a closed 3-manifold with finite fundamental group. Due to elliptization conjecture~\cite{Lott}, this means that any misorientation space is homeomorphic to a spherical 3-manifold.

By definition, a spherical 3-manifold is a quotient of the round sphere $S^3\subset \Ro^4$ by a finite subgroup of $\SO(4)$ acting freely on $S^3$. On the other hand, $\SO(4)\cong (\Sp(1)\times\Sp(1))/\{\pm1\}$, so far any finite group action on $S^3$ can be modeled by two-sided action in quaternionic form $x\mapsto g_1xg_2^{-1}$ for some discrete subgroup $G\subset \Sp(1)\times \Sp(1)$ containing $(-1,-1)$. The classification of spherical 3-manifolds \cite[Thm.4.4.14]{Thurs} is based on the classification of finite subgroups $G$ in $\Sp(1)\times \Sp(1)$ such that $G/\{\pm1\}$ acts freely on $S^3$. For a subgroup $G$ of the form $G_1\times G_2$ acting freely on $S^3$, we can see that, conversely, the spherical 3-manifold is the misorientation space $X(G_1,G_2)$.
\end{rem}

\begin{rem}
For convenience, the most important discrete subgroups of $\SO(3)$ together with their common notations in crystallography and the corresponding crystal classes are listed in Table~\ref{tableSubgroupsTable}. We also write their binary extensions in the sphere $\Sp(1)\cong S^3$, by listing the elements of these extensions. The list of groups coincides with the one studied in \cite{PatSch} (i.e. 11 proper point crystallography groups), however we added one more group, the icosahedral group. This group may not have meaning in crystallography, however it is interesting on its own.
\end{rem}

\begin{table}[p]
	\centering
	\rotatebox{90}{
		\begin{minipage}{\textheight}
\begin{tabular}{p{1cm}||p{2cm}|p{4cm}|p{3cm}|p{2cm}|p{6cm}|}
$G$& Common notation & Generated by& Corresponds to crystal classes (Sch\"{o}nflies notation)
& Binary extension $2G$ & Elements of $2G\subset\Sp(1)$\\
\hline
\hline
$C_1$ & $C_1$ & trivial group & $C_1$, $S_2$, $C_{1h}$ & $C_2\cong \Zo_2$ & $\pm1$\\
\hline
$C_2$ & $C_2(2)$ & 2-rotation & $C_2$, $C_{2h}$, $C_{2v}$, $S_4$ & $C_4\cong \Zo_4$ & $\pm1$, $\pm i$\\
\hline
$C_3$ & $C_3(3)$ & 3-rotation & $C_3$, $C_{3i}$, $C_{3v}$, $C_{3h}$ & $C_6\cong \Zo_6$ & $\pm1$, $\pm\frac{1}{2}\pm\frac{\sqrt{3}}{2}i$\\
\hline
$C_4$ & $C_4(4)$ & 4-rotation & $C_4$, $C_{4h}$, $C_{4v}$ & $C_8\cong \Zo_8$ & $\pm1$, $\pm i$, $\pm\frac{\sqrt{2}}{2}\pm\frac{\sqrt{2}}{2}i$\\
\hline
$C_6$ & $C_6(6)$ & 6-rotation & $C_6$, $C_{6h}$, $C_{6v}$ & $C_{12}\cong \Zo_{12}$ & $\pm1$, $\pm i$, $\pm\frac{1}{2}\pm\frac{\sqrt{3}}{2}i$, $\pm\frac{\sqrt{3}}{2}\pm\frac{1}{2}i$\\
\hline
$D_3$ & $D_3(32)$ & 3,2-rotations in two orthogonal axes & $D_3$, $D_{3d}$, $D_{3h}$ & $\Dic_3$ & $\pm1$, $\pm\frac{1}{2}\pm\frac{\sqrt{3}}{2}i$, $\pm j$, $\pm\frac{1}{2}j\pm\frac{\sqrt{3}}{2}k$\\
\hline
$D_2$ & $D_2(222)$ & 2-rotations in three orthogonal axes & $D_2$, $D_{2h}$, $D_{2d}$ & $2D_2=Q_8$ & $\pm1$, $\pm i$, $\pm j$, $\pm k$\\
\hline
$D_4$ & $D_4(422)$ & 4,2,2-rotations in three orthogonal axes & $D_4$, $D_{4h}$ & $2D_4$ & $\pm1$, $\pm i$, $\pm j$, $\pm k$, $\pm\frac{\sqrt{2}}{2}\pm\frac{\sqrt{2}}{2}i$, $\pm\frac{\sqrt{2}}{2}j\pm\frac{\sqrt{2}}{2}k$\\
\hline
$D_6$ & $D_6(622)$ & 6,2,2-rotations in three orthogonal axes & $D_6$, $D_{6h}$ & $2D_6$ & $\pm1$, $\pm i$, $\pm j$, $\pm k$,
$\pm\frac{1}{2}\pm\frac{\sqrt{3}}{2}i$, $\pm\frac{1}{2}j\pm\frac{\sqrt{3}}{2}k$, $\pm\frac{\sqrt{3}}{2}\pm\frac{1}{2}i$, $\pm\frac{\sqrt{3}}{2}j\pm\frac{1}{2}k$\\
\hline
$T$ & $T(23)$ & Symmetries of a regular tetrahedron & $T$, $T_h$, $T_d$ & $2T$ & $\pm1$, $\pm i$, $\pm j$, $\pm k$, $\frac{1}{2}(\pm1\pm i\pm j\pm k)$\\
\hline
$O$ & $O(432)$ & Symmetries of a regular octahedron & $O$, $O_h$ & $2O$ & $\pm1$, $\pm i$, $\pm j$, $\pm k$, $\frac{1}{2}(\pm1\pm i\pm j\pm k)$, and all permutations of coordinates in  $\pm\frac{\sqrt{2}}{2}\pm\frac{\sqrt{2}}{2}i$\\
\hline
$I$ & $I(532)$ & Symmetries of a regular icosahedron & none & $2I$ & $\pm1$, $\pm i$, $\pm j$, $\pm k$, $\frac{1}{2}(\pm1\pm i\pm j\pm k)$, and even permutations of coordinates in $(\pm i\pm \varphi j\pm\varphi^{-1}k)/2$, where $\varphi=\frac{1+\sqrt{5}}{2}$.\\
\hline
\end{tabular}
 \caption{Subgroups of $\SO(3)$, relevant to our study, and their binary extensions}\label{tableSubgroupsTable}
\end{minipage}
}
\end{table}

\begin{table}[h]
	\centering
\begin{tabular}{c||c|c|c|c|c|c|c|c|c|c|c|}
$\pi_1$& $C_2$ & $C_3$ & $C_4$ & $C_6$ & $D_3$ & $D_2$ & $D_4$ & $D_6$ & $T$ & $O$ & $I$ \\
\hline
\hline
$C_2$ & $\Zo_2$ & $\Zo_{12}$ & $\Zo_4$ & $\Zo_6$ & $\Zo_2$ & $1$ & $1$ & $1$ & $\Zo_3$ & $1$ & $1$ \\
\hline
$C_3$ && $\Zo_2$ & $\Zo_{24}$ & $\Zo_4$ & $\Zo_4$ & $\Zo_3\times Q_8$ & $\Zo_3\times2D_4$ & $Q_8$ & $1$ & $\Zo_2$ & $1$ \\
\hline
$C_4$ &&& $\Zo_2$ & $\Zo_{12}$ & $\Zo_4$ & $\Zo_2$ & $1$ & $\Zo_2$ & $\Zo_6$ & $1$ & $\Zo_2$ \\
\hline
$C_6$ &&&& $\Zo_2$ & $\Zo_2$ & $\Zo_3$ & $\Zo_3$ & $1$ & $1$ & $1$ & $1$ \\
\hline
$D_3$ &&&&& $\Zo_2$ & $1$ & $1$ & $1$ & $1$ & $1$ & $1$ \\
\hline
$D_2$ &&&&&& $1$ & $1$ & $1$ & $\Zo_3$ & $1$ & $1$ \\
\hline
$D_4$ &&&&&&& $1$ & $1$ & $\Zo_3$ & $1$ & $1$ \\
\hline
$D_6$ &&&&&&&& $1$ & $1$ & $1$ & $1$ \\
\hline
$T$ &&&&&&&&& $1$ & $1$ & $1$ \\
\hline
$O$ &&&&&&&&&& $1$ & $1$ \\
\hline
$I$ &&&&&&&&&&& $1$ \\
\hline
\end{tabular}
 \caption{The fundamental groups of misorientation spaces: $\pi_1(X(G_1,G_2))\cong (2G_1\times 2G_2)/H$}\label{tableFundGrps}
\end{table}

Now we compute the fundamental groups of misorientation spaces using Armstrong theorem.

\begin{thm}\label{thmFundGrps}
The fundamental groups $\pi_1(X(G_1,G_2))$, for different choices of $G_1,G_2\subset \SO(3)$, are as listed in Table \ref{tableFundGrps}.
\end{thm}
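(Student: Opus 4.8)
The plan is to apply Corollary~\ref{corPi1} to the action \eqref{eqTwoSided} of $G=2G_1\times 2G_2$ on $S^3\cong\Sp(1)$; it is orientation preserving since it factors through $\SO(4)$ (Construction~\ref{conSO4}), and $S^3/G=X(G_1,G_2)$. The first step is to identify the stabilizers. A pair $(g_1,g_2)$ fixes $x$ iff $g_1xg_2^{-1}=x$, i.e. $g_1=xg_2x^{-1}$, so
\[
G_x=\{\,(xgx^{-1},\,g)\ :\ g\in 2G_2,\ xgx^{-1}\in 2G_1\,\},
\]
which is carried isomorphically onto $2G_2\cap x^{-1}(2G_1)x$ by the second projection. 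As $x$ runs over $\Sp(1)$, the element $xgx^{-1}$ runs over the whole conjugacy class of $g$ in $\Sp(1)$, and these classes are exactly the level sets $\{q\in\Sp(1):\HRe q=c\}$ of the real part (singletons for $c=\pm1$, round $2$-spheres otherwise). Hence the subgroup $H$ generated by all stabilizers is
\[
H=H(G_1,G_2):=\big\langle\,(g_1,g_2)\ :\ g_1\in 2G_1,\ g_2\in 2G_2,\ \HRe g_1=\HRe g_2\,\big\rangle ,
\]
and Corollary~\ref{corPi1} gives $\pi_1(X(G_1,G_2))\cong (2G_1\times 2G_2)/H(G_1,G_2)$. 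It remains to evaluate this quotient for each entry of Table~\ref{tableFundGrps}.

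Several observations organize the evaluation. The matching $\HRe g_1=\HRe g_2=\pm1$ only produces $(1,1)$ and $(-1,-1)$, so $(-1,-1)\in H$ always, as it must be, being the kernel of \eqref{eqTwoSided}. For $G_1=G_2=G$ the diagonal $\{(g,g):g\in 2G\}\cong 2G$ lies in $H(G,G)$, and if $G_1\subseteq G_2$ then $2G_1\subseteq 2G_2$ and all diagonal pairs $(g,g)$, $g\in 2G_1$, lie in $H$. Two elements of $\Sp(1)$ have equal real part precisely when they project to rotations of $\Ro^3$ through the same angle; reading off Table~\ref{tableSubgroupsTable}, the real parts realized by $2G$ are: always $\pm1$; the value $0$ iff $G$ has a $2$-rotation; $\pm\tfrac12$ iff $G$ has a $3$-rotation; $\pm\tfrac{\sqrt2}{2}$ iff $G$ has a $4$-rotation; $\pm\tfrac{\sqrt3}{2}$ iff $G$ has a $6$-rotation; and the further values $\pm\varphi/2,\pm\varphi^{-1}/2$ only for $G=I$. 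In particular, when $2G_1$ and $2G_2$ share no real part beyond $\pm1$ --- precisely the cases $X(C_3,C_2),X(C_3,C_4),X(C_3,D_2),X(C_3,D_4)$, since $2C_3$ realizes only $\pm1,\pm\tfrac12$ --- we get $H=\langle(-1,-1)\rangle$ and $\pi_1=(2G_1\times 2G_2)/\langle(-1,-1)\rangle$, which, using $2C_3\cong\Zo_2\times\Zo_3$ and the elementary identity $(\Zo_2\times A)/\langle(1,z)\rangle\cong A$ for a central involution $z\in A$, yields the four entries $\Zo_{12},\Zo_{24},\Zo_3\times Q_8,\Zo_3\times 2D_4$ of that row.

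The remaining entries follow the same recipe. For the cyclic pairs $X(C_m,C_n)$ all relevant quaternions lie on the circle $\{e^{i\theta}\}$, and $e^{ik\pi/m},e^{il\pi/n}$ have equal real part iff they are equal or complex conjugate; the resulting quotients of $\Zo_{2m}\times\Zo_{2n}$ are routine and reproduce the lens spaces of Table~\ref{tableMisSpaces} (e.g. for $X(C_2,C_2)$ the real-part-$0$ matchings give $(1,-1)$ and $(i,i)$ in $H$, whence $|H|=8$ and $\pi_1=\Zo_2$). For a pair involving a binary dihedral or binary polyhedral group one lists the finitely many matching pairs of quaternions from Table~\ref{tableSubgroupsTable} and takes a few products $(g_1,g_2)(g_1',g_2')^{-1}$ to exhibit the ``horizontal'' and ``vertical'' subgroups $K\times 1$, $1\times K'$ contained in $H$, after which the quotient is read off; here one uses standard facts such as: $Q_8$ is generated by its order-$4$ elements; the order-$3$ and order-$6$ elements of $2T$ generate $2T$, while those of $2O$ generate only $2T$ (since $3$-cycles generate $A_4\subsetneq S_4$), and those of $2I$ generate $2I$ ($A_5$ is generated by $3$-cycles). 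For instance in $X(D_2,D_2)$ the order-$4$ matchings force $Q_8\times 1,\ 1\times Q_8\subseteq H$, hence $H=Q_8\times Q_8$ and $\pi_1=1$; in $X(C_3,D_6)$, writing $\omega=\tfrac12+\tfrac{\sqrt3}{2}i$, the only matchings occur at real parts $\pm\tfrac12$ and give $H=\langle(\omega^2,1),(1,\omega^2),(-1,-1)\rangle\cong\Zo_3\times\Zo_3\times\Zo_2$ of order $18$, so that, since $\langle\omega^2\rangle\trianglelefteq 2D_6$ with $2D_6/\langle\omega^2\rangle\cong Q_8$, we get $\pi_1\cong(\Zo_2\times Q_8)/\langle(1,-1)\rangle\cong Q_8$.

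The conceptual content is entirely in the reduction $\pi_1(X(G_1,G_2))=(2G_1\times 2G_2)/H(G_1,G_2)$ with $H(G_1,G_2)$ as above; the rest is a finite but sizeable bookkeeping. The points that require care are: handling the larger groups $2D_6$, $2O$, $2I$, where one must check that the matching pairs are exactly those of equal real part and correctly recognize the subgroup they generate (which reduces to the standard generation facts about the abstract groups quoted above); and the handful of entries where $\pi_1$ is nonabelian or noncyclic ($\Zo_3\times Q_8$, $\Zo_3\times 2D_4$, $Q_8$), where one must pin down the isomorphism type of the quotient and not merely its order --- done uniformly through the identity $(\Zo_2\times A)/\langle(1,z)\rangle\cong A$.
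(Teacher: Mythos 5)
Your proposal is correct and follows essentially the same route as the paper: Armstrong's theorem (Corollary~\ref{corPi1}) reduces everything to the normal subgroup generated by pairs $(g_1,g_2)$ lying in the same $\Sp(1)$-conjugacy class, which Lemma~\ref{lemConjugacyCriterion} identifies with the equal-real-part condition, followed by case-by-case bookkeeping via the "horizontal'' and "vertical'' subgroups (the paper's $H_1$, $H_2$ of Construction~\ref{conH12defin} and Tables~\ref{tableSubgroupsH}--\ref{tableAuxiliary}). Your sample computations ($X(C_2,C_2)$, $X(D_2,D_2)$, $X(C_3,D_6)$, the $C_3$ row) check out against Table~\ref{tableFundGrps}.
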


\begin{rem}
There holds $X(G_1,G_2)\cong X(G_2,G_1)$, with the homeomorphism given by $[A]\mapsto [A^{-1}]$. Therefore Tables~\ref{tableFundGrps} and~\ref{tableMisSpaces} can be filled symmetrically below the diagonal.
\end{rem}

For each group $G_i$ we write the corresponding elements of its binary extension $2G_i\subset \Sp(1)$ as the set of quaternions of unit length (see the last column of Table \ref{tableSubgroupsTable}). Then we use Corollary~\ref{corPi1} to compute $\pi_1(X(G_1,G_2))=\pi_1(S^3/(2G_1\times 2G_2))$. The element $(g_1,g_2)\in 2G_1\times 2G_2$ stabilizes a quaternion $x\in S^3$ if and only if $g_1x=xg_2$. In other words, $g_1,g_2$ should lie in the same conjugacy class in $\Sp(1)\subset\Ho$. The following lemma is simple but quite useful in the computations.

\begin{lem}\label{lemConjugacyCriterion}
Two quaternions $g_1,g_2$ of unit length lie in the same conjugacy class in $S^3\subset\Ho$ if and only if $\HRe g_1=\HRe g_2$.
\end{lem}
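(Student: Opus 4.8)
The plan is to identify a unit quaternion $g$ with its decomposition $g=\HRe(g)+\imm(g)$ into scalar and vector parts, so that $\HRe(g)\in\Ro$ and $\imm(g)$ lies in the $3$-dimensional real subspace $\imm\Ho$ of purely imaginary quaternions, and then to exploit the fact recalled above (see \cite{DuVal}) that conjugation by unit quaternions acts on $\imm\Ho\cong\Ro^3$ through the double cover $\dc\colon\Sp(1)\to\SO(3)$.

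For the ``only if'' direction, I would use that $q^{-1}=\bar q$ for $q\in\Sp(1)$. For any $g\in\Ho$ one computes $2\HRe(qgq^{-1})=qgq^{-1}+\overline{qgq^{-1}}=qg\bar q+q\bar g\bar q=q(g+\bar g)\bar q=2\HRe(g)\,q\bar q=2\HRe(g)$, because $g+\bar g=2\HRe(g)$ is a real scalar and $q\bar q=1$. Hence $\HRe$ is invariant under conjugation in $\Ho$, and in particular two conjugate unit quaternions have equal real parts.

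For the ``if'' direction, suppose $g_1,g_2\in\Sp(1)$ satisfy $\HRe g_1=\HRe g_2=:c$; since $|g_i|=1$ this forces $|\imm(g_i)|=\sqrt{1-c^2}$ for $i=1,2$. If $c=\pm1$ then $g_1=g_2=c$ and there is nothing to prove, so assume $r:=\sqrt{1-c^2}>0$ and set $u_i:=\imm(g_i)/r\in\imm\Ho$, a unit vector. As $\SO(3)$ acts transitively on the unit sphere $S^2\subset\imm\Ho$, I choose a rotation carrying $u_1$ to $u_2$ and lift it along $\dc$ to some $q\in\Sp(1)$ with $qu_1\bar q=u_2$. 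Then $qg_1\bar q=q(c+ru_1)\bar q=c\,q\bar q+r\,qu_1\bar q=c+ru_2=g_2$, so $g_1$ and $g_2$ are conjugate in $\Sp(1)$.

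The argument is elementary and I do not anticipate a genuine obstacle; the only points that need a little care are the degenerate cases $g=\pm1$ and the appeal to transitivity of the conjugation action on the unit sphere of $\imm\Ho$, which is precisely the surjectivity of $\dc\colon\Sp(1)\to\SO(3)$ already recalled in this subsection.
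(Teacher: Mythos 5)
Your proof is correct and follows essentially the same route as the paper's: decompose a unit quaternion into real and imaginary parts, observe that conjugation fixes the real part, and use that conjugation acts on $\imm\Ho\cong\Ro^3$ transitively on spheres of a given radius. Your version is slightly more explicit (the computation via $g+\bar g=2\HRe(g)$, the degenerate case $g=\pm1$, and the appeal to surjectivity of $\dc\colon\Sp(1)\to\SO(3)$), but there is no substantive difference.
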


\begin{proof}
Assume $g_2=x^{-1}g_1x$ for some $x\in \Sp(1)$. Writing each $g_i$ as $\HRe g_i+\HIm g_i$, we see
\[
\HRe g_2+\HIm g_2 = x^{-1}(\HRe g_1+\HIm g_1)x=x^{-1}x\HRe g_1+ x^{-1}\HIm g_1 x = \HRe g_1+ x^{-1}\HIm g_1 x
\]
since the conjugation preserves the set of purely imaginary quaternions. This proves the only if part. On the contrary, if $\HRe g_1=\HRe g_2$, then $\|g_1\|=\|g_2\|$ implies $\|\HIm g_1\|=\|\HIm g_2\|$. The conjugation action acts by orthogonal transformations on purely imaginary quaternions. There exist orthogonal transformations which takes a vector $\HIm g_1\in \Ro^3$ to any other vector $\HIm g_2\in\Ro^3$ of the same length. This implies the if part.
\end{proof}

We give a recipe how to find the subgroup $H$, normally generated by all pairs $(g_1,g_2)\in 2G_1\times 2G_2$, for which $g_1$ and $g_2$ lie in the same conjugacy class in $\Sp(1)$. This will give the answer $\pi_1(X(G_1,G_2))\cong (2G_1\times 2G_2)/H$. The strategy is the following: we consider the subgroups $\wt{H}_1=\{g\in 2G_1\mid (g,e)\in H\}$ and $\wt{H}_2=\{g\in 2G_2\mid (e,g)\in H\}$ and construct certain subgroups $H_1\subseteq \wt{H}_1$ and $H_2\subseteq \wt{H}_2$, see Construction~\ref{conH12defin} below. Then, obviously, $H\supseteq \wt{H}_1\times \wt{H}_2$. For many interesting cases, it can be directly proved that $H_1=G_1$ and $H_2=G_2$, which already implies $\pi_1(X(G_1,G_2))=1$. In the remaining cases we describe the quotient groups $2G_1/H_1$ and $2G_2/H_2$ and their generators. We have an epimorphism
\[
(2G_1/H_1)\times(2G_2/H_2)\twoheadrightarrow (2G_1\times 2G_2)/H\cong \pi_1(X(G_1,G_2)).
\]
The kernel of this epimorphism is then described by analysing which pairs of elements' representatives of $(2G_1/H_1)\times(2G_2/H_2)$ lie in $H$.

\begin{lem}\label{lemProduct}
Assume that $g_1\in 2G_1$ is represented as the product $g_1'g_1''$, where $\HRe g_1'=\HRe g_1''$ and there exists $g_2\in G_2$ such that $\HRe g_2=\HRe g_1'$. Then $g_1\in \wt{H}_1$.
\end{lem}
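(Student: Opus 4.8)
\section*{Proof proposal}

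The plan is to exhibit $(g_1,e)$ as a product of two \emph{stabilizing pairs}: pairs of elements of $2G_1\times 2G_2$ whose two components lie in the same conjugacy class of $\Sp(1)$. By Lemma~\ref{lemConjugacyCriterion} and Corollary~\ref{corPi1}, such a pair fixes a point of the two-sided action on $S^3$, hence lies in $H$ by the definition of $H$; once $(g_1,e)\in H$ we conclude $g_1\in\wt{H}_1$ directly from the definition of $\wt{H}_1$.

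First I would unpack the hypotheses at the level of real parts. From $\HRe g_1'=\HRe g_1''$ and $\HRe g_2=\HRe g_1'$ we get that $g_1'$, $g_1''$ and $g_2$ all share one and the same real part. Moreover, for a unit quaternion $q$ one has $q^{-1}=\bar q$, so $\HRe g_2^{-1}=\HRe g_2$ as well. Therefore, by Lemma~\ref{lemConjugacyCriterion}, $g_1'$ is conjugate to $g_2$ in $\Sp(1)$ and $g_1''$ is conjugate to $g_2^{-1}$ in $\Sp(1)$.

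Hence both $(g_1',g_2)$ and $(g_1'',g_2^{-1})$ are stabilizing pairs, so both belong to $H$. Multiplying them inside $2G_1\times 2G_2$ gives
\[
(g_1',g_2)\cdot(g_1'',g_2^{-1})=\bigl(g_1'g_1'',\,g_2g_2^{-1}\bigr)=(g_1,e),
\]
using $g_1=g_1'g_1''$ in the first coordinate. Thus $(g_1,e)\in H$, i.e. $g_1\in\wt{H}_1$, which is the assertion.

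I do not expect a genuine obstacle here; the proof is a one-line computation once the pairs are chosen correctly. The only place calling for a moment's care is the choice of $(g_1'',g_2^{-1})$ rather than $(g_1'',g_2)$ as the second factor: replacing $g_2$ by $g_2^{-1}$ is what forces the two second coordinates to cancel, and it is harmless because inversion of unit quaternions preserves the real part, so $(g_1'',g_2^{-1})$ remains a stabilizing pair.
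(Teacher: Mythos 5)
Your proof is correct and coincides with the paper's own argument: both decompose $(g_1,e)$ as $(g_1',g_2)\cdot(g_1'',g_2^{-1})$, use $\HRe g_2^{-1}=\HRe\overline{g_2}=\HRe g_2$ together with Lemma~\ref{lemConjugacyCriterion} to place each factor in $H$, and conclude $g_1\in\wt{H}_1$. No differences worth noting.
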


\begin{proof}
We have $(g_1',g_2)\in H$ by Lemma \ref{lemConjugacyCriterion}. Now note that $\HRe g_2^{-1}=\HRe \overline{g_2}=\HRe g_2$. Again, by Lemma \ref{lemConjugacyCriterion} we have $(g_1'',g_2^{-1})\in H$. Multiplying $(g_1',g_2)$ by $(g_1'',g_2^{-1})$ we get $(g_1'g_1'',e)\in H$, which means $g_1=g_1'g_1''\in \wt{H}_1$.
\end{proof}

\begin{con}\label{conH12defin}
Let $H_1$ be the subgroup of $2G_1$ normally generated by all possible products $g_1'g_1''$ for which there exists $g_2\in G_2$ such that $\HRe g_1'=\HRe g_1''=\HRe g_2$. According to Lemma~\ref{lemProduct} we have $H_1\subseteq\wt{H}_1$. The subgroup $H_2\subseteq \wt{H}_2\subseteq G_2$ is defined in a similar way.
\end{con}

For each group $G_1$ taken from Table~\ref{tableSubgroupsTable}, we describe the possibilities for the group $H_1$ depending on the structure of $G_2$, see Table~\ref{tableSubgroupsH}. The content of Table~\ref{tableSubgroupsH} is a result of direct computation in all cases: we check all possible real parts of elements of the binary extension $2G_1$, and consider the group $H_1$ normally generated by pairwise products of elements with the given real part.

\begin{rem}
We make a few remarks. First, all binary extensions of subgroups $G_2\subset \SO(3)$ contain the element $-1$. Hence if there exists $g_2\in 2G_2$ with $\HRe g_2=a$ then there exists $g_2\in 2G_2$ with $\HRe g_2=-a$, so the sign in the conditions of Table~\ref{tableSubgroupsH} is irrelevant. Second, it is a straightforward check that an element $g\in\Sp(1)$ acts on $\Ro^3$ by axial rotation with angle $\varphi$ such that $\cos\varphi=\HRe g$. Therefore the condition $\exists g_2\in 2G_2\colon \HRe g_2=0$ is equivalent to saying that $G_2$ contains a 2-rotation. Similarly, we have
\begin{gather*}
\exists g_2\in 2G_2\colon \HRe g_2=\frac{1}{2} \Leftrightarrow G_2 \mbox{ contains a 3-rotation}. \\
\exists g_2\in 2G_2\colon \HRe g_2=\frac{\sqrt{2}}{2} \Leftrightarrow G_2 \mbox{ contains a 4-rotation}. \\
\exists g_2\in 2G_2\colon \HRe g_2=\frac{\sqrt{3}}{2} \Leftrightarrow G_2 \mbox{ contains a 6-rotation}. \\
\exists g_2\in 2G_2\colon \HRe g_2=\frac{\varphi}{2} \mbox{ or } \frac{\varphi^{-1}}{2} \Leftrightarrow G_2 \mbox{ contains a 5-rotation}.
\end{gather*}
In the latter case, $\varphi$ is the golden ratio. This case, however, does not appear in Table~\ref{tableSubgroupsH} among the conditions: the real part $\frac{\varphi}{2}$ can only appear in the case $G_2=I$, but in this case the group contains a rotation of order $2$, and this already gives the answer.
\end{rem}

\begin{table}[h!]
	\centering
	\begin{tabular}{c||p{8cm}|p{4cm}|}
$G_1$ 
& The subgroup $H_1$ depending on $G_2$ & $2G_1/H_1$ and its generators\\
\hline
\hline
\multirow{2}{*}{$C_2$ 
}
    & $\{\pm 1\}$ if $\exists g_2\in 2G_2 \colon \HRe g_2=0$ & $\Zt$, $[i]$ \\ \cline{2-3}
    & $\{+1\}$ otherwise & $\Zo_4$, $[i]$ \\
\hline
\hline
\multirow{2}{*}{$C_3$ 
}
    & $\{1,-\frac{1}{2}\pm\frac{\sqrt{3}}{2}i\}$ if $\exists g_2\in 2G_2 \colon \HRe g_2=\pm\frac{1}{2}$ & $\Zt$, $[\frac{1}{2}+\frac{\sqrt{3}}{2}i]$ \\ \cline{2-3}
    & $\{+1\}$ otherwise & $\Zo_6$, $[\frac{1}{2}+\frac{\sqrt{3}}{2}i]$ \\
\hline
\hline
\multirow{3}{*}{$C_4$ 
}
    & $\{\pm1,\pm i\}$ if $\exists g_2\in 2G_2 \colon \HRe g_2=\pm\frac{\sqrt{2}}{2}$ & $\Zt$, $[\frac{\sqrt{2}}{2}+\frac{\sqrt{2}}{2}i]$ \\ \cline{2-3}
    & $\{\pm1\}$ else if $\exists g_2\in 2G_2 \colon \HRe g_2=0$ & $\Zo_4$, $[\frac{\sqrt{2}}{2}+\frac{\sqrt{3}}{2}i]$ \\
    \cline{2-3}
    & $\{+1\}$ otherwise & $\Zo_8$, $[\frac{\sqrt{2}}{2}+\frac{\sqrt{3}}{2}i]$ \\
\hline
\hline
\multirow{4}{*}{$C_6$ 
}
    & $\{\pm1,\pm \frac{1}{2}\pm\frac{\sqrt{3}}{2}i\}$ if $\exists g_2\in 2G_2 \colon \HRe g_2=\pm\frac{\sqrt{3}}{2}$ & $\Zt$, $[\frac{\sqrt{3}}{2}+\frac{1}{2}i]$ \\
    \cline{2-3}
    & $\{1,-\frac{1}{2}\pm\frac{\sqrt{3}}{2}i\}$ else if $\exists g_2\in 2G_2 \colon \HRe g_2=\pm\frac{1}{2}$ & $\Zo_4$, $[\frac{\sqrt{3}}{2}+\frac{1}{2}i]$ \\
    \cline{2-3}
    & $\{\pm 1\}$ else if $\exists g_2\in 2G_2 \colon \HRe g_2=0$ & $\Zo_6$, $[\frac{\sqrt{3}}{2}+\frac{1}{2}i]$ \\
    \cline{2-3}
    & $\{+1\}$ otherwise & $\Zo_{12}$, $[\frac{\sqrt{3}}{2}+\frac{1}{2}i]$ \\
\hline
\hline
\multirow{3}{*}{$D_3$ 
}
    & $\{\pm1,\pm \frac{1}{2}\pm\frac{\sqrt{3}}{2}i\}$ if $\exists g_2\in 2G_2 \colon \HRe g_2=0$ & $\Zt$, $[j]$ \\
    \cline{2-3}
    & $\{+1,-\frac{1}{2}\pm\frac{\sqrt{3}}{2}i\}$ else if $\exists g_2\in 2G_2 \colon \HRe g_2=\pm\frac{1}{2}$ & $\Zo_4$, $[j]$ \\ \cline{2-3}
    & $\{+1\}$ otherwise & $2D_3$ \\
\hline
\hline
\multirow{2}{*}{$D_2$ 
}
    & $2D_2=Q_8$ if $\exists g_2\in 2G_2 \colon \HRe g_2=0$ & $1$ \\
    \cline{2-3}
    & $\{+1\}$ otherwise & $2D_2=Q_8$ \\
\hline
\hline
\multirow{2}{*}{$D_4$ 
}
    & $2D_4$ if $\exists g_2\in 2G_2 \colon \HRe g_2=0$ & $1$ \\
    \cline{2-3}
    & $\{+1\}$ otherwise & $2D_4$ \\
\hline
\hline
\multirow{4}{*}{$D_6$ 
}
    & $2D_6$ if $\exists g_2\in 2G_2 \colon \HRe g_2=0$ & $1$ \\
    \cline{2-3}
    & $2D_3$ else if $\exists g_2\in 2G_2 \colon \HRe g_2=\pm\frac{\sqrt{3}}{2}$ & $\Zo_2\times\Zo_2$, $[i],[j]$ \\ \cline{2-3}
    & $\{+1,-\frac{1}{2}\pm\frac{\sqrt{3}}{2}i\}$ else if $\exists g_2\in 2G_2 \colon \HRe g_2=\pm\frac{1}{2}$ & $Q_8$, $[i],[j]$ \\ \cline{2-3}
    & $\{+1\}$ otherwise & $2D_6$ \\
\hline
\hline
\multirow{3}{*}{$T$ 
}
    & $2T$ if $\exists g_2\in 2G_2 \colon \HRe g_2=\pm\frac{1}{2}$ & $1$ \\
    \cline{2-3}
    & $Q_8=2D_2$ else if $\exists g_2\in 2G_2 \colon \HRe g_2=0$ & $\Zo_3$, $[\frac{1}{2}(1+i+j+k)]$ \\ \cline{2-3}
    & $\{+1\}$ otherwise & $2T$ \\
\hline
\hline
\multirow{3}{*}{$O$ 
}
    & $2O$ if $\exists g_2\in 2G_2 \colon \HRe g_2=0$ & $1$ \\
    \cline{2-3}
    & $2T$ else if $\exists g_2\in 2G_2 \colon \HRe g_2=\pm\frac{1}{2}$ & $\Zo_2$, $[\frac{\sqrt{2}}{2}+\frac{\sqrt{2}}{2}i]$ \\
    \cline{2-3}
    & $\{+1\}$ otherwise & $2O$ \\
\hline
\hline
\multirow{2}{*}{$I$ 
}
    & $2I$ if $\exists g_2\in 2G_2 \colon \HRe g_2=0$ or $\pm\frac{1}{2}$ & $1$ \\
    \cline{2-3}
    & $\{+1\}$ otherwise & $2I$ \\
\hline
\hline
\end{tabular}
 \caption{Subgroups $H_1\subseteq 2G_1$ for different choices of $G_2$}\label{tableSubgroupsH}
\end{table}

As an example, how the computation was conducted, we provide more details on the case of icosahedral group.

\begin{clai}
(1) The pairwise products of purely imaginary elements of $2I$ generate the whole group $2I$. (2) The pairwise products of elements of $2I$ with real part $\pm\frac{1}{2}$ also generate the whole group $2I$.
\end{clai}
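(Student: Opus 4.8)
The plan is to reduce both assertions to the classical subgroup structure of the binary icosahedral group $2I$ (order $120$), taken from~\cite{DuVal}. The facts I would invoke are: the standard $Q_8=\{\pm1,\pm i,\pm j,\pm k\}$ is a Sylow $2$-subgroup of $2I$ and is normal in the standard binary tetrahedral group $2T=Q_8\cup\{\tfrac12(\pm1\pm i\pm j\pm k)\}$, with $N_{2I}(Q_8)=2T$; the maximal subgroups of $2I$ are the conjugates of $2T$ (order $24$), of $2D_5$ (order $20$) and of $\Dic_3$ (order $12$); consequently any subgroup of $2I$ containing $Q_8$ has order divisible by $8$, hence order $8$, $24$ or $120$, and since an order-$24$ subgroup of $2I$ is conjugate to $2T$ and contains $Q_8$ only when it equals $N_{2I}(Q_8)=2T$, the only subgroups of $2I$ that contain $Q_8$ are $Q_8$, $2T$ and $2I$ itself. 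I would also record that every quaternion coordinate of every element of the standard $2T$ lies in $\tfrac12\Zo$, so no element of $2T$ has a coordinate involving $\sqrt{5}$.

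For part~(1), I would let $N\subseteq 2I$ be the subgroup generated by the products $g_1g_2$ of purely imaginary elements of $2I$. First, from $ij=k$, $jk=i$, $ki=j$ (with $i,j,k\in 2I$ purely imaginary) one gets $i,j,k\in N$, hence $Q_8\subseteq N$. Then I would exhibit a single product lying outside $2T$: taking $g_1=i$ and $g_2=\tfrac12(i+\varphi j+\varphi^{-1}k)$, both purely imaginary elements of $2I$ (see Table~\ref{tableSubgroupsTable}), the product $g_1g_2=\tfrac12(-1-\varphi^{-1}j+\varphi k)$ has coordinates involving $\sqrt{5}$, hence is not in $2T$. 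So $N$ contains $Q_8$ but is not contained in $2T$, and the dichotomy above leaves only $N=2I$.

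For part~(2), I would let $N\subseteq 2I$ be the subgroup generated by products of elements of $2I$ with real part $\pm\tfrac12$. The sixteen elements $\tfrac12(\pm1\pm i\pm j\pm k)$ all have real part $\pm\tfrac12$, and a short quaternion computation gives $\tfrac12(1+i+j+k)\cdot\tfrac12(1+i-j+k)=i$; applying the rotation $i\mapsto j\mapsto k\mapsto i$ (conjugation by $\tfrac12(1+i+j+k)\in 2I$, which preserves real parts and maps $2I$ onto itself) to this identity yields the analogous products equal to $j$ and to $k$, so again $Q_8\subseteq N$. To escape $2T$, I would use $q:=\tfrac12(1+\varphi^{-1}j+\varphi k)$, which lies in $2I$ (it arises from $\tfrac12(i+\varphi j+\varphi^{-1}k)$ by the even coordinate permutation interchanging the real and $i$-slots and the $j$- and $k$-slots) and has $\HRe q=\tfrac12$: then $q\cdot\tfrac12(1+i+j+k)=\tfrac14\bigl((1-\sqrt{5})+(1+\sqrt{5})j+2k\bigr)$ belongs to $N$ and involves $\sqrt{5}$, so $N\not\subseteq 2T$, whence $N=2I$.

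I expect the only real content to be the structural input about $2I$ in the first paragraph; the rest is routine quaternion multiplication. The one place I would be careful is the ``escape from $2T$'' step: a \emph{conjugate} of the standard $2T$ inside $2I$ may perfectly well contain elements with $\sqrt{5}$-coordinates, so the argument must be anchored to the \emph{specific} $Q_8$ just produced and to $N_{2I}(Q_8)=2T$ rather than to an abstract index-$5$ subgroup; and one must double-check, against Table~\ref{tableSubgroupsTable}, that the auxiliary quaternions $\tfrac12(i+\varphi j+\varphi^{-1}k)$ and $\tfrac12(1+\varphi^{-1}j+\varphi k)$ really do lie in $2I$.
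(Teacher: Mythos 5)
Your proof is correct, and it takes a genuinely different route from the paper's. The paper proves (1) by manipulating the Coxeter-type presentation $r^2=s^3=t^5=rst=-1$ of $2I$ with $r=i$ and $t=(\varphi i+\varphi^{-1}j+k)/2$ purely imaginary, extracting each generator as a product of pairwise products; and it proves (2) by observing that the pairwise products of the Hurwitz units generate $2T$ and then passing to the \emph{normal} closure of $2T$ in $2I$. You instead run a single uniform argument for both parts through the subgroup lattice of $2I$: land $Q_8$ inside the generated subgroup by explicit quaternion products, exhibit one product with a $\sqrt{5}$-coordinate to escape $N_{2I}(Q_8)=2T$, and invoke the fact that $Q_8$, $2T$ and $2I$ are the only subgroups containing $Q_8$. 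I checked your computations ($\tfrac12(1+i+j+k)\cdot\tfrac12(1+i-j+k)=i$, and both $\sqrt5$-products are unit quaternions as claimed), and your structural inputs ($N_{2I}(Q_8)=2T$, no subgroup of order $40$, uniqueness of the Sylow $2$-subgroup in an order-$24$ subgroup) are standard and correctly deployed, including your caution about anchoring the ``escape'' step to the specific $Q_8$. What each approach buys: yours imports the classification of subgroups of $2I$ but in exchange proves \emph{plain} generation in part (2), which is what the claim literally asserts, whereas the paper's argument for (2) directly establishes only normal generation (sufficient for its application, since $H_1$ is defined by normal generation, and in fact equivalent here because the set of pairwise products with fixed real part is conjugation-invariant); the paper's route for (1) avoids the lattice at the cost of trusting the presentation and its quaternionic realization.
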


\begin{proof}
There is a Coxeter type description of the group $2I$: this group is generated by three elements $r,s,t$ with the relations
\[
r^2=s^3=t^5=rst=-1,
\]
where one can take $r=i$, $t=(\varphi i+\varphi^{-1}j+k)/2$ so that $\HRe r=\HRe t=0$. The element $-1$ lies in $H_1$ since $-1=r^2$. The relations imply $t=(-1)(t\cdot t)^{-2}$, therefore $t\in H_1$. The generator $r=i$ can be represented as the product $jk$ hence $r\in H_1$. Finally, $s=-t^{-1}s^{-1}\in H_1$, which implies $H_1=2I$. This proves the first statement.

To prove the second statement we note that the pairwise products of Hurwitz units $\frac{1}{2}(\pm1\pm i\pm j\pm k)$ generate the subgroup $2T$ of $2I$. Conjugating $2T$ by all possible elements of $2I$ we get the whole group $2I$ (remember that $H_1$ is \emph{normally} generated by pairwise products of elements with the given real part).
\end{proof}

\begin{table}[h!]
	\centering
\resizebox{15cm}{!} {
\begin{tabular}{c||c|c|c|c|c|c|c|c|c|c|c|}
& $C_2$ & $C_3$ & $C_4$ & $C_6$ & $D_3$ & $D_2$ & $D_4$ & $D_6$ & $T$ & $O$ & $I$ \\
\hline
\hline
$C_2$ & $\Zo_2\times\Zo_2$ & $\Zo_4\times\Zo_6$ & $\Zo_2\times\Zo_4$ & $\Zo_2\times\Zo_6$ & $\Zo_2\times\Zo_2$ & $\Zo_2\times 1$ & $\Zo_2\times 1$ & $\Zo_2\times 1$ & $\Zo_2\times\Zo_3$ & $\Zo_2\times 1$ & $\Zo_2\times 1$ \\
\hline
$C_3$ && $\Zo_2\times\Zo_2$ & $\Zo_6\times\Zo_8$ & $\Zo_2\times\Zo_4$ & $\Zo_2\times\Zo_4$ & $\Zo_6\times2D_2$ & $\Zo_6\times2D_4$ & $\Zo_2\times2D_2$ & $\Zo_2\times 1$ & $\Zo_2\times\Zo_2$ & $\Zo_2\times 1$ \\
\hline
$C_4$ &&& $\Zo_2\times\Zo_2$ & $\Zo_4\times\Zo_6$ & $\Zo_4\times\Zo_2$ & $\Zo_4\times 1$ & $\Zo_2\times 1$ & $\Zo_4\times 1$ & $\Zo_4\times\Zo_3$ & $\Zo_2\times 1$ & $\Zo_4\times 1$ \\
\hline
$C_6$ &&&& $\Zo_2\times \Zo_2$ & $\Zo_4\times \Zo_2$ & $\Zo_6\times 1$ & $\Zo_6\times 1$ & $\Zo_2\times 1$ & $\Zo_4\times 1$ & $\Zo_4\times 1$ & $\Zo_4\times 1$ \\
\hline
$D_3$ &&&&& $\Zo_2\times \Zo_2$ & $\Zo_2\times 1$ & $\Zo_2\times 1$ & $\Zo_2\times 1$ & $\Zo_2\times 1$ & $\Zo_2\times 1$ & $\Zo_2\times 1$ \\
\hline
$D_2$ &&&&&& $1\times 1$ & $1\times 1$ & $1\times 1$ & $1\times \Zo_3$ & $1\times 1$ & $1\times 1$ \\
\hline
$D_4$ &&&&&&& $1\times 1$ & $1\times 1$ & $1\times \Zo_3$ & $1\times 1$ & $1\times 1$ \\
\hline
$D_6$ &&&&&&&& $1\times 1$ & $1\times 1$ & $1\times 1$ & $1\times 1$ \\
\hline
$T$ &&&&&&&&& $1\times 1$ & $1\times 1$ & $1\times 1$ \\
\hline
$O$ &&&&&&&&&& $1\times 1$ & $1\times 1$ \\
\hline
$I$ &&&&&&&&&&& $1\times 1$ \\
\hline
\end{tabular}
}
 \caption{The auxiliary quotients $(2G_1/H_1)\times (2G_2/H_2)$. Groups $G_1$ are listed in first column, $G_2$ are listed in first row.}\label{tableAuxiliary}
\end{table}

Finally, we analyze which elements of $(G_1/H_1)\times (G_2/H_2)$ lie in $H$. Again, in each particular case this is a simple calculation. We demonstrate the idea for one example.

\begin{clai}
$\pi_1(X(C_6,O))$ is trivial.
\end{clai}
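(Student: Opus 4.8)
The plan is to use the epimorphism
$(2C_6/H_1)\times(2O/H_2)\twoheadrightarrow\pi_1(X(C_6,O))=(2C_6\times 2O)/H$
of the general scheme and to show that its target is trivial, i.e. that $H=2C_6\times 2O$. By the $C_6$-row of Table~\ref{tableSubgroupsH} (the case ``$\exists g_2\in 2G_2\colon\HRe g_2=\pm\tfrac12$'', valid because $O$ has a $3$-rotation but no $6$-rotation) we have $H_1=\{1,-\tfrac12\pm\tfrac{\sqrt{3}}{2}i\}$, and by the $O$-row (first line, since $C_6$ has a $2$-rotation) we have $H_2=2O$; thus $(2C_6/H_1)\times(2O/H_2)\cong\Zo_4\times 1$, with the $\Zo_4$ generated by the class of $\zeta:=\tfrac{\sqrt{3}}{2}+\tfrac12 i=e^{i\pi/6}$, as recorded in Table~\ref{tableAuxiliary}. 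So it suffices to prove that the image of $\zeta$ in $\pi_1(X(C_6,O))$ is trivial.

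First I would record the easy half: $\{e\}\times 2O\subseteq H$, i.e. $\wt H_2=2O$. Since $C_6$ contains a $2$-rotation, the purely imaginary quaternion $i\in 2C_6$ supplies the real-part-$0$ partner demanded by Lemma~\ref{lemProduct}, and every lift of a $2$-rotation of $O$ is a product of two such lifts (for instance $i=jk$, and similarly for the remaining coordinate and edge $2$-rotations); as these generate $2O$, they all lie in $\wt H_2$. This is precisely the content of the first line of the $O$-row of Table~\ref{tableSubgroupsH}.

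The substance is the claim that $i=\zeta^{3}\in 2C_6$ lies in $\wt H_1$. Indeed $(i,i)$ is a stabilizer of the $2C_6\times 2O$-action (the quaternions $i$ and $i$ are conjugate, cf.\ Lemma~\ref{lemConjugacyCriterion}), hence $(i,i)\in H$; and $(e,i)\in H$ because $i\in 2O=\wt H_2$ by the previous paragraph; multiplying, $(i,e)=(i,i)\,(e,i)^{-1}\in H$, so $i\in\wt H_1$. In the cyclic group $2C_6=\langle\zeta\rangle\cong\Zo_{12}$ we then have $\zeta^{4}\in H_1\subseteq\wt H_1$ and $\zeta^{3}\in\wt H_1$, and since $\gcd(3,4)=1$ these generate $\zeta$, so $\wt H_1=2C_6$. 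Therefore $H\supseteq\wt H_1\times\wt H_2=2C_6\times 2O$, whence $H=2C_6\times 2O$ and $\pi_1(X(C_6,O))=1$.

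The step I expect to be the crux is the last paragraph: the residual $\Zo_4=2C_6/H_1$ appearing in Table~\ref{tableAuxiliary} is not killed by any further one-sided relation of the form of Lemma~\ref{lemProduct}, but by ``transporting'' the element $i$ from $2O$ into $2C_6$ along the diagonal stabilizer $(i,i)$ — which works exactly because $\wt H_2$ has already been identified with the full group $2O$. Everything else is the routine bookkeeping of real parts.
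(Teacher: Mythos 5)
Your proposal is correct and follows essentially the same route as the paper: identify $H_1$ and $H_2$ from Table~\ref{tableSubgroupsH} (giving the residual $\Zo_4\times 1$), then kill the residual $\Zo_4$ by observing that $(i,i)\in H$ as a stabilizer and $(1,\pm i)\in H$ since $2O=\wt H_2$, so $(i,1)\in H$ and $[i]=[\zeta]^3$ generates $2C_6/H_1$. Your ``$\gcd(3,4)=1$'' phrasing and the paper's ``$a^3$ also generates $\Zo_4$'' are the same observation.
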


\begin{proof}
According to the last column of Table~\ref{tableSubgroupsH}, the quotient $2C_6/H_1$ is isomorphic to the cyclic group $\Zo_4$ with generator $a=\left[\frac{\sqrt{3}}{2}+\frac{1}{2}i\right]$ since $2O$ has elements with real part~$\frac{1}{2}$. By a similar argument, the quotient $2O/H_2$ is trivial.

The group $2C_6/H_1$ is also generated by $a^3=[i]$. However, the element $(i,1)$ lies in $H$. Indeed, this element can be represented as the product $(i,i)\cdot(1,-i)$, and we have $(i,i)\in H$ (since $\HRe i=\HRe i$) and $(1,-i)\in H$ (since $-i\in H_2$). Therefore, $H=2G_1\times 2G_2$, and $\pi_1(X(C_6,O))\cong (2G_1\times 2G_2)/H$ is trivial.
\end{proof}

Similar considerations allow to fill the whole Table \ref{tableFundGrps}, which proves Theorem~\ref{thmFundGrps}. This allows to describe the topology of misorientation spaces.

\begin{thm}\label{thmAllMisSpaces}
The topological types of misorientation spaces are as listed in Table \ref{tableMisSpaces}.
\end{thm}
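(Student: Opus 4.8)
The plan is to read off Table~\ref{tableMisSpaces} from Table~\ref{tableFundGrps} using the elliptization conjecture together with the classification of spherical space forms, settling the handful of genuinely ambiguous cells by an explicit coordinate computation.

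By Proposition~\ref{propMisorSpaceIsMfd} each $X(G_1,G_2)$ is a closed orientable $3$-manifold, and by Theorem~\ref{thmFundGrps} its fundamental group is the finite group recorded in Table~\ref{tableFundGrps}. Hence, by the elliptization (geometrization) conjecture proved by Perelman~\cite{Lott}, $X(G_1,G_2)$ is homeomorphic to a spherical space form $S^3/\Gamma$, and such a space form is determined up to homeomorphism by its fundamental group as soon as that group is non-cyclic. The argument now splits according to the entry of Table~\ref{tableFundGrps}. If $\pi_1(X(G_1,G_2))=1$, Poincar\'{e}'s conjecture gives $X(G_1,G_2)\cong S^3$, which fills in every cell of Table~\ref{tableMisSpaces} marked $S^3$. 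If $\pi_1$ is finite and non-cyclic --- the only groups occurring are $Q_8$ (for $X(C_3,D_6)$) and $\Zo_3\times Q_8$, $\Zo_3\times 2D_4$ (for $X(C_3,D_2)$ and $X(C_3,D_4)$) --- the homeomorphism type is pinned down by $\pi_1$: the unique space form with group $Q_8$ is $S^3/Q_8=\Sp(1)/Q_8=\SO(3)/D_2=\Fl(\Ro^3)$, so $X(C_3,D_6)\cong\Fl(\Ro^3)$. For $X(C_3,D_2)$ and $X(C_3,D_4)$ I would check, via Lemma~\ref{lemConjugacyCriterion}, that no element of $2C_3\setminus\{\pm1\}$ (which has real part $\pm\tfrac12$) is conjugate in $\Sp(1)$ to an element of $2D_2$ or $2D_4$ (whose real parts lie in $\{0,\pm\tfrac{\sqrt2}{2},\pm1\}$); hence the two-sided action of $2C_3\times 2D_j$ on $S^3$ is free modulo its kernel $\langle(-1,-1)\rangle$, so these spaces are genuinely the free quotients $S^3/(2C_3\htimes 2D_j)$. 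This is what the label ``Free'' in Table~\ref{tableMisSpaces} records, with further discussion deferred to Remark~\ref{remFreeExplain}.

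It remains to treat the cyclic cells, where $\pi_1\cong\Zo_p$ and $X(G_1,G_2)$ is a lens space $L(p;q)$ with $p=|\pi_1|$. For $p\in\{2,3,4,6\}$ the equivalence $q'\equiv\pm q^{\pm1}\pmod p$ collapses all admissible residues into a single class, so there is exactly one such lens space, and it must be $\RP^3=L(2;1)$, $L(3;1)$, $L(4;1)$, or $L(6;1)$ respectively. The exceptional values $p=12$ (the cells $X(C_2,C_3)$ and $X(C_4,C_6)$) and $p=24$ (the cell $X(C_3,C_4)$) do admit several lens spaces, so the parameter $q$ has to be computed. In all three cells both groups are cyclic, hence $2G_1$ and $2G_2$ lie in a common circle subgroup of $\Sp(1)$; writing $S^3\subset\Ho$ as $\{z+jw:|z|^2+|w|^2=1\}$ with $z,w\in\Co$, the two-sided action $x\mapsto g_1xg_2^{-1}$ becomes the linear diagonal action $(z,w)\mapsto(e^{i(\theta-\psi)}z,\,e^{-i(\theta+\psi)}w)$, with $\theta,\psi$ the rotation phases of $g_1,g_2$. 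For $X(C_2,C_3)$ and $X(C_3,C_4)$ this action is free modulo $\langle(-1,-1)\rangle$, and reading the rotation numbers at a generator of $\pi_1$ identifies the quotients as $L(12;7)\cong L(12;5)$ and $L(24;17)\cong L(24;7)$. For $X(C_4,C_6)$ the action contains a subgroup isomorphic to $\Zo_2\times\Zo_2$ acting by $(z,w)\mapsto(\pm z,\pm w)$, whose fixed circles cause the failure of freeness; this subgroup has quotient $\{(z^2,w^2):|z^2|+|w^2|=1\}\cong S^3$, on which the residual $\Zo_{12}$ again acts linearly and diagonally (with the rotation numbers doubled), and one gets $L(12;5)$. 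Finally the relation $X(G_1,G_2)\cong X(G_2,G_1)$ from the remark following Theorem~\ref{thmFundGrps} fills in the cells below the diagonal.

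The main obstacle is this last, cyclic, part: one has to run the coordinate computation on $S^3\subset\Ho\cong\Co^2$ correctly and, in the $X(C_4,C_6)$ cell, dispose of the non-free part of the action before extracting the lens-space parameter, all the while keeping careful track of the equivalence $L(p;q)\cong L(p;q')$, $q'\equiv\pm q^{\pm1}\pmod p$, when matching against Table~\ref{tableMisSpaces}. Everything else is a mechanical translation of Table~\ref{tableFundGrps} through geometrization, the classification of spherical space forms, and Poincar\'{e}'s conjecture.
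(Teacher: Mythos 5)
Your proposal is correct and follows essentially the same route as the paper: geometrization plus uniqueness of spherical space forms for non-cyclic $\pi_1$, Poincar\'{e} for the trivial cases, the lens-space equivalence $q'\equiv\pm q^{\pm1}$ for $p\in\{2,3,4,6\}$, and explicit quaternionic coordinates for the three ambiguous cyclic cells $X(C_2,C_3)$, $X(C_3,C_4)$, $X(C_4,C_6)$. The only cosmetic difference is that for $X(C_4,C_6)$ the paper reduces to the already-settled $X(C_2,C_3)$ after quotienting by the sign-change subgroup, whereas you recompute the doubled rotation numbers directly; both yield $L(12;5)$.
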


\begin{proof}
The space $X(G_1,G_2)$ is a closed topological 3-manifold according to Proposition~\ref{propFiniteActionOnMfds}. If $\pi_1(X(G_1,G_2))=1$, then $X(G_1,G_2)\cong S^3$ according to Poincar\'{e} conjecture (Perelman's theorem, see~\cite{Lott}). In general, since the fundamental group is finite, the manifold $X(G_1,G_2)$ is an elliptic manifold, as follows from the elliptization conjecture, also proved by Perelman. The homeomorphism type of an elliptic manifold is uniquely determined by its fundamental group, unless the fundamental group is cyclic~\cite[Thm.4.4.14]{Thurs}. This allows to describe all misorientation spaces with noncyclic fundamental groups.

If, for a closed $3$-manifold $M^3$, there holds $\pi_1(M^3)\cong \Zo_n$, then $M^3$ is homeomorphic to the lens space $L(n;q)$ for some $q$ coprime with $n$. Recall that $L(n;q)$ is the quotient space of $S^3=\{(v,u)\in \Co^2\mid |v|^2+|u|^2=1\}$ by the free action of $\Zo_n$ given by
\[
\epsilon_n(v,u)=(\epsilon_nv,\epsilon_n^pu)
\]
where $\epsilon_n=\cos\frac{2\pi}{n}+i\sin\frac{2\pi}{n}$ is the $n$-th primitive root of unity considered as the generator of $\Zo_n$. The classical result of 3-dimensional topology (see e.g.~\cite{PrYa} and references therein) states that $L(n;q_1)\cong L(n;q_2)$ if and only if $q_1=\pm q_2^{\pm 1}\mod n$. These observations show that, whenever $\pi_1(M^3)=\Zo_n$ for $n=2,3,4,6$, the homeomorphism type is defined uniquely: $M^3\cong L(n,1)$.

By looking at Table~\ref{tableFundGrps}, we see that only three cases are not covered by the preceding arguments. These are the following: (1)~$X(C_2,C_3)$ with fundamental group $\Zo_{12}$; (2)~$X(C_3,C_4)$ with fundamental group $\Zo_{24}$; (3)~$X(C_4,C_6)$ with fundamental group $\Zo_{12}$. We consider these cases separately.
\begin{enumerate}
  \item The group $2C_2$ is the cyclic group generated by $\epsilon_4\in\Co$, the $4$-th primitive root of unity. Similarly, the group $2C_3$ is the cyclic group generated by $\epsilon_6\in\Co$. The action of the generator
\[
a=[\epsilon_4\times \epsilon_6]\in 2C_2\htimes 2C_3\cong \Zo_{12}.
\]
on a quaternion $h=u+jv$ of unit length is given by
\[
ah=\epsilon_4(u+vj)\epsilon_6=\epsilon_4\epsilon_6u + \epsilon_4\epsilon_6^{-1}vj.
\]
Noticing that $\epsilon_4\epsilon_6^{-1}=\epsilon_{12}$ and $\epsilon_4\epsilon_6=\epsilon_{12}^5$, we see that the action of $2C_2\htimes 2C_3$ on $S^3$ coincides with the free action which defines $L(12;5)$.
  \item Similar to the previous case, the action of $[\epsilon_6\times \epsilon_8]\in 2C_3\htimes 2C_4\cong \Zo_{24}$ on $S^3$ is given by $(v,u)\mapsto (\epsilon_6\epsilon_8^{-1}v, \epsilon_6\epsilon_8u)=(\epsilon_{24}z,\epsilon_{24}^7u)$. This coincides with the defining action for $L(24;7)$.
  \item The last exceptional case, $X(C_4,C_6)$ follows from $X(C_2,C_3)$. Indeed, for the groups $G_1=C_4$ and $G_2=C_6$ we have $H_1=C_2$ and $H_2=C_2$. The subgroup $H\subset 2G_1\times 2G_2$ is generated by $(-1,1)$, $(1,-1)$, and $(i,i)$. For the induced action of the group $\hat{H}=H/\{\pm(1,1)\}\cong \Zo_2\times\Zo_2$ on $S^3$ we have $S^3/H\cong S^3$ (for example, this follows from Armstrong theorem (Proposition~\ref{propArmstrong}), also see coordinate explanation in Section~\ref{secCoordinatesSpaces}). The residual action of $(2G_1\times 2G_2)/H$ on $S^3/H\cong S^3$ is free and coincides with the two-sided action of $C_4\htimes C_6\cong \Zo_{12}$ on $S^3$ which defines $X(C_2,C_3)$. The statement follows.
\end{enumerate}

The analysis of these exceptional cases finishes the proof.
%
\end{proof}

\begin{rem}\label{remFreeExplain}
The positions for $X(C_3,D_2)$ and $X(C_3,D_4)$ in Table~\ref{tableMisSpaces} are filled with ``Free''. This indicates that the action of the group $2G_1\htimes 2G_2$ on $S^3$ is free, so that the misorientation spaces are smooth manifolds with fundamental groups $2G_1\htimes 2G_2$ in these cases.

We note that $X(C_3,D_2)$ can be described as follows. The space $F_3$ of full flags in $\Ro^3$ has fundamental group $Q_8=2D_2$ (see Section~\ref{secD2detailed} for more details). Consider the representation of $\Zo_3$ on $\Ro^3$ generated by a 3-rotation over some axis. This representation induces the action of $\Zo_3$ on $F_3$. The latter action is free (whenever the rotation stabilizes the 2-plane of a flag, it acts nontriviall on 1-line, and vice versa). The space $X(C_3,D_2)$ can be naturally identified with the quotient $F_3/\Zo_3$ (this fact is straightforward from the definition of misorientation space).
\end{rem}

The diagonal of Table \ref{tableMisSpaces} implies Proposition~\ref{propSameGroup}.

\begin{rem}\label{remInvolution}
Note that there is a non-free involution $\sigma$ on $S^3\subset \Ho$, sending $h\in S^3$ to $h^{-1}$. It is not difficult to see that $S^3/\sigma\cong D^3$. Similarly, the non-free involution $\sigma\colon\SO(3)\to\SO(3)$, $\sigma(g)=g^{-1}$ yields $\SO(3)/\sigma\cong \Cone\RP^2$. On the other hand, it is impossible to get $\Cone\RP^2$ as the quotient of an involution on $S^3$. Indeed, this involution is necessarily orientation reversing since otherwise the quotient would be a closed topological manifold. But in this case Smith theory implies that the fixed point set is either $S^0$ or $S^2$, and, provided that the embedding of the fixed point set in $S^3$ is tame, the quotient would be either $\Sigma\RP^2$ or $D^3$. Also, it is impossible to get $D^3$ by moding out an involution on $\RP^3$. Indeed, if there exist such involution on $\RP^3$ then its fixed point set contains $S^2=\dd D^3$, but each such sphere divides $\RP^3$ into two non-homeomorphic parts. These observations make Proposition~\ref{propSameGroup} completely consistent with Proposition~\ref{propPatalaSchuh}, the result of \cite{PatSch}, described in the introduction.

\end{rem}

\subsection{Rotations in $\SO(4)$}

\begin{prop}\label{propCriter1con}
Let $G\subset \SO(4)$ be a discrete subgroup and $H$ be the normal subgroup of $G$ generated by stabilizers of the induced action of $G$ on $S^3\subset V=\Ro^4$. The following conditions are equvalent:
\begin{enumerate}
  \item $H=G$;
  \item $\pi_1(S^3/G)=1$;
  \item $G$ is generated by rotations.
\end{enumerate}
\end{prop}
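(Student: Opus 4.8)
The plan is to prove the cycle of implications $(1)\Rightarrow(2)\Rightarrow(3)\Rightarrow(1)$, invoking the earlier results where possible. The implication $(1)\Rightarrow(2)$ is essentially Armstrong's theorem: since $S^3$ is simply connected and $G$ acts on it (triangulate $S^3$ so that $G$ acts simplicially, which is possible for a finite group action by orientation preserving diffeomorphisms), Corollary~\ref{corPi1} gives $\pi_1(S^3/G)\cong G/H$, so $H=G$ forces $\pi_1(S^3/G)=1$. Conversely, $(2)\Rightarrow(1)$ is the same statement read backwards: if $\pi_1(S^3/G)\cong G/H$ is trivial then $H=G$. So $(1)\Leftrightarrow(2)$ is immediate from Corollary~\ref{corPi1} together with Remark~\ref{remNormalAutomatically}, which guarantees $H$ is normal so that the quotient makes sense.

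The substantive content is the equivalence with $(3)$. For $(3)\Rightarrow(1)$: suppose $G$ is generated by rotations. A rotation $\rho\in\SO(4)$ fixes a real codimension~$2$ subspace $W\subset V$, hence fixes $W\cap S^3\cong S^1$ pointwise; in particular $\rho$ has a nonempty fixed point set on $S^3$, so $\rho\in G_x$ for any $x\in W\cap S^3$, and therefore $\rho\in H$. Since such rotations generate $G$, we get $G=H$. For the reverse direction $(1)\Rightarrow(3)$: we need to understand which elements can occur as stabilizers $G_x$ of a point $x\in S^3$. If $g\in G$ fixes $x\in S^3$, then $g$ fixes the line $\Ro x$, hence preserves the orthogonal complement $x^\perp\cong\Ro^3$ and acts there as an element of $O(3)$; since $g$ preserves orientation of $\Ro^4$ and fixes $\Ro x$, it lies in $\SO(3)$ acting on $x^\perp$. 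Every nontrivial element of $\SO(3)$ is an axial rotation, i.e. fixes a line in $x^\perp$; combined with the fixed line $\Ro x$, the element $g$ fixes a $2$-plane in $\Ro^4$ and is identity on it, so $g$ is a rotation in the sense of the paper. Thus every element of every point stabilizer is a rotation, so $H$ — being generated by these — is generated by rotations, and if $H=G$ then $G$ is generated by rotations.

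The main obstacle, such as it is, is the elementary linear-algebra verification in $(1)\Rightarrow(3)$ that an element of $\SO(4)$ fixing a unit vector $x$ is necessarily a rotation: one must check that fixing $\Ro x$ pointwise plus being orientation preserving on $\Ro^4$ forces the induced map on $x^\perp$ to lie in $\SO(3)$ (not merely $O(3)$), and then recall that a nontrivial rotation of $\Ro^3$ has a fixed axis so that $g$ indeed has a full codimension-$2$ fixed subspace; the case $g=\id$ is trivial and can be excluded from the generating set. I would also remark that this proposition, restricted to $G$ of the form $2G_1\htimes 2G_2$, together with Proposition~\ref{propMikh}, yields Corollary~\ref{corSpherePseudoref}: $\pi_1(X(G_1,G_2))$ is trivial if and only if $2G_1\htimes 2G_2\subset\SO(4)$ is generated by rotations. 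One caveat worth flagging is the simplicial-action hypothesis needed for Armstrong's theorem: for a finite group acting smoothly on a smooth manifold one can always find a $G$-invariant triangulation (e.g. via an equivariant smoothing / the existence of equivariant triangulations of smooth $G$-manifolds), so Corollary~\ref{corPi1} applies as stated.
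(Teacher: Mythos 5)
Your proof is correct and follows essentially the same route as the paper: the equivalence $(1)\Leftrightarrow(2)$ is Armstrong's theorem, and $(1)\Leftrightarrow(3)$ rests on the observation that a nontrivial element of $\SO(4)$ has a fixed point on $S^3$ if and only if it is a rotation (the paper states this as: the fixed set $V^A$ of a nontrivial $A\in\SO(4)$ is either $\{0\}$ or $2$-dimensional, which is exactly the linear-algebra fact you verify via the restriction to $x^\perp$). Your extra remarks on excluding the identity from the generating set and on the existence of an equivariant triangulation are sensible refinements of details the paper leaves implicit.
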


\begin{proof}
Equivalence of (1) and (2) obviously follows from Armstrong theorem (Proposition~\ref{propArmstrong}). To prove that (1) is equivalent to (3), note that the fixed point set $V^A$ of a nontrivial element $A\in \SO(4)$ is either $\{0\}$ or $2$-dimensional. Therefore, $A$ is a rotation if and only if the induced action of $A$ on $S^3$ has fixed points. Therefore, the condition that $G$ is generated by elements with nontrivial stabilizers is equivalent to the condition that $G$ is generated by rotations.
\end{proof}

\begin{cor}\label{corMikhConverse}
The converse to Proposition~\ref{propMikh} holds if $\dim V=4$: if $G\subset\SO(4)$ is not generated by rotations, then $\Ro^4/G\ncong \Ro^4$.
\end{cor}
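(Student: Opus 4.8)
The plan is to reduce the statement to Proposition~\ref{propCriter1con} together with the elementary observation that $\Ro^4$, with its linear $G$-action, is the open cone over $S^3$ carrying the restricted $G$-action. Suppose $G\subset\SO(4)$ is not generated by rotations. By Proposition~\ref{propCriter1con} (equivalence of (1) and (3), via (2)), this is equivalent to $\pi_1(S^3/G)\neq 1$. Hence it suffices to prove the implication: if $\pi_1(S^3/G)\neq 1$, then $\Ro^4/G\ncong\Ro^4$. I would prove this by contradiction.

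Assume $\varphi\colon\Ro^4/G\toiso\Ro^4$ is a homeomorphism. Let $q\colon\Ro^4\to\Ro^4/G$ be the quotient map and put $o=q(0)$. Since $G$ fixes the origin and the $G$-orbit of $0$ is $\{0\}$, we have $q^{-1}(o)=\{0\}$, so $q$ restricts to a quotient map $\Ro^4\setminus\{0\}\to(\Ro^4/G)\setminus\{o\}$, identifying $(\Ro^4/G)\setminus\{o\}$ with $(\Ro^4\setminus\{0\})/G$. On one hand, $(\Ro^4/G)\setminus\{o\}\cong\Ro^4\setminus\{\varphi(o)\}$, which is homotopy equivalent to $S^3$, hence simply connected. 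On the other hand, the radial retraction $(x,t)\mapsto (1-t)x+t\,x/\|x\|$ is a $G$-equivariant deformation retraction of $\Ro^4\setminus\{0\}$ onto $S^3$ (here we use $G\subset\SO(4)$, so $G$ preserves norms), and it descends to a deformation retraction of $(\Ro^4\setminus\{0\})/G$ onto $S^3/G$. Combining the two facts, $\pi_1(S^3/G)\cong\pi_1\bigl((\Ro^4\setminus\{0\})/G\bigr)\cong\pi_1\bigl(\Ro^4\setminus\{\varphi(o)\}\bigr)=1$, contradicting $\pi_1(S^3/G)\neq1$. Therefore $\Ro^4/G\ncong\Ro^4$, as claimed.

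The argument is entirely formal once the cone structure is exploited; the only point needing (minor) care is that the hypothetical homeomorphism $\varphi$ need not send $o$ to the origin of $\Ro^4$, but this is harmless since $\Ro^4\setminus\{p\}\simeq S^3$ for every point $p$. So the substance of the proof is simply the recognition that, were $\Ro^4/G$ a copy of $\Ro^4$, the non-manifold behaviour would have to be concentrated at the single cone point $o$, and deleting it detects the link $S^3/G$, whose fundamental group is nontrivial exactly when $G$ is not generated by rotations. I do not anticipate a genuine obstacle here; the only alternative phrasing worth noting is that the same reasoning yields $S^3/G\ncong S^3$ as well, although that is not needed for the stated corollary.
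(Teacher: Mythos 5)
Your proof is correct and follows essentially the same route as the paper: both arguments observe that $\Ro^4/G$ is the open cone over $S^3/G$, that deleting the cone point leaves a space homotopy equivalent to $S^3/G$ with $\pi_1(S^3/G)\neq 1$ by Proposition~\ref{propCriter1con}, while $\Ro^4$ minus any point is simply connected. Your write-up merely spells out the details (the identification $(\Ro^4/G)\setminus\{o\}\cong(\Ro^4\setminus\{0\})/G$, the equivariant radial retraction, and the fact that the homeomorphism need not send the apex to the origin) that the paper leaves implicit.
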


\begin{proof}
For any point $x\in \Ro^4$, the complement $\Ro^4\setminus\{x\}$ is simply connected. However, if $G$ is not generated by rotations, the space $\Ro^4/G$ is an open cone over $S^3/G$ with apex $y$. Hence we have
\[
\pi_1((\Ro^4/G)\setminus\{y\})\cong\pi_1(S^3/G)\neq 1
\]
according to Proposition~\ref{propCriter1con}.
\end{proof}

\begin{rem}
The complete classification of all discrete subgroups $G\subset\SO(n)$ which satisfy $\Ro^n/G\cong \Ro^n$ is done in the recent work of Lange~\cite{Lange}. Corollary~\ref{corMikhConverse} is covered by this general result.
\end{rem}

Now let $G_1,G_2$ be discrete subgroups of $\SO(3)$, and $G=2G_1\htimes 2G_2$, as before. The group $G$ is considered as discrete subgroup in $\SO(4)\cong \SO(3)\htimes\SO(3)$, as described in Construction~\ref{conSO4}, so we have a natural representation of $G$ on $\Ro^4$. As before, let $H\subseteq G_1\times G_2$ be the normal subgroup, generated by all stabilizers of the two-sided action of $G_1\times G_2$ on a sphere.

\begin{cor}\label{corSpherePseudoref}
$X(G_1,G_2)\cong S^3$ if and only if $2G_1\htimes 2G_2$ is generated by rotations.
\end{cor}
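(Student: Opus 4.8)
The plan is to reduce the statement to Proposition~\ref{propCriter1con} combined with the Poincar\'{e} conjecture, with essentially no new computation. First I would rewrite the misorientation space in the $\SO(4)$ form supplied by Construction~\ref{conSO4}: since the kernel of the two-sided representation of $\Sp(1)\times\Sp(1)$ on $\Ho\cong\Ro^4$ is exactly $\langle(-1,-1)\rangle$, the action of $2G_1\times 2G_2$ on $S^3$ factors through the effective action of $G\eqd 2G_1\htimes 2G_2\subset\SO(4)$, so that $X(G_1,G_2)=S^3/G$. By Proposition~\ref{propMisorSpaceIsMfd} (equivalently, in this language, by Proposition~\ref{propFiniteActionOnMfds}), this quotient is a closed orientable topological $3$-manifold.

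Next I would apply Proposition~\ref{propCriter1con} to the discrete subgroup $G=2G_1\htimes 2G_2\subset\SO(4)$. It yields the equivalence $\pi_1(X(G_1,G_2))=\pi_1(S^3/G)=1$ if and only if $G$ is generated by rotations. It therefore remains only to identify the triviality of $\pi_1(X(G_1,G_2))$ with the homeomorphism $X(G_1,G_2)\cong S^3$.

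For the implication ``$G$ generated by rotations $\Rightarrow X(G_1,G_2)\cong S^3$'', the group $\pi_1(X(G_1,G_2))$ is trivial by the previous step, and $X(G_1,G_2)$ is a closed orientable $3$-manifold, so the Poincar\'{e} conjecture (Perelman's theorem, \cite{Lott}) gives $X(G_1,G_2)\cong S^3$; alternatively one may invoke directly the strengthening of Proposition~\ref{propMikh} due to Lange~\cite{Lange16}, which produces $S^3/G\cong S^3$ for a rotation group $G$ without appealing to Poincar\'{e}. Conversely, if $X(G_1,G_2)\cong S^3$ then $\pi_1(X(G_1,G_2))=1$, and Proposition~\ref{propCriter1con} forces $G=2G_1\htimes 2G_2$ to be generated by rotations. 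This closes the equivalence.

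I do not anticipate a real obstacle: every ingredient has already been assembled above, and the only non-elementary input is the Poincar\'{e}/elliptization machinery (or Lange's classification-based refinement of Proposition~\ref{propMikh}). The one point that must be stated with care is that the noneffective kernel $\langle(-1,-1)\rangle$ has to be quotiented out before applying Proposition~\ref{propCriter1con}, which is phrased for the \emph{effective} $\SO(4)$-action on $S^3$; this passage is precisely the content of Construction~\ref{conSO4}.
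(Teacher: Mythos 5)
Your proposal is correct and follows essentially the same route the paper intends for this corollary: identify $X(G_1,G_2)$ with $S^3/G$ for $G=2G_1\htimes 2G_2\subset\SO(4)$ via Construction~\ref{conSO4}, apply Proposition~\ref{propCriter1con} to equate triviality of $\pi_1$ with $G$ being generated by rotations, and invoke the Poincar\'{e} conjecture (or Lange's refinement of Proposition~\ref{propMikh}) to pass from simple connectivity to the homeomorphism with $S^3$. Your explicit remark about quotienting out the noneffective kernel $\langle(-1,-1)\rangle$ before applying Proposition~\ref{propCriter1con} is a careful touch that the paper leaves implicit.
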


\section{Coordinates on several misorientation spaces}\label{secCoordinatesSpaces}

\subsection{$n$-valued topological groups}\label{subsecNval}

In the following subsections we describe the topological types of several misorientation spaces explicitly. To give the idea of our constructions we recall the observation that appeared in the work~\cite[Sect.4.7]{BuchRees} of Buchstaber and Rees, and review their motivation coming from the theory of $n$-valued groups.

\begin{prop}\label{propBuchRees}
Let $\sigma$ be the conjugation action by the imaginary unit $i$ on $\Sp(1)\cong S^3$. Then the quotient $\Sp(1)/\sigma$ is homeomorphic to $S^3$.
\end{prop}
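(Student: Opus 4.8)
The plan is to make the involution $\sigma$ explicit, compute the quotient by hand, and recognize the result as $S^3$ via an elementary folding map. First I would pass to the decomposition $\Ho\cong\Co\oplus\Co j$, writing a unit quaternion as $h=z+wj$ with $(z,w)\in\Co^2$, $|z|^2+|w|^2=1$. A short computation, using $i^{-1}=-i$ and $j(-i)=ij$, gives $\sigma(h)=ihi^{-1}=z-wj$; equivalently, under the identification $\Sp(1)\cong S^3=\{(z,w)\in\Co^2:|z|^2+|w|^2=1\}$, the map $\sigma$ is simply $(z,w)\mapsto(z,-w)$. In particular $\sigma$ is an involution (as it must be, since it is conjugation by $i$ and $i^2=-1$ acts trivially) whose fixed-point set is the circle $\{(z,0):|z|=1\}$.

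Next I would exhibit the homeomorphism $S^3/\sigma\cong S^3$ directly. Define $F\colon S^3\to S^3$ by $F(z,w)=(z,w^2/|w|)$ for $w\neq 0$ and $F(z,0)=(z,0)$; this lands in $S^3$ since $|w^2|/|w|=|w|$. The three points to verify are: (i) $F$ is continuous — the only issue is along the circle $w=0$ (where necessarily $|z|\to 1$), and there $|w^2|/|w|=|w|\to 0$, so $F$ extends continuously; (ii) $F$ is $\sigma$-invariant, because $(-w)^2/|{-w}|=w^2/|w|$, so it factors through a continuous map $\bar F\colon S^3/\sigma\to S^3$; and (iii) $\bar F$ is a bijection — injective because $F(z,w)=F(z',w')$ forces $z=z'$, then $|w|=|w'|$, then $w^2=w'^2$, so $w'=\pm w$; surjective because for $(z,v)\in S^3$ with $v\neq 0$ any square root $w$ of $|v|v$ lies on $S^3$ and satisfies $F(z,w)=(z,v)$, while $(z,0)$ is its own preimage. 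Since $S^3/\sigma$ is compact and $S^3$ is Hausdorff, $\bar F$ is a homeomorphism. The geometric content is the join picture $S^3=S^1_z * S^1_w$, in which $\sigma$ is the identity on the $S^1_z$-factor and the antipodal map on the $S^1_w$-factor; collapsing the antipodal identification replaces $S^1_w/\{\pm1\}$ by a circle again, so the quotient is still a join of two circles.

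There is essentially no hard part: the whole argument is choosing coordinates in which $\sigma$ becomes $(z,w)\mapsto(z,-w)$ and then writing the folding map $w\mapsto w^2/|w|$ carefully enough to see it is continuous across the fixed circle; the only place where a little care is needed is that extension over $w=0$. If one prefers a conceptual proof, one may instead note that, viewed on $V=\Ro^4$, $\sigma$ is a rotation in the sense of this paper — the identity on the codimension-$2$ subspace $\{(z,0)\}$ and rotation by $\pi$ (hence orientation preserving) on the orthogonal plane — so $\langle\sigma\rangle\cong\Zo_2$ is generated by rotations and Proposition~\ref{propMikh} gives $\Ro^4/\langle\sigma\rangle\cong\Ro^4$ and $S^3/\langle\sigma\rangle\cong S^3$; alternatively, $\sigma$ has nonempty fixed set, so Corollary~\ref{corPi1} yields $\pi_1(S^3/\sigma)\cong\Zo_2/\Zo_2=1$ and then $S^3/\sigma\cong S^3$ follows from Proposition~\ref{propFiniteActionOnMfds} and the Poincar\'e conjecture. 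I would nonetheless keep the explicit map $F$ as the main proof, since it is the prototype for the coordinate constructions in the following subsections.
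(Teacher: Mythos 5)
Your proof is correct and takes essentially the same route as the paper: both split $\Ho$ as $\Co\oplus\Co j$, observe that $\sigma$ is the identity on the first factor and $-1$ on the second, and conclude via the join picture $S^3\cong S^1\ast S^1$ with $S^1/\{\pm1\}\cong S^1$. Your version merely makes the paper's abstract join argument explicit by writing down the folding homeomorphism $w\mapsto w^2/|w|$, which is a fine (and arguably more useful) way to present the same idea.
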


\begin{proof}
By writing $\Ho$ as $V_1\oplus V_2$, $V_1=\{a+bi\}$, $V_2=\{cj+dk\}$, we see that the conjugation action is trivial on the first summand $V_1$, and is the multiplication by $-1$ on the second summand $V_2$. The unit sphere $\Sp(1)$ is represented as the topological join $S^1_1\ast S^1_2$, where $S^1_i$ is the unit circle in $V_i$ for $i=1,2$. Since $S^1_2/\{\pm1\}\cong S^1$, we have
\[
\Sp(1)/\sigma\cong (S^1_1\ast S^1_2)/\sigma\cong (S^1_1)\ast(S^2/\{\pm1\})\cong S^1\ast S^1\cong S^3,
\]
which finishes the proof.
\end{proof}

Proposition~\ref{propBuchRees} was used in~\cite{BuchRees} to construct the structure of a $2$-valued topological group on the sphere $S^3$. For a general theory of $n$-valued groups we refer the reader to the overview paper~\cite{BuchNval}. In general, if $\Gamma$ is a finite group of automorphisms of a Lie group $W$ (e.g. given by conjugation action), then the quotient $W/\Gamma$ has the natural structure of a $|\Gamma|$-valued topological group, see~\cite[Sect.3(3)]{BuchRees}.

Similarly, according to~\cite[Sect.3(5)]{BuchRees}, given a finite subgroup $\Gamma$ of a Lie group $W$, the bicoset space $\sit{\Gamma}\backslash \jump{W}/\sit{\Gamma}$ admits the natural structure of a $|\Gamma|$-valued topological group. This construction is more relevant to misorientation spaces. It follows directly, that the misorientation space $X(G,G)$ has the structure of the $|G|$-valued group for any discrete subgroup $G\subset\SO(3)$. In particular, from Proposition~\ref{propSameGroup}, it follows that

\begin{prop}
There is a natural structure of $|G|$-valued topological group on $S^3$, determined by $G$ from the list $\{D_2,D_4,D_6,T,O,I\}$. There is a natural structure of $|G|$-valued topological group on $\RP^3$, determined by $G$ from the list $\{C_2,C_3,C_4,C_6,D_3\}$.
\end{prop}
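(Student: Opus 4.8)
The plan is short, since the statement is essentially the conjunction of two facts already recorded above. First I would invoke the bicoset construction of Buchstaber and Rees: by \cite[Sect.3(5)]{BuchRees}, for any finite subgroup $\Gamma$ of a Lie group $W$, the bicoset space $\Gamma\backslash W/\Gamma$ carries a canonical $|\Gamma|$-valued topological group structure, with multiplication induced from that of $W$ (the product of two double cosets $\Gamma A\Gamma$ and $\Gamma B\Gamma$ being the set $\{\Gamma Ag B\Gamma : g\in\Gamma\}$, which has at most $|\Gamma|$ elements), unit the coset $\Gamma e\Gamma$, and inverse the involution $[A]\mapsto[A^{-1}]$. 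I would apply this with $W=\SO(3)$ and $\Gamma=G$ for each of the eleven groups $G\subset\SO(3)$ of Table~\ref{tableSubgroupsTable}; by Definition~\ref{definMisorSpace}, the resulting bicoset space is exactly $X(G,G)$, which is thereby equipped with a $|G|$-valued topological group structure.

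Second, I would transport this structure along the homeomorphisms supplied by Proposition~\ref{propSameGroup}. For $G\in\{D_2,D_4,D_6,T,O,I\}$ that proposition gives a homeomorphism $X(G,G)\cong S^3$; pushing the $|G|$-valued multiplication on $X(G,G)$ forward through this homeomorphism puts a $|G|$-valued topological group structure on $S^3$, determined by the choice of $G$. For $G\in\{C_2,C_3,C_4,C_6,D_3\}$ the proposition gives $X(G,G)\cong\RP^3$, and the identical transport argument yields a $|G|$-valued topological group structure on $\RP^3$. This is exactly the assertion to be proved.

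There is no serious obstacle here; the only thing to check carefully is that the Buchstaber--Rees construction applies verbatim, i.e. that the two-sided translation action of $G$ on $\SO(3)$ is the one whose bicoset space is $X(G,G)$ --- which is immediate from the definition --- and that the $n$-valued group axioms (associativity, unit, inverse, continuity) hold, which is precisely the content of \cite[Sect.3(5)]{BuchRees} and requires no new computation. If desired, one could remark that the groups in each of the two lists are pairwise non-isomorphic, so they give a priori genuinely different $n$-valued structures on $S^3$ and on $\RP^3$; but as the statement only claims existence, this is not needed for the proof.
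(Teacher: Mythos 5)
Your proposal is correct and follows exactly the paper's argument: apply the Buchstaber--Rees bicoset construction \cite[Sect.3(5)]{BuchRees} to $\Gamma=G$ inside $W=\SO(3)$ to put a $|G|$-valued group structure on $X(G,G)$, and then transport it through the homeomorphisms $X(G,G)\cong S^3$ or $X(G,G)\cong\RP^3$ of Proposition~\ref{propSameGroup}. No gaps.
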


The case when the discrete subgroup $G\subset\SO(3)$ is the cyclic group $C_n$ or the dihedral group $D_n$ for general $n$ is covered by Propositions~\ref{propCyclicCoords}, \ref{propCyclicCoordsDcaseOdd} and \ref{propCyclicCoordsDcaseEven} below. In all these cases there is a $|G|$-valued group structure on the space $X(G,G)$ which is either $S^3$ or $\RP^3$.

\subsection{Coordinates given by consecutive folding}

\begin{prop}\label{propCyclicCoords}
Let $C_n\subset \SO(3)$ be the cyclic group generated by an axial $n$-rotation. Then $X(C_n,C_n)\cong \RP^3$.
\end{prop}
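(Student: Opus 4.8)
The plan is to give an explicit coordinate description of $X(C_n,C_n)$ by realizing the two-sided action in the same complex model that was used for lens spaces in the proof of Theorem~\ref{thmAllMisSpaces}. Identify $\Ho$ with $\Co^2$ by writing a unit quaternion as $h=u+vj$ with $(u,v)\in\Co^2$, $|u|^2+|v|^2=1$, and let $2C_n\subset\Sp(1)$ be the cyclic group generated by $\zeta=\epsilon_{2n}=\cos\frac{\pi}{n}+i\sin\frac{\pi}{n}$ (an element of order $2n$). Then the generator of $2C_n\htimes 2C_n$ acts by $h\mapsto \zeta h\zeta$, i.e. $(u,v)\mapsto(\zeta^2 u, v)$ on the complex coordinates, since left and right multiplication by $i$-axis elements conjugate oppositely on the $vj$-part: $\zeta(u+vj)\zeta = \zeta^2 u + (\zeta \overline{\zeta})\,vj = \zeta^2 u + vj$. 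Thus the effective group acting on $S^3$ is cyclic of order $n$, generated by $(u,v)\mapsto(\epsilon_n u, v)$, where $\epsilon_n=\zeta^2$. So $X(C_n,C_n)$ is the quotient of $S^3=\{|u|^2+|v|^2=1\}$ by $u\mapsto\epsilon_n u$ with $v$ fixed.

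The next step is to identify this quotient with $\RP^3$ by an explicit "folding" homeomorphism. The circle factor $\{|u|=t\}$ is divided by rotation by $2\pi/n$; quotienting a circle by $\Zo_n$ gives a circle, via $u\mapsto u^n/|u|^{n-1}$ (rescaled to keep the modulus $t$). Carrying this out uniformly in the join decomposition $S^3 = S^1_u \ast S^1_v$, where $S^1_u=\{(u,0)\}$ and $S^1_v=\{(0,v)\}$: the subgroup fixes $S^1_v$ pointwise and acts freely on $S^1_u$ with quotient a circle, so
\[
X(C_n,C_n)\cong (S^1_u/\Zo_n)\ast S^1_v\cong S^1\ast S^1\cong S^3.
\]
Wait — this yields $S^3$, not $\RP^3$; the resolution is that the identification $X(C_n,C_n)=S^3/(\text{effective }\Zo_n)$ must be corrected. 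In fact $2C_n$ has order $2n$ and contains $-1$, so $2C_n\htimes 2C_n = (2C_n\times 2C_n)/\langle(-1,-1)\rangle$ has order $\frac{(2n)^2}{2}=2n^2$, acting on $S^3$ with kernel of order $2$ (the diagonal $-1$ acts trivially on $S^3$ only if... ) — so one must carefully recompute the effective action. The honest effective action, as above, is by $(u,v)\mapsto(\epsilon_n^k u, \epsilon_n^{-k}... )$ — no: conjugation kills the $v$-rotation. Let me restate: the generator of the $\SO(4)$-group acts by $h\mapsto gh g$ with $g$ of order $2n$, giving order-$n$ rotation on $u$ and trivial on $v$; but the element $g\mapsto -1$ of $C_n\htimes C_n$ gives $h\mapsto h$, which is trivial, so it is NOT in the effective group in the bicoset sense — rather $X(C_n,C_n)=\SO(3)/(C_n\times C_n)$ directly, and $\SO(3)\cong\RP^3$, with the two $C_n$-actions being rotation about a common axis from the left and from the right.

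So the cleanest route, and the one I would actually write, is: work downstairs in $\SO(3)\cong\RP^3$ rather than upstairs in $S^3$. Use Euler-angle-type coordinates on $\SO(3)$ adapted to the rotation axis $\ell$ of $C_n$: a generic rotation is $R_\ell(\alpha)\,R_{\ell^\perp}(\beta)\,R_\ell(\gamma)$, and the left and right $C_n$-actions shift $\gamma$ and $\alpha$ respectively by multiples of $2\pi/n$. Quotienting by these two independent circle-subgroup translations of $\alpha$ and $\gamma$ produces a new $\SO(3)$-like parametrization — concretely one shows the quotient map $(\alpha,\beta,\gamma)\mapsto(n\alpha,\beta,n\gamma)$ descends to a homeomorphism $X(C_n,C_n)\xrightarrow{\sim}\SO(3)\cong\RP^3$, checking compatibility on the degenerate loci $\beta=0,\pi$ where the coordinates break down. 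The main obstacle is exactly this last point: verifying that the folded coordinates glue to a well-defined homeomorphism across the singular fibers of the Euler parametrization (where $R_{\ell^\perp}(\beta)$ is trivial or a flip and $\alpha,\gamma$ collapse to a single angle). I expect this to require a short case analysis of stabilizers — matching Table~\ref{tableSubgroupsH}, which records $H_1=\{\pm1\}$ exactly when the other group has a $2$-rotation, consistent with $\pi_1=\Zo_2$ — and a direct check that the resulting cell structure is that of $\RP^3$, i.e. a $3$-ball with antipodal boundary identification.
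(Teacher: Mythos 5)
Your first computation contains the error that sinks the whole argument, and it is worth isolating because the corrected version of that same computation \emph{is} the proof. You compute the action of the single element $h\mapsto\zeta h\zeta$ and conclude that ``the effective group acting on $S^3$ is cyclic of order $n$, generated by $(u,v)\mapsto(\epsilon_n u,v)$.'' That is false: the acting group is all of $2C_n\times 2C_n$, i.e.\ all transformations $h\mapsto\zeta^{\alpha}h\zeta^{-\beta}$, which in the coordinates $h=u+vj$ read $(u,v)\mapsto(\zeta^{\alpha-\beta}u,\;\zeta^{\alpha+\beta}v)$. Already the element $(\zeta,\zeta)$, acting by $h\mapsto\zeta h\zeta^{-1}$, rotates $v$ by $\zeta^2=\epsilon_n$ while fixing $u$; so taking $\alpha=\pm\beta$ one gets the full coordinatewise $\Zo_n\times\Zo_n$ rotation, whose quotient of $S^1\ast S^1$ is again $S^1\ast S^1\cong S^3$ in the folded coordinates $(u^n,v^n)$. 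The element you are missing entirely is the class of $(\zeta,1)$ (left multiplication by $\varepsilon_{2n}$ alone), which acts by $(u,v)\mapsto(\zeta u,\zeta v)$ and hence descends on the folded sphere to $(u^n,v^n)\mapsto(-u^n,-v^n)$ --- a free antipodal involution. That residual involution is exactly what turns $S^3$ into $\RP^3$, and exhibiting it is the content of the paper's proof. Your attempted ``resolution'' of the $S^3$-versus-$\RP^3$ discrepancy goes in the wrong direction: you shrink the effective group further, when in fact it is strictly larger than the cyclic group you allowed.

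The Euler-angle fallback is not a proof as written. The parametrization $R_\ell(\alpha)R_{\ell^\perp}(\beta)R_\ell(\gamma)$ fails to be a coordinate system precisely on the loci $\beta=0,\pi$, and the map $(\alpha,\beta,\gamma)\mapsto(n\alpha,\beta,n\gamma)$ is not obviously induced by any continuous self-map of $\SO(3)$; you defer exactly the verification (behavior at the degenerate fibers, matching of stabilizers, identification of the resulting quotient with a ball with antipodal boundary identification) in which all of the topological content of the statement resides. So the proposal has a genuine gap on both routes. The repair is simply to carry your own quaternionic computation to completion with the full group $2C_n\times 2C_n$ rather than one cyclic subgroup of it: quotient first by the coordinatewise $\Zo_n\times\Zo_n$ to get $S^3$ back, then observe the leftover free $\Zo_2$ acting antipodally.
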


\begin{proof}
As before, let $2C_n\subset S^3$ be the binary extension of $C_n$ and $\varepsilon_{2n}=\cos\frac{2\pi}{2n}+i\sin\frac{2\pi}{2n}\in \Co\subset \Ho$ be the primitive complex root of unity of degree $2n$ considered as the generator of $2C_n$. Encoding a quaternion $h=a+bi+cj+dk$ by a pair $(u,v)\in \Co^2$, where, $u=a+bi$, $v=c+di$, $h=u+vj$, we can describe the two-sided multiplication action as follows:
\begin{equation}\label{eqTwoSidedCyclicAction}
\varepsilon_{2n}^{\alpha}h\varepsilon_{2n}^{\beta}=\varepsilon_{2n}^{\alpha}(u,v)\varepsilon_{2n}^{\beta}=
(\varepsilon_{2n}^{\alpha+\beta}u, \varepsilon_{2n}^{\alpha-\beta}v).
\end{equation}
Let $H_n\subset 2C_n$ be the cyclic subgroup of index $2$ generated by $\varepsilon_{2n}^2$. Then, taking $\alpha=\beta$ or $\alpha=-\beta$ in \eqref{eqTwoSidedCyclicAction}, we see that $H_n\times H_n$ acts on $\Ho=\Co\times\Co$ coordinate-wise. Hence, for the induced action of $H_n\times H_n$ on $S^3=S^1\ast S^1$ we have
\[
(S^1\ast S^1)/(H_n\times H_n)=(S^1/H_n)\ast(S^1/H_n)\cong S^1\ast S^1\cong S^3.
\]
Here $\ast$ is the topological join. The middle homeomorphism constitutes a simple observation that $S^1/H_n\cong S^1$. The residual action of $(2C_n/H_n)\times(2C_n/H_n)\cong \Zt\times\Zt$ on $S^3/(H_n\times H_n)$ has two components: the action of $([\varepsilon_{2n}],[\varepsilon_{2n}])$ on $S^3/(H_n\times H_n)$ is trivial, and the action of $([\varepsilon_{2n}],[1])$ (or $([1],[\varepsilon_{2n}])$) is a free involution on $S^3=S^3/(H_n\times H_n)$, changing sign, according to \eqref{eqTwoSidedCyclicAction}. Hence the total quotient $X(C_n,C_n)=\sit{2C_n}\backslash \jump{S^3}/\sit{2C_n}$ is homeomorphic to $\RP^3$.
\end{proof}

\begin{rem}\label{remCyclCoords}
The proof of Proposition \ref{propCyclicCoords} gives explicit coordinates on the misorientation space $X(C_n,C_n)$. Assume that an orthogonal transformation $A\in \SO(3)$ is given, and the goal is to find its coordinates in the misorientation space $X(C_n,C_n)$. The algorithm runs as follows:
\begin{enumerate}
  \item find a unit quaternion $h=a+bi+cj+dk\in S^3\subset \Ho$, which represents the transformation $A$;
  \item write two complex numbers $u=a+bi$, $v=c+di$, and compute their $n$-th powers $u^n=a'+b'i$, $v^n=c'+d'i$;
  \item write the quaternion $h'=\dfrac{a'+b'i+c'j+d'k}{|a'+b'i+c'j+d'k|}\in S^3$, and consider the corresponding orthogonal transformation $\coord(A)=h'/\{\pm1\}$.
\end{enumerate}
The resulting element $\coord(A)$, written in any convenient form, is the coordinate on $X(C_n,C_n)$ in the sense that $\coord(A)=\coord(\tilde{A})$ if and only if $A=g_1\tilde{A}g_2$ for some $g_1,g_2\in C_n$.
\end{rem}

The argument of Proposition \ref{propCyclicCoords} can be extended to dihedral groups $D_n$ with odd $n$.

\begin{prop}\label{propCyclicCoordsDcaseOdd}
Let $D_n\subset \SO(3)$ be the dihedral rotation group and $n$ be odd. Then $X(D_n,D_n)\cong \RP^3$.
\end{prop}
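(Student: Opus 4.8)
The plan is to mimic the proof of Proposition~\ref{propCyclicCoords} as closely as possible, the key new ingredient being the extra pair of generators coming from the order-two rotations in $D_n$. Recall that the binary extension $2D_n$ is the dicyclic group $\Dic_n$, generated by $\varepsilon_{2n}=\cos\frac{2\pi}{2n}+i\sin\frac{2\pi}{2n}\in\Co\subset\Ho$ of order $2n$ and by $j$, with the relations $j\varepsilon_{2n}j^{-1}=\varepsilon_{2n}^{-1}$ and $j^2=-1$. I would again encode a quaternion $h=u+vj$ by a pair $(u,v)\in\Co^2$, so that the left/right multiplication by powers of $\varepsilon_{2n}$ acts as in~\eqref{eqTwoSidedCyclicAction}, while left multiplication by $j$ sends $(u,v)\mapsto(-\bar v,\bar u)$ and right multiplication by $j$ sends $(u,v)\mapsto(-\bar v,\bar u)$ as well (up to the exact placement of conjugation and signs, to be checked).

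First I would introduce the subgroup $H_n\subseteq 2D_n$ of index~$2$. Since $n$ is odd, the subgroup $\langle\varepsilon_{2n}^2\rangle\cong\Zo_n$ together with $j$ generates a subgroup isomorphic to $\Dic_{n}$'s odd analogue — concretely, I expect $H_n=\langle\varepsilon_n, j\rangle$ has order $2n$ and index $2$ in $2D_n$ (this uses oddness: $-1=\varepsilon_{2n}^n\in\langle\varepsilon_n\rangle$ precisely because $n$ is odd, so $\langle\varepsilon_n\rangle$ already contains $-1$ and has order $2n$... here the book-keeping of orders is exactly the point where oddness enters, so I would be careful). The crucial claim is then: the diagonal-type subgroup of $2D_n\times 2D_n$ corresponding to $H_n\times H_n$ — more precisely, the subgroup generated by the elements that act "coordinate-wise" on $\Co\times\Co$ — acts on $S^3=S^1\ast S^1$ so that the quotient is again $S^3$. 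As in Proposition~\ref{propCyclicCoords}, taking $\alpha=\pm\beta$ makes the $\varepsilon$-part act coordinate-wise; and the pair $(j,j)$ acting by $(u,v)\mapsto(\bar u,\bar v)$ (after composing left and right multiplication by $j$) is again coordinate-wise, being complex conjugation in each factor. So the relevant group acts as a product $K\times K$ on $S^1\ast S^1$ with $K$ a finite subgroup of $O(2)$, and $S^1/K\cong S^1$ forces $(S^1\ast S^1)/(K\times K)\cong S^1\ast S^1\cong S^3$.

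Next I would analyze the residual action. The quotient $(2D_n\times 2D_n)/(\text{the coordinate-wise subgroup})$ should again be $\Zo_2\times\Zo_2$ — generated by the classes of $(\varepsilon_{2n},1)$ and $(1,\varepsilon_{2n})$, since $j$ has already been absorbed into the coordinate-wise part. By~\eqref{eqTwoSidedCyclicAction}, $(\varepsilon_{2n},\varepsilon_{2n})$ acts as $(u,v)\mapsto(\varepsilon_{2n}^2 u, v)$ which, after passing to $S^1/K\cong S^1$, becomes trivial, whereas $(\varepsilon_{2n},1)$ acts as $(u,v)\mapsto(\varepsilon_{2n}u,\varepsilon_{2n}v)$, i.e.\ a \emph{free} involution on $S^3$ that is "$-1$" on the relevant coordinates, hence the quotient is $\RP^3$. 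Putting the two stages together gives $X(D_n,D_n)=\sit{2D_n}\backslash\jump{S^3}/\sit{2D_n}\cong\RP^3$.

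The main obstacle I anticipate is getting the group-theoretic book-keeping exactly right: identifying the correct index-two subgroup $H_n$ of the dicyclic group $2D_n$ whose two-sided action is coordinate-wise, verifying that it really has order $2n$ (this is where the hypothesis that $n$ is odd is genuinely used — for even $n$ the element $-1$ sits differently and one lands in $S^3$ instead, which is exactly Proposition~\ref{propCyclicCoordsDcaseEven}), and confirming that the residual quotient is $\Zo_2\times\Zo_2$ rather than something larger. The geometry ($S^1\ast S^1$ decomposition, $S^1/K\cong S^1$, and the free sign involution on $S^3$) is then a routine repetition of the cyclic case. As an alternative sanity check, the claim is consistent with Table~\ref{tableFundGrps}, which lists $\pi_1(X(D_3,D_3))=\Zo_2$, and with Proposition~\ref{propSameGroup}.
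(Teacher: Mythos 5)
Your overall strategy --- kill a large ``coordinate-wise'' part of $2D_n\times 2D_n$ first and reduce to the cyclic case --- is the right one and is essentially what the paper does, but the group-theoretic bookkeeping you yourself flagged as delicate is exactly where the argument breaks. First, for odd $n$ the subgroup $\langle\varepsilon_n,j\rangle$ is \emph{all} of $2D_n$, not an index-two subgroup: the containment $-1\in\langle\varepsilon_n\rangle$ holds precisely when $n$ is \emph{even} (your parenthetical has it backwards), so for odd $n$ adjoining $j$ forces $-1=j^2$ and hence $\varepsilon_{2n}$ (an element of order $2n$ inside $\langle\varepsilon_n,-1\rangle$, since $\gcd(n,2)=1$) into the subgroup. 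Second, even if one replaces this by ``the subgroup of elements of $2D_n\times 2D_n$ acting coordinate-wise on $\Co\oplus\Co j$'', that subgroup is \emph{not} of the form $K\times K$: the only coordinate-wise occurrence of $j$ is the diagonal $(j,j)$, which acts by $(u,v)\mapsto(\bar u,\bar v)$, and no element acts by, say, $(u,v)\mapsto(\bar u,v)$ (that map has determinant $-1$ on $\Ro^4$ and so cannot be a two-sided quaternion multiplication). Hence the join formula $(S^1\ast S^1)/(K\times K)\cong(S^1/K)\ast(S^1/K)$ does not apply to the step involving $(j,j)$; that quotient is still $S^3$, but for the reason of Proposition~\ref{propBuchRees} (a rotation fixing a circle), not by a product argument.

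Third, and most seriously, the residual analysis fails. The element $(j,1)$ is \emph{not} absorbed into the coordinate-wise part --- it swaps the two $\Co$-factors --- and the residual group is cyclic of order $4$ generated by $[(j,1)]$, with $[(j,1)]^2=[(-1,1)]=[(\varepsilon_{2n},1)]$ (oddness of $n$ enters here); it is not $\Zt\times\Zt$. Moreover, once you have divided by $(u,v)\mapsto(\bar u,\bar v)$, the involution $(u',v')\mapsto(-u',-v')$ that you call free is no longer free on the quotient: it fixes the image of the circle $\{(u',v')\in S^3: u',v'\in i\Ro\}$. So the concluding step ``free involution on $S^3$, hence $\RP^3$'' collapses. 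The paper avoids all of this by taking $H_n=\langle\varepsilon_{2n}^2\rangle$ only (order $n$, containing neither $-1$ nor $j$); then $H_n\times H_n$ genuinely acts as a product of rotation groups on $S^1\ast S^1$, the quotient $2D_n/H_n$ is $\Zo_4$ generated by $[j]$ (again using that $n$ is odd), and the residual action on $S^3/(H_n\times H_n)\cong S^3$ is literally the standard two-sided action of $\{\pm1,\pm j\}\times\{\pm1,\pm j\}$, i.e.\ the case $n=2$ of Proposition~\ref{propCyclicCoords}, which gives $\RP^3$.
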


\begin{proof}
At this time, the binary extension $2D_n\subset S^3$ is generated by the quaternions $\varepsilon_{2n}$ and $j$. Let $H_n$ be the subgroup of $2D_n$ generated by $\varepsilon_{2n}^2$. Then $2D_n/H_n\cong \Zo_4$ (here we used the assumption that $n$ is odd, because for even $n$ the quotient is $\Zt\times\Zt$). As in the proof of Proposition \ref{propCyclicCoords}, the two-sided multiplication action of $H_n\times H_n$ on $S^3$ coincides with the action of $H_n\times H_n$ on $S^1\ast S^1$, which rotates each circle independently, hence $S^3/(H_n\times H_n)\cong S^3$. The residual action of $(2D_n/H_n)\times(2D_n/H_n)\cong C_4\times C_4$ on the sphere $S^3$ coincides with the standard two-sided action, so we may apply Proposition \ref{propCyclicCoords}.
\end{proof}

\begin{rem}\label{remDoddCoords}
The algorithm for finding the coordinates on $X(D_n,D_n)$ for odd $n$ runs as follows. Take a transformation matrix $A\in\SO(3)$, then
\begin{enumerate}
  \item find a unit quaternion $h=a+bi+cj+dk\in S^3\subset \Ho$, which represents the transformation $A$;
  \item write two complex numbers $u=a+bi$, $v=c+di$, and compute their $n$-th powers $u^n=a'+b'i$, $v^n=c'+d'i$;
  \item write another pair of complex numbers $u'=a'+c'i$ and $v'=b'+d'i$, and compute their squares $(u')^2=a''+c''i$, $(v')^2=b''+d''i$;
  \item write the quaternion $h''=\dfrac{a''+b''i+c''j+d''k}{|a''+b''i+c''j+d''k|}\in S^3$, and consider the corresponding orthogonal transformation $\coord(A)=h''/\{\pm1\}$.
\end{enumerate}
\end{rem}

Next, we expand the argument to dihedral groups $D_n$ with even $n$.

\begin{prop}\label{propCyclicCoordsDcaseEven}
Let $D_n\subset \SO(3)$ be the dihedral rotation group and $n$ be even. Then $X(D_n,D_n)\cong S^3$.
\end{prop}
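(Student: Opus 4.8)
The plan is to imitate the two preceding propositions: quotient $S^3$ by a large ``coordinatewise'' normal subgroup through a fold so as to reach $S^3$ again, and then dispose of a small residual finite group using the Poincar\'e‑free Proposition~\ref{propMikhIndex2}. Following Construction~\ref{conSO4}, write $\Ho\cong\Co^2$ via $h=u+vj$ and realize $X(D_n,D_n)=S^3/(2D_n\htimes 2D_n)$. Put $G_n:=2D_n\htimes 2D_n\subset\SO(4)$, which as a transformation group of $S^3\subset\Co^2$ is generated, according to~\eqref{eqTwoSidedCyclicAction}, by the two-sided multiplications $L_x\colon(u,v)\mapsto(\varepsilon_{2n}u,\varepsilon_{2n}v)$, $R_x\colon(u,v)\mapsto(\varepsilon_{2n}^{-1}u,\varepsilon_{2n}v)$, $L_y\colon(u,v)\mapsto(-\overline v,\overline u)$, $R_y\colon(u,v)\mapsto(v,-u)$ by the generators $\varepsilon_{2n},j$ of $2D_n$.

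First I would single out the subgroup $N\triangleleft G_n$ generated by $(u,v)\mapsto(\varepsilon_n u,v)$ and $(u,v)\mapsto(u,\varepsilon_n v)$, where $\varepsilon_n=\varepsilon_{2n}^2$; it lies in $G_n$ because these maps equal $L_xR_x^{-1}$ and $L_xR_x$, and it is normal because $L_y,R_y$ conjugate it to itself while $L_x,R_x$ centralize it. Now $N\cong\Zo_n\times\Zo_n$ acts coordinatewise on $S^3=S^1_u\ast S^1_v$, so exactly as in Proposition~\ref{propCyclicCoords},
\[
S^3/N=(S^1/\Zo_n)\ast(S^1/\Zo_n)\cong S^1\ast S^1\cong S^3,
\]
with the homeomorphism realized by the fold $(u,v)\mapsto(U,V):=(u^n,v^n)$ (renormalized to lie on $S^3$).

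Next I would compute the residual action. Since $L_x^2,R_x^2,R_y^2$ and — here is where $n$ even enters — also $L_y^2=-\id=\big((u,v)\mapsto(\varepsilon_n^{n/2}u,\varepsilon_n^{n/2}v)\big)$ all lie in $N$, and since $L_xR_x\in N$, a short computation gives $G_n/N\cong\Zo_2^3$, generated by $\iota:=[L_x]$, $\tau:=[L_y]$, $\rho:=[R_y]$; pushing these through the $n$-th power fold and using $(-1)^n=1$ yields
\[
\iota\colon(U,V)\mapsto(-U,-V),\qquad \tau\colon(U,V)\mapsto(\overline V,\overline U),\qquad \rho\colon(U,V)\mapsto(V,U)
\]
on the new $S^3\subset\Co_U\times\Co_V$. (For odd $n$ the factors $(-1)^n$ survive, $L_y^2=-\id\notin N$, the quotient is $\Zo_4\times\Zo_2$, and one recovers $\RP^3$, consistent with Proposition~\ref{propCyclicCoordsDcaseOdd}.) In the orthonormal frame $e_1=\tfrac1{\sqrt2}(1,1)$, $e_2=\tfrac1{\sqrt2}(i,i)$, $e_3=\tfrac1{\sqrt2}(1,-1)$, $e_4=\tfrac1{\sqrt2}(i,-i)$ of $\Co^2\cong\Ro^4$ the three maps become diagonal sign changes $\rho=\diag(1,1,-1,-1)$, $\tau=\diag(1,-1,-1,1)$, $\iota=\diag(-1,-1,-1,-1)$, whose span is precisely the group of diagonal sign changes with an even number of $-1$'s. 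That group is $\Gamma^+$ for the Euclidean reflection group $\Gamma=\{\diag(\pm1,\pm1,\pm1,\pm1)\}$ (generated by the four coordinate hyperplane reflections), so Proposition~\ref{propMikhIndex2} gives $S^3/\langle\iota,\tau,\rho\rangle\cong S^3$. Hence $X(D_n,D_n)=(S^3/N)/(G_n/N)\cong S^3$; composing the folds with the change to the $e_i$-frame (and doubling the positive spherical orthant) produces explicit coordinates, and as a check $\pi_1(X(D_n,D_n))=1$ drops out as well, cf.\ Corollary~\ref{corSpherePseudoref}.

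The main obstacle is the bookkeeping in the middle two steps: checking that $N$ is normal with quotient exactly $\Zo_2^3$, and that the induced generators on the folded sphere are exactly $\iota,\tau,\rho$ — the single point where the parity of $n$ intervenes, and which is what flips the answer from $\RP^3$ to $S^3$ — and then recognizing $\langle\iota,\tau,\rho\rangle$ as $\Gamma^+$ so that the hands-on Proposition~\ref{propMikhIndex2} applies rather than Proposition~\ref{propMikh}.
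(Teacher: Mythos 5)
Your proof is correct and follows essentially the same route as the paper: fold $S^3=S^1\ast S^1$ by the coordinatewise cyclic rotations to get $S^3$ again via $(u,v)\mapsto(u^n,v^n)$, observe that the parity of $n$ makes the residual group an elementary abelian $\Zo_2^3$ acting (in the rotated real frame $u'\pm v'$) by sign changes on an even number of coordinates, and finish with Proposition~\ref{propMikhIndex2} rather than Poincar\'e. The only difference is that you take the fold subgroup $N$ to be the full coordinatewise rotation group $\Zo_n\times\Zo_n$ inside the effective transformation group $2D_n\htimes 2D_n$ (rather than the image of $H_n\times H_n$ as in the paper), which is a harmless --- in fact slightly cleaner --- piece of bookkeeping, and your generators $\tau,\rho$ differ from the paper's by the product $\tau\rho=(U,V)\mapsto(\bar U,\bar V)$, so they generate the same $\Gamma^+$.
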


\begin{proof}
As in the previous case, the subgroup $2D_n\subset S^3$ is generated by $\varepsilon_{2n}$ and $j$, and we consider the subgroup $C_n\subset 2D_n$ generated by $\varepsilon_{2n}^2$. Again, we have the homeomorphism $S^3/(H_n\times H_n)\to S^3$, given by $[(u,v)]\mapsto (u'=u^n,v'=v^n)$. The multiplication by $[\varepsilon_{2n}]\in 2D_n/H_n$ from either side of $S^3/(H_n\times H_n)$ acts by $(u',v')\mapsto (-u',v')$ in the new coordinates. Next, we have $(u,v)j=(-v,u)$ and $j(u,v)=(-\bar{v},\bar{u})$. Then, passing to $n$-th powers, and recalling that $n$ is even, we get the left and right actions of the residual element $[j]\in 2D_n/H_n$ in the new coordinates $(u',v')$:
\[
[j](u',v')=(v',u'),\quad (u',v')[j]=(\bar{u}',\bar{v}').
\]
The residual action of $(2D_n\times 2D_n)/(H_n\times H_n)$ on the sphere with coordinates $(u',v')$ has noneffective kernel $\langle(\varepsilon_{2n},\varepsilon_{2n})\rangle$, and the effective part of the action can either switch the coordinates: $(u',v')\leftrightarrow (v',u')$, or conjugate them simultaneously: $(u',v')\leftrightarrow (\bar{u}',\bar{v}')$, or simultaneously change their signs: $(u',v')\leftrightarrow (-u',-v')$. Let us introduce the real coordinates $\tilde{a},\tilde{b},\tilde{c},\tilde{d}$ such that $\tilde{a}+\tilde{b}i=u'+v'$, $\tilde{c}+\tilde{d}i=u'-v'$. In coordinates $(\tilde{a},\tilde{b},\tilde{c},\tilde{d})$ the residual action of $\Zt^3$ is written in a very simple form: it changes signs of even number of coordinates, i.e.
\begin{equation}\label{eqAlternAction}
(\tilde{a},\tilde{b},\tilde{c},\tilde{d})\mapsto (\nu_a \tilde{a},\nu_b \tilde{b},\nu_c \tilde{c},\nu_d \tilde{d}), \quad \nu_a,\nu_b,\nu_c,\nu_d\in\{+1,-1\}, \mbox{ and } \nu_a\nu_b\nu_c\nu_d=+1.
\end{equation}
The quotient space of this action is homeomorphic to a sphere $S^3$ according to Proposition~\ref{propMikhIndex2}.
\end{proof}

\begin{rem}
Note that the last step in the proof of Proposition~\ref{propCyclicCoordsDcaseEven} makes it impossible to write down 4 algebraical coordinates on the space $X(D_{2k},D_{2k})$ as we did in Remarks~\ref{remCyclCoords} and~\ref{remDoddCoords}. Indeed, even in the case of $X(D_2,D_2)$ the ring of invariants has 6 generators subject to some relations, see Subsection~\ref{subsecInvar}. However, the homeomorphism in the last step of the proof can be treated by the following precise formulae. The action of $\Zt^4$ on $S^3$ given by
\[
(\tilde{a},\tilde{b},\tilde{c},\tilde{d})\mapsto (\nu_a \tilde{a},\nu_b \tilde{b},\nu_c \tilde{c},\nu_d \tilde{d}), \quad \nu_a,\nu_b,\nu_c,\nu_d\in\{+1,-1\}
\]
has fundamental domain $P_+=\{a^2+\cdots+d^2=1,a,b,c,d\geqslant 0\}$ which is obviously homeomorphic to $D^3$. The quotient of the smaller group action~\eqref{eqAlternAction} is obtained by gluing two copies of $P_+$ along the boundary, hence the quotient is a sphere (Proposition~\ref{propMikhIndex2} is proved using this idea). Hence, one has to specify the homeomorphism $\varphi\colon P_+\to D^3$ to get the coordinates on $S^3/\Zt^3\cong S^3$.
%
%
\end{rem}

\subsection{Algebraical coordinates given by invariants}

Finally, we outline how the generators of the ring of invariants can be used to construct coordinates on misorientation spaces in some cases.

\begin{con}\label{conCoordInvar}
Assume that the group $G=2G_1\htimes 2G_2\subset\SO(4)$ is generated by rotations (so that $X(G_1,G_2)\cong S^3$ according to Corollary~\ref{corSpherePseudoref}). Assume, moreover, that the algebra of invariants $\Ro[V]^G$, where $V=\Ro^4$, is a free algebra:
\[
\Ro[V]^G\cong \Ro[f_1,f_2,f_3,f_4].
\]
Note that this additional condition does not hold automatically for the groups generated by rotations, see Subsection~\ref{subsecInvar}. Let $n_i=\deg f_i$. The map $V\to V$ given by
\[
(a,b,c,d)\mapsto (f_1(a,b,c,d),\ldots,f_4(a,b,c,d))
\]
is proper and induces the homeomorphism $F\colon V/G\to V'$ to a semialgebraic subset $V'\subseteq V$, see~\cite{ProcShw}. The subset $V'$ contains a neighborhood of the origin (otherwise $0$ would lie on the boundary of $V'$, but the boundary is empty since $V'\cong V$).

Now, if $x=(a,b,c,d)\in S^3$, we can consider the ray $\Ro_+x=\{tx\mid t>0\}$. We have
\[
F(\Ro_+x)=\{(t^{n_1}f_1(a,b,c,d),\ldots,t^{n_4}f_4(a,b,c,d))\mid t>0\}
\]
The set $F(\Ro_+x)$ intersects $S^3$ in a unique point $t_0$. Indeed, the positive solution to the equation
\[
\|F(tx)\|^2=t^{2n_1}f_1^2(a,b,c,d)+\cdots+t^{2n_4}f_4^2(a,b,c,d)=1
\]
is unique since the function on the left is monotone increasing from $0$ to $\infty$ (the coefficients are nonnegative). The solution $t_0$ depends algebraically on $x$. Finally, we have a homeomorphism $S^3/G\to S^3$, given by $x\mapsto F(t_0x)$, which provides coordinates on the space $X(G_1,G_2)=S^3/G\cong S^3$.
\end{con}

\section{Several instances of $X(D_2,D_2)$}\label{secD2detailed}

In this section we recall several ways of understanding that $X(D_2,D_2)$ is homeomorphic to a sphere, which are independent from calculations of Section~\ref{secMisorientationsMain} (based on Poincar\'{e} conjecture) and Proposition~\ref{propCyclicCoordsDcaseEven}. There holds
\[
X(D_2,D_2)=\sit{\Zt^3}\backslash \jump{O(3)}/\sit{\Zt^3}
\]
where $\Zt^3$ is the subgroup of $O(3)$ generated by reflections in three mutually orthogonal planes.

\subsection{Real moment map}
First note that the one-sided quotient $O(3)/\Zt^3$ is the manifold $F_3(\Ro)$ of complete flags in $\Ro^3$ since the latter is the homogeneous space of the Lie group $O(3)$ with the isotropy group $\Zt^3$. On the other hand, if we fix three real numbers $\lambda=(\lambda_1,\lambda_2,\lambda_3)$ with $\lambda_1<\lambda_2<\lambda_3$, then $O(3)/\Zt^3$ is isomorphic to the space
\[
X_{3,\lambda}=\{A\in \Mat_3(\Ro)\mid A^\top=A, A\mbox{ has spectrum }\lambda\}
\]
of symmetric $3\times 3$-matrices with the spectrum $\{\lambda_1,\lambda_2,\lambda_3\}$. The homeomorphism is given by
\[
[Q]\mapsto A=Q\Lambda Q^{\top}=Q\Lambda Q^{-1},
\]
where $Q\in O(3)$, and $[Q]$ is its equivalence class modulo right action of the subgroup $\Zt^3=\{\diag(\pm1,\pm1,\pm1)\}$ of diagonal matrices, and $\Lambda=\diag(\lambda_1,\lambda_2,\lambda_3)$. Let
\begin{equation}\label{eqSymmetricMatrix}
A=A(a_1,a_2,a_3,b_1,b_2,b_3)=\begin{pmatrix}
    a_1 & b_1 & b_3 \\
    b_1 & a_2 & b_2 \\
    b_3 & b_2 & a_3
  \end{pmatrix}
\end{equation}
be a matrix from $X_{3,\lambda}$. The remaining action of $\Zt^3$ on $O(3)/\Zt$ is easily described in terms of matrices: $\Zt^3$ acts on $X_{3,\lambda}$ by conjugation at diagonal orthogonal matrices. In coordinates, we have
\[
(\epsilon_1,\epsilon_2,\epsilon_3)A(a_1,a_2,a_3,b_1,b_2,b_3)=A(a_1,a_2,a_3,\epsilon_1\epsilon_2b_1,\epsilon_2\epsilon_3b_2, \epsilon_3\epsilon_1b_3)
\]
We have $X_{3,\lambda}/\Zt^3\cong \sit{\Zt^3}\backslash \jump{O(3)}/\sit{\Zt^3}=X(D_2,D_2)$.

Consider the map $\mu\colon X_{3,\lambda}\to \Ro^3$ which picks the matrix' diagonal: $\mu(A)=(a_1,a_2,a_3)$. Since the $\Zt^3$ action on $X_{3,\lambda}$ preserves the diagonal, we have an induced map $\widetilde{\mu}\colon X_{3,\lambda}/\Zt^3\to \Ro^3$. The famous Horn--Schur theorem~\cite{Horn} asserts that the image $\mu(X_{3,\lambda})$ coincides with the 2-dimensional permutohedron, i.e. the hexagon
\[
\Pe^2_\lambda=\conv\{(\lambda_{\sigma(1)},\lambda_{\sigma(2)},\lambda_{\sigma(3)})\mid \sigma\in\Sigma_3\},
\]
where $\Sigma_3$ is the permutation group. Alternatively, the hexagon $\Pe^2_\lambda$ can be written by inequalities
\[
\Pe^2_\lambda=\{(a_1,a_2,a_3)\in\Ro^3\mid a_1+a_2+a_3=\lambda_1+\lambda_2+\lambda_3; \lambda_1\leqslant a_i\leqslant \lambda_3\}.
\]
If a diagonal element $a_i$ of a matrix $A$ equals the maximal eigenvalue $\lambda_3$, then all off-diagonal elements of $i$-th column and $i$-th row vanish. If, for example, $a_1=\lambda_3$, then $b_1=b_3=0$ and the block $\begin{pmatrix}a_2 & b_2 \\ b_2 & a_3 \end{pmatrix}$ has eigenvalues $\lambda_1,\lambda_2$. With $a_1=\lambda_3$ and $a_i$'s and $\lambda_i$'s fixed, the value $b_2$ is defined uniquely up to sign. This means that in this case the full preimage $\mu^{-1}((a_1,a_2,a_3))$ consists of two points, forming a $\Zt^3$-orbit of $X_{3,\lambda}$. Hence $\widetilde{\mu}^{-1}((a_1,a_2,a_3))$ is a single point. Similar arguments apply to the case when any other $a_i$ equals either the maximal eigenvalue $\lambda_3$, or the minimal eigenvalue $\lambda_1$. Therefore, for any point $a\in\dd\Pe^2_{\lambda}$ on the boundary of a hexagon, the full preimage $\widetilde{\mu}^{-1}(a)$ consists of a single point.

On the other hand, for the interior point $a=(a_1,a_2,a_3)\in\relint(\Pe_{\lambda}^2)$ of a hexagon, the full preimage $\tilde{\mu}^{-1}(a)$ is homeomorphic to the circle $S^1$. Indeed, the equations on off-diagonal terms of the matrix reduce to an equation of compact real algebraic curve of degree~3, which is homeomorphic to the circle. Hence the whole space $X(D_2,D_2)\cong X_{3,\lambda}/\Zt^3$ is obtained by taking the product $\Pe^2_\lambda\times S^1$, and pinching the circles over the boundary $\dd\Pe^2_\lambda$. This results in the sphere $S^3$.

\begin{rem}
Similar argument was applied by Buchstaber--Terzic in \cite{BT2} in the complex case. They proved the homeomorphism
\begin{equation}\label{eqBuchTerz}
\sit{T^3}\backslash \jump{U(3)}/\sit{T^3}=F_3(\Co)/T^3\cong S^4,
\end{equation}
where $F_3(\Co)$ is the manifold of full complex flags in $\Co^3$. Later, Karhson and Tolman~\cite{KTmain} extended this argument to more general Hamiltonian torus actions of complexity one in general position.
\end{rem}

\subsection{Periodic Toda lattice}

The homeomorphism
\[
\sit{\Zt^3}\backslash \jump{O(3)}/\sit{\Zt^3}\cong S^3
\]
directly follows from the work of van Moerbeke~\cite{VanM}, although this homeomorphism was not stated explicitly in his paper. We review the necessary constructions and provide the missing details in this subsection.

There is a classical dynamical system on the space $X_{3,\lambda}$ of isospectral symmetric matrices. For a matrix $A$ as in \eqref{eqSymmetricMatrix} consider the skew-symmetric matrix
\begin{equation}\label{eqSymmetricMatrix}
P=P(A)=\begin{pmatrix}
    0 & b_1 & -b_3 \\
    -b_1 & 0 & b_2 \\
    b_3 & -b_2 & 0
  \end{pmatrix}
\end{equation}
and define the dynamical system in the form of the Lax pair:
\begin{equation}\label{eqLax}
\dot{A}=[A,P(A)].
\end{equation}
This dynamical system is called \emph{the flow of the periodic Toda lattice}, or shortly, \emph{the Toda flow}. It is not difficult to prove, that dynamical systems written in the form of a Lax pair, preserve the spectrum. Hence, in particular, all solutions to~\eqref{eqLax} lie on the isospectral set $X_{3,\lambda}$ of symmetric matrices. The following facts have straightforward proofs
\begin{itemize}
  \item The flow is $\Zt^3$-invariant.
  \item The quantity $b_1b_2b_3$ is preserved by the Toda flow.
  \item The quantity $b_1b_2b_3$ is preserved by the $\Zt^3$-action.
\end{itemize}
Hence we have the induced flow on the quotient $X_{3,\lambda}/\Zt^3\cong X(D_2,D_2)$ (the flow on the orbifold is the flow, which is smooth on each smooth stratum of the orbifold). Moreover, we can consider the map $p\colon X_{3,\lambda}\to \Ro$, $p([A])=b_1b_2b_3$, as well as the induced map $\widetilde{p}\colon X_{3,\lambda}/\Zt^3\to \Ro$. Van Moerbeke \cite{VanM} proved that the image $p(X_{3,\lambda})$ is a closed interval $[-m_1,m_2]$, $m_1,m_2>0$, and, for a point $r\in [-m_1,m_2]$, the full preimage is homeomorphic to
\begin{equation}\label{eqPreimagesOfProduct}
\widetilde{p}^{-1}(r)\cong\begin{cases}
            \mbox{a torus }T^2, & \mbox{if } r\neq0,-m_1,m_2 \\
            \mbox{a circle }T^2/T^1_1, & \mbox{if } r=-m_1 \\
            \mbox{a circle }T^2/T^1_2, & \mbox{if } r=m_2 \\
            \mbox{a torus} T^2 \mbox{ made of 3 hexagons}, & \mbox{if } r=0,
\end{cases}
\end{equation}
where $T^2=T^1_1\times T^1_2$ is a coordinate splitting. The Toda flow is known to be a Liouville integrable system (see, e.g. \cite{VanM,KrichOld}). All tori appearing in~\eqref{eqPreimagesOfProduct}, except for $r=0$ are the Liouville--Arnold tori of the Toda flow. The torus $\widetilde{p}^{-1}(r)$ is exceptional: the periodic Toda flow on the set $\{A\in X_{3,\lambda}\mid b_1b_2b_3=0\}$ degenerates to an open Toda flow, and the trajectories at this set exhibit nice asymptotic and combinatorial properties, studied in~\cite{Tomei}. The quotient $\{A\in X_{3,\lambda}\mid b_1b_2b_3=0\}/\Zt^3$ of the exceptional set is subdivided into 3 natural strata: $\{b_1=0\}$, $\{b_2=0\}$, and $\{b_3=0\}$. Each of these strata is a hexagon and they are glued together as shown on Fig.~\ref{figHexes}.

\begin{figure}[h]
\begin{center}
\includegraphics[scale=0.2]{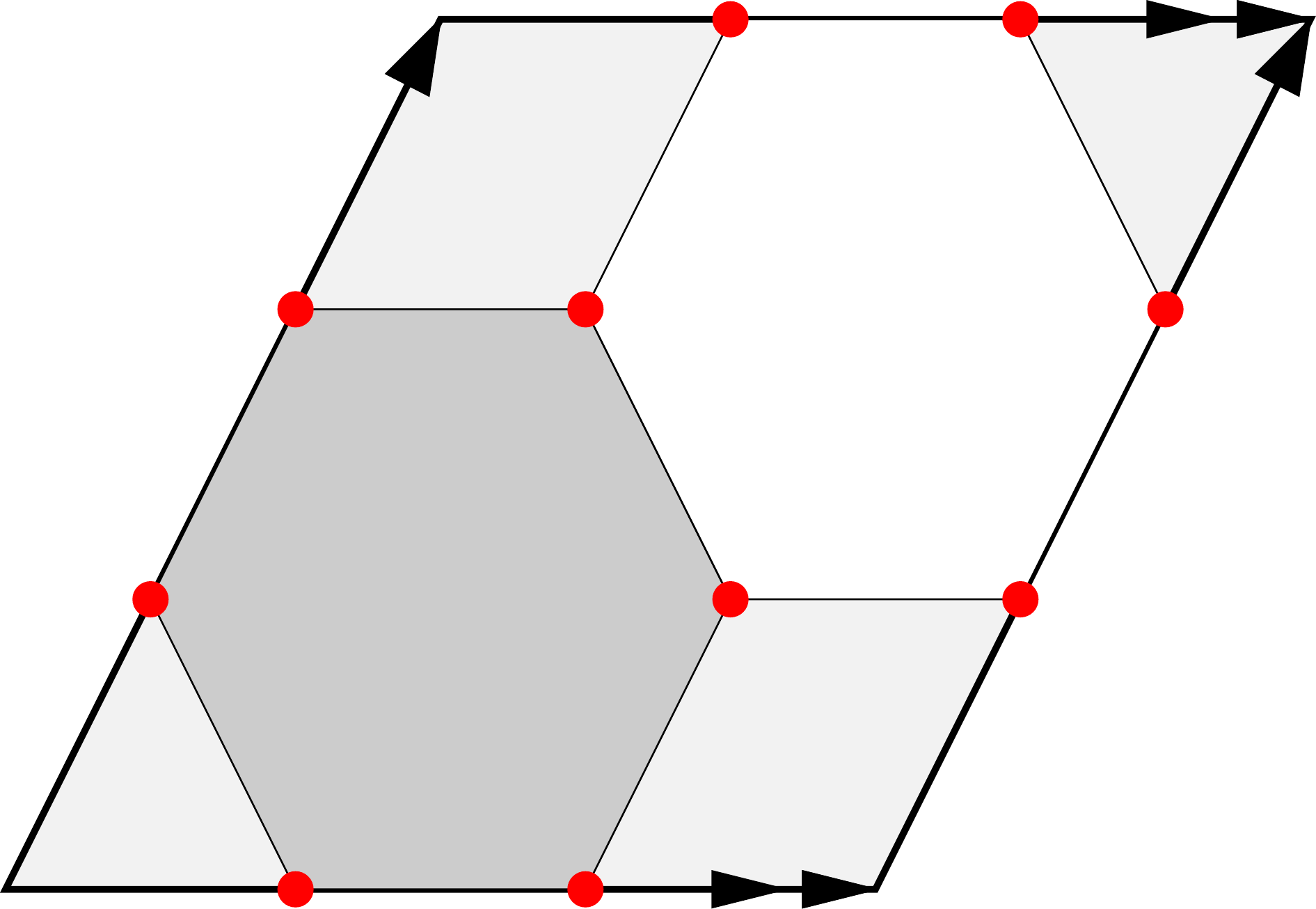}
\end{center}
\caption{The subdivision of $\widetilde{p}^{-1}(0)/\Zt^3\cong T^2$ into strata $\{b_1=0\}$, $\{b_2=0\}$, and $\{b_3=0\}$}\label{figHexes}
\end{figure}

The detailed study of both open and periodic Toda flow from topological point of view was done by the first author in~\cite{AyzMatr}. In that paper we concentrated on the complex case of general periodic tridiagonal matrices, in particular, we reproved the homeomorphism \eqref{eqBuchTerz}.

The real version is completely similar. Here we are interested in the case of real $(3\times 3)$-matrices. The description \eqref{eqPreimagesOfProduct} implies that $\widetilde{p}^{-1}([0,m_2])$ is a solid torus $D^2\times S^1$, and $\widetilde{p}^{-1}([-m_1,0])$ is a solid torus $S^1\times D^2$. Hence $X_{3,\lambda}/\Zt^3\cong X(D_2,D_2)$ is obtained from patching two solid tori along the boundary. However, since distinct circles $T^1_1$ and $T^1_2$ are collapsed over $-m_1$ and $m_2$, the parallel of the first solid torus is patched to the meridian of the second, and vice versa. Hence the result is a $3$-sphere patched of two solid tori (the one corresponding to the nonnegative products $b_1b_2b_3$ and another corresponds to nonpositive products $b_1b_2b_3$).

\subsection{The argument with the ring of invariants}\label{subsecInvar}

The space $X(D_2,D_2)$ is the quotient of two-sided action of $Q_8=\{\pm1,\pm i,\pm j,\pm k\}$ on the sphere $S^3$ of unit quaternions. This particular action was studied in details in~\cite[Thm.3.1]{Mikh}. The action of $G=Q_8\times Q_8$ on $\Ho=\{a+bi+cj+dk\}\cong V= \Ro^4$ is given by Klein four-group action on $(a,b,c,d)$ and sign-changes of even number of coordinates. This representation is generated by rotations. However, the straightforward analogue of Chevalley--Shephard--Todd theorem does not hold. The algebra of invariants $\Ro[V]^G$ is not free: it is generated by the polynomials
\begin{align}\label{eqInvariants}
  f_0&=a^2+b^2+c^2+d^2 & f_1&=a^2b^2+c^2d^2 \nonumber\\
  f_2&=a^2c^2+b^2d^2 & f_3&=a^2d^2+b^2c^2 \\
  f_4&=abcd & f_5&=a^2b^2c^2+a^2b^2d^2+a^2c^2d^2+b^2c^2d^2.\nonumber
\end{align}
subject to relations
\begin{equation}\label{eqRelations}
f_1f_2+f_2f_3+f_1f_3+4f_4^2=f_0f_5;\qquad f_1f_2f_3=f_4^2(f_0^2-4(f_1+f_2+f_3))+f_5^2.
\end{equation}
However, Mikhailova used~\eqref{eqInvariants} and~\eqref{eqRelations} to construct the coordinates on $V/G$, and, in particular, to prove the homeomorphism $V/G\cong V$ stated in her general theorem. The coordinates can be restricted to $S^3\subset V=\Ho$.

This observation allows to prove Proposition~\ref{propMikh} for $\dim V=4$, and general discrete group generated by rotations, which explains the importance of the case $G_1=G_2=D_2$.

\section*{Acknowledgements}

The authors thank Victor Buchstaber for his valuable comments, especially for his observation about the structure of multi-valued topological groups on misorientation spaces $X(G,G)$. We highly appreciate the discussion with Ernest Vinberg who helped us find the relevant sources concerning invariant theory of finite group actions.

\end{document}